\theoremstyle{plain}
\newtheorem{theorem}{Theorem}[section]
\newtheorem{lemma}[theorem]{Lemma}
\newtheorem{corollary}[theorem]{Corollary}
\newtheorem{proposition}[theorem]{Proposition}
\theoremstyle{definition}
\newtheorem{definition}[theorem]{Definition}
\newtheorem{definition-theorem}[theorem]{Definition-Theorem}
\theoremstyle{remark}
\newtheorem{remark}[theorem]{Remark}
\def\Aut{\mathrm{Aut}}
\def\vol{\mathrm{vol}}
\def\Hilb{\mathrm{Hilb}}
\def\FS{\mathrm{FS}}
\def\Lie{\mathrm{Lie}}
\def\cF{\mathcal{F}}
\def\cO{\mathcal{O}}
\def\cG{\mathcal{G}}
\def\cH{\mathcal{H}}
\def\cB{\mathcal{B}}
\def\del{\partial}
\def\delb{\overline\partial}
\def\gl{\mathfrak{gl}}
\def\su{\mathfrak{su}}
\def\R{\mathbb{R}}
\def\N{\mathbb{N}}
\def\P{\mathbb{P}}
\def\L{\mathbb{L}}
\def\C{\mathbb{C}}
\def\bs{{\bf s}}
\def\uS{\underline{S}}
\def\Om{\Omega}
\def\om{\omega}
\def\ep{\varepsilon}
\def\>{\rangle}
\def\<{\langle}
\def\>{\rangle}
\begin{document}

\title[Moment map for balanced metrics on extremal K\"ahler manifolds]
{A moment map picture of relative balanced metrics on extremal K\"ahler manifolds}
\author[Y. Sano]{Yuji Sano}
\author[C. Tipler]{Carl Tipler}
\address{Department of Applied Mathematics
    Fukuoka University
    8-19-1 Nanakuma, Jonan-ku, Fukuoka 814-0180, JAPAN;
D\'epartement de math\'ematiques,
Universit\'e de Bretagne Occidentale, 6, avenue Victor Le Gorgeu, 29238 Brest Cedex 3 France}
\email{sanoyuji@fukuoka-u.ac.jp ; carl.tipler@univ-brest.fr}

\date{\today}

\begin{abstract}
We give a moment map interpretation of some relatively balanced metrics.
As an application, we extend a result of S.\,K.\,Donaldson on constant scalar curvature K\"ahler metrics to the case of extremal metrics. Namely, we show
that a given extremal metric is the limit of some specific relatively balanced metrics. As a corollary, we recover uniqueness and splitting results for extremal metrics in the polarized case.
\end{abstract}

\maketitle

\section{Introduction}
In \cite{Don01}, Donaldson gave a general framework to study some specific Fubini-Study metrics called \textit{balanced} metrics on a polarized manifold.
It is a finite dimensional counterpart of the moment map interpretation of constant scalar curvature K\"ahler (cscK, for short) metrics by Fujiki \cite{fujiki} and Donaldson \cite{Don97}.
Donaldson proved that a given cscK metric is the limit of balanced metrics if the automorphism group of the polarized manifold is discrete.
In this paper, we extend this framework and its applications to the case of extremal metrics by using some relatively balanced metrics introduced in the authors' previous work \cite{st}.

Let $(X, \omega)$ be an $n$-dimensional K\"ahler manifold.
A K\"ahler metric is called \textit{extremal} in the sense of Calabi \cite{c1} if and only if it is a critical point of the functional
$$
	\omega \mapsto  \int_{X} (S(\omega) - \uS)^{2} d\mu_{ \omega}
$$
defined over the space of K\"ahler metrics in a given K\"ahler class, where $S( \omega)$ is the scalar curvature of $\omega$, $d\mu_{ \omega}$ is the volume form $\omega^{n}/n!$ with respect to $\omega$ and $\uS$
is the average of the scalar curvature.
These metrics are natural generalizations of cscK metrics in the presence of holomorphic vector fields.

From now on, we consider the case where  $(X,L)$ is a polarized manifold, i.e., $L$ is an ample line bundle on $X$.
For an Hermitian metric $h$ on $L$, let us denote $-i \partial\bar{\partial}\log h$ by  $\omega_{h}$.
Then, the metric $h$ induces an inner product $\|\cdot\|_{\Hilb_{k}(h)}$ on $V_{k}=H^{0}(X, L^{\otimes k})$ defined by
$$
	\vert\vert s\vert\vert ^2_{\Hilb_k(h)}=\int_X \vert s \vert_{h^k}^2 d\mu_h,
$$
where $d\mu_{h}$ is the volume form with respect to $\omega_{h}$.
Taking an orthonormal basis $\mathbf{s}= \{s_{\alpha}\}_{\alpha=1}^{N_{k}}$ of $V_{k}$ with respect to $\Hilb_k(h)$,  $X$ can be embedded into $ \C\P^{N_k-1}$ for $k$ large enough.
An Hermitian metric $h$ (or its associated K\"ahler form $\omega_{h}$) is called  \textit{ $k^{th}$ balanced} if and only if the pulled-back Fubini-Study metric
$$
	\omega_{\FS_{k} \circ \Hilb_{k}(h)}=\frac{1}{k}i \partial\bar{\partial} \log \bigg(\frac{1}{N_k}  \sum_{\alpha} \vert s_{\alpha}\vert^2\bigg)
	\in 2 \pi c_{1}(L)
$$
is equal to $\omega_{h}$.
In \cite{Don01}, Donaldson proved that if $\Aut(X,L)$ is discrete,
and if $(X,L)$ admits a cscK metric $\omega_{csc} \in 2\pi c_{1}(L)$, then there exists for each $k \gg 0$
a unique $k^{th}$ balanced metric $\omega(k) \in 2\pi c_{1}(L)$. Moreover, the sequence $(\omega(k))_{k\gg0}$ converges to $\omega_{csc}$ in $C^{ \infty}$-sense.

Let us drop the discreteness assumption.
An extremal metric can be seen as a self-similar solution to the Calabi flow:
$$
	\frac{\partial \varphi_{t}}{\partial t} = S (\omega_{t})- \uS, \,\,
	\omega_{t}= \omega_{0} + i \partial\bar{\partial} \varphi_{t}.
$$
The quantization of this flow is an iterative process, called $T_{k}$-iteration \cite{Don05}:
$$
	T_k : \omega \mapsto \omega_{\FS_{k}\circ \Hilb_{k}(\omega)},
$$
whose fixed points are balanced metrics. We then introduce self-similar solutions for the $T_k$-iteration process.
Replacing $L$ by a sufficiently large tensor power if necessary, we have a group representation
$$
	\rho_{k}: \mathrm{Aut}_{0}(X,L) \to  \mathrm{SL}(V_{k}).
$$
Fix a maximal torus $T$ in $\mathrm{Aut}_{0}(X,L)$, its complexification $T^{c}$, and denote the image of $T^{c}$ under $\rho_{k}$ by $T^{c}_{k}$.
As introduced in \cite{st}, we call $h$ (or $\omega_{h}$) \textit{$k^{th}$ $\sigma_{k}$-balanced } if and only if there exists $\sigma_{k} \in T^{c}_{k}$ such that
\begin{equation}\label{eq:sigmabalanceddefinition}
	\omega_{\FS_{k} \circ \Hilb_{k}(h)} = \sigma_{k}^{*}(\omega_{h}).
\end{equation}
It is a natural specific example of relative balanced metric as discussed in \cite{ma040} (see Remark \ref{rem:sigma balanced is relative balanced}).
Then, the main result of this paper is as follows:
\begin{theorem}\label{thm:A}
 Let $(X,L)$ be a polarized K\"ahler manifold with $\om_{ex}\in 2\pi c_1(L)$ extremal and let $T$ be the identity component of the isometry group of $\omega_{ex}$.
 Then there exists $k_0\in \N$ such that for all
 $k\geq k_0$, $(X,L^k)$ admits a $k^{th}$ $\sigma_{k}$-balanced metric $\omega(k)$ for some $\sigma_{k} \in T^{c}_{k}$. These metrics $(\omega(k))_{k\geq k_0}$ converge to the initial metric $\om_{ex}$ in $C^{ \infty}$-sense.
\end{theorem}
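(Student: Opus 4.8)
The plan is to characterize $\sigma_k$-balanced metrics as relative zeros of a moment map, and then to deform $\om_{ex}$ into an exact solution by a quantitative inverse function argument carried out modulo the torus $T^c_k$, in the spirit of \cite{Don01,Don05}. First I would make precise the finite-dimensional moment map picture behind \eqref{eq:sigmabalanceddefinition}. On the space of bases of $V_k$, the balanced condition $\om_{\FS_k\circ\Hilb_k(h)}=\om_h$ is the vanishing of the moment map $\mu_k$ for the $SU(N_k)$-action, whose value at a basis $\bs=\{s_\alpha\}$ is, up to a trace term, the Gram matrix $(M_k)_{\alpha\beta}=\int_X s_\alpha\,\overline{s_\beta}\,(\sum_\gamma |s_\gamma|^2)^{-1}\,d\mu$; here $h$ is balanced precisely when $M_k$ is a multiple of the identity. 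The $\sigma_k$-balanced condition relaxes this to $M_k-\tfrac{\trace M_k}{N_k}I\in\t_k$, that is, to the vanishing of the component of $\mu_k$ in the orthogonal complement of $\t_k=\Lie(T_k)$ inside $\su(N_k)$. Equivalently, $\om(k)$ is a fixed point of the $T_k$-iteration \emph{modulo} $T^c_k$, so the whole problem is to be solved on the quotient by $T^c_k$, on which the infinitesimal kernel coming from holomorphic vector fields has been divided out.

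Next I would produce an approximately $\sigma_k$-balanced metric from $\om_{ex}$. Writing $\rho_k$ for the Bergman function of $\Hilb_k(\om_{ex})$, the Tian--Yau--Zelditch--Catlin expansion gives $\rho_k=k^n+\tfrac12 S(\om_{ex})\,k^{n-1}+O(k^{n-2})$, so the leading failure of $\om_{ex}$ to be balanced is governed by $S(\om_{ex})-\uS$. Since $\om_{ex}$ is extremal and $T$ is the identity component of its isometry group, $S(\om_{ex})-\uS$ is exactly the holomorphy potential of the extremal vector field, an element of $\t=\Lie(T)$; under quantization this is precisely the deviation that a one-parameter family $\sigma_k\in T^c_k$, tending to the identity as $k\to\infty$, can absorb. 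I would therefore take $\sigma_k$ generated by the extremal field at the appropriate order in $k^{-1}$ and, following \cite{Don01}, refine $\om_{ex}$ by lower-order corrections $k^{-1}\eta_1+k^{-2}\eta_2+\cdots$, producing metrics that satisfy \eqref{eq:sigmabalanceddefinition} up to an error $O(k^{-p})$ for any prescribed $p$.

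The core of the argument is the linearized analysis on the quotient. I would linearize the relative balancing operator at the approximate solution and show that, after dividing by $T^c_k$, it is invertible with inverse bounded by a fixed power of $k$. The obstruction to invertibility in the discrete case of \cite{Don01} is exactly the space of holomorphic vector fields; in the present setting these are the infinitesimal generators of $\t_k$, so passing to the $T$-relative problem removes this kernel, leaving an operator controlled by a uniform spectral gap for the relevant Lichnerowicz-type operator on the $T$-invariant complement of the holomorphy potentials. Feeding this $k$-uniform bound together with the quadratic estimate on the nonlinear remainder into a contraction-mapping scheme then yields, for all $k\geq k_0$, a genuine pair $(\om(k),\sigma_k)$ solving \eqref{eq:sigmabalanceddefinition} within $O(k^{-q})$ of the approximate solution.

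The step I expect to be the main obstacle is precisely this uniform control: one must solve simultaneously for the metric and for the finite-dimensional parameter $\sigma_k\in T^c_k$, whose ambient group $SU(N_k)$ grows with $k$, while keeping all operator norms bounded independently of $k$ up to a fixed power. Establishing that the relative linearization admits a $k$-uniform spectral gap on the complement of $\t_k$ --- the substitute for the discreteness hypothesis of \cite{Don01} --- is the delicate point, and is where the moment map reduction by $T^c_k$ must be exploited quantitatively. Once existence is secured, the $C^\infty$-convergence $\om(k)\to\om_{ex}$ follows from the $O(k^{-q})$ closeness to $\Hilb_k(\om_{ex})$, the convergence $\om_{\FS_k\circ\Hilb_k(\om_{ex})}\to\om_{ex}$, and elliptic bootstrapping to upgrade a fixed Sobolev estimate to all derivatives.
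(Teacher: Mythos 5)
Your overall architecture (moment map picture, approximate solution by an asymptotic expansion, quantitative perturbation with a $k$-power bound on the linearization) matches the paper's, but there are two concrete gaps in how you handle the weight $\sigma_k$, and they are precisely the points where the relative problem differs from Donaldson's discrete case. First, taking "$\sigma_k$ generated by the extremal field at the appropriate order in $k^{-1}$" is not enough. The leading deviation $S(\om_{ex})-\uS$ is indeed a holomorphy potential in $\Lie(T)$, but at each subsequent order $k^{-j}$, $j\geq 2$, the component of the Bergman-expansion error along the Killing potentials of $\Lie(T)$ is an integral invariant (a higher Futaki-type invariant in the sense of \cite{futaki04, ma04-1}) which does \emph{not} vanish in general --- the asymptotically Chow unstable cscK examples of \cite{osy, dz} are exactly instances where it is nonzero. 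These components lie in the cokernel of the Lichnerowicz operator, so they cannot be absorbed into the corrections $\eta_j$; they must be absorbed into the weight. The paper therefore defines a character $\cF^{\sigma}$ on $\Lie(T^c)$, proves there is a unique $\sigma_k$ annihilating it, and constructs $v_k=\log\sigma_k$ as a full expansion $c\,v_{ex}k^{-1}+\sum_{j\geq 2}\nu_j k^{-j}$ in which each $\nu_j$ is chosen, via the nondegeneracy of the Futaki--Mabuchi bilinear form, to kill the order-$j$ obstruction (Lemma \ref{lem:appro_vf}); the vanishing of $\cF^{\sigma_k}$ is then exactly what makes the solvability conditions for the $\eta_q$-equations hold at every order (Lemma \ref{lem:sameexpansions}). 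Without this, your approximate solution already fails at order $k^{-2}$.

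Second, your finite-dimensional formulation --- requiring the component of the $\su(N_k)$-moment map orthogonal to $\Lie(T_k)$ to vanish, and solving "modulo $T^c_k$" for the pair $(\om(k),\sigma_k)$ simultaneously --- is not equivalent to the $\sigma_k$-balanced condition and is not literally a moment map condition. The Gram-matrix characterization of a $\sigma$-balanced basis is $M_k=c\,\mathrm{diag}(e^{\lambda_\alpha})$ with $\mathrm{diag}(\lambda_\alpha)\in\Lie(T_k)$, which is the exponential of a torus element, not "$M_k-\tfrac{\trace M_k}{N_k}I\in\Lie(T_k)$"; moreover the orthogonal complement of $\Lie(T_k)$ in $\su(N_k)$ is not a Lie subalgebra, so the relative-vanishing condition is not the zero set of a moment map for a group action, and the quantitative Kempf--Ness machinery (Proposition \ref{prop:Dmmap}) does not apply to it directly. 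This is exactly the difficulty with Mabuchi's earlier relative-balanced approach that the paper is designed to circumvent: the weight is fixed \emph{a priori} (independently of the existence of $\om_{ex}$), the symplectic form and the moment map are both twisted by $\sigma_k$, the group is taken to be $S(\Pi_\chi U(N_k^\chi))$ rather than $SU(N_k)$ so that everything respects the weight decomposition, and only then does one look for an honest zero of the fixed twisted moment map $\mu^{\sigma_k}$. Your identification of the kernel of the linearization with $\Lie(T_k)$ and the resulting $Ck^2$ bound is the right expectation, but it is proved in the paper for the twisted metric $g^{\sigma_k}_{Z_k}$, and requires a uniform two-sided bound on the entries of $\sigma_k$ (Lemma \ref{lem:boundedweights}), which in turn rests on the convergence $kv_k\to v_{ex}$ --- again unavailable without the a priori construction of the optimal weight.
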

\noindent In order to prove Theorem \ref{thm:A}, we provide a moment map interpretation of $\sigma$-balanced metrics in Section \ref{sec:mmap}.
We also provide a characterization of the optimal weights $\sigma_k$ used in Theorem \ref{thm:A} in terms of characters on the Lie algebra of $T^c$.
\begin{remark}
 The choice of the optimal weights $\sigma_k$ determines a quantization of the extremal vector field, see Section \ref{sec:optimalweight}.
\end{remark}
\noindent Theorem \ref{thm:A} has the following two applications.
First, it simplifies Mabuchi's proof of the uniqueness of extremal metrics on polarized manifolds \cite{ma04}.
Indeed, Theorem \ref{thm:A} allows us to apply directly Kempf-Ness theorem from the theory of moment maps.
\begin{corollary}
\label{cor:b}
 Let $(X,L)$ be a polarized K\"ahler manifold. An extremal K\"ahler metric in $2 \pi c_1(L)$, if it exists, is unique up to automorphisms
 of $(X,L)$.
\end{corollary}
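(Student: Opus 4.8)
The plan is to deduce the Corollary from Theorem \ref{thm:A} together with the finite-dimensional Kempf--Ness theorem, applied to the moment map picture of Section \ref{sec:mmap}. Suppose $\om_0,\om_1\in 2\pi c_1(L)$ are two extremal metrics. By Calabi's theorem the identity components of their isometry groups are maximal compact subgroups of $\Aut_0(X)$, and any two maximal compact subgroups are conjugate; composing $\om_0$ with a suitable element of $\Aut_0(X)$ (harmless, since the conclusion is stated up to automorphism) I may therefore assume that $\om_0$ and $\om_1$ have the same isometry group, with identity component $T$. This reduction is what lets me run a single, common quantization for both metrics.

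First I would apply Theorem \ref{thm:A} to each of $\om_0$ and $\om_1$ relative to this common $T$, producing for $k\gg 0$ two sequences of $k^{th}$ $\sigma_k$-balanced metrics $\om_0(k)\to\om_0$ and $\om_1(k)\to\om_1$ in the $C^\infty$ sense. Because the optimal weights $\sigma_k\in T^c_k$ are characterized intrinsically in terms of characters on $\Lie(T^c)$ (Section \ref{sec:optimalweight}) and thus depend only on $(X,L)$ and $T$, the same $\sigma_k$ governs both sequences. Next I would feed these into the moment map interpretation of Section \ref{sec:mmap}: a $\sigma_k$-balanced metric is exactly a zero of the moment map for the $SU(N_k)$-action on the space of Hilbert points, taken relative to the torus $T_k$.

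The key algebraic input is then that $\om_0(k)$ and $\om_1(k)$ arise from projective embeddings of the \emph{same} polarized manifold $(X,L^k)$ by the complete linear system, so their Hilbert points lie in a single $SL(V_k)$-orbit, which is polystable since a zero of the moment map exists. The relative Kempf--Ness theorem then forces the zeros of the moment map inside this orbit to constitute a single orbit of $SU(N_k)$ enlarged by the complexified torus $T^c_k$. Unwinding the definitions---using that elements of $SU(N_k)$ preserve the Fubini--Study metric---this yields, for each $k$, an automorphism $\phi_k\in\Aut_0(X,L)$ with $\om_0(k)=\phi_k^*\,\om_1(k)$.

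Finally I would let $k\to\infty$. The $\phi_k$ live in the finite-dimensional group $\Aut_0(X,L)$, and combining the $C^\infty$-convergence of both balanced sequences with a compactness argument I would extract a convergent subsequence $\phi_k\to\phi\in\Aut_0(X,L)$ and pass to the limit, obtaining $\om_0=\phi^*\,\om_1$. I expect the main obstacle to be precisely this last step: Kempf--Ness controls the approximants only for each fixed $k$, so one must guarantee that the relating automorphisms $\phi_k$ stay in a fixed compact subset of $\Aut_0(X,L)$; controlling the $T^c_k$-component of the relating group element uniformly in $k$ is the delicate point, and it is here that the intrinsic characterization of the weights $\sigma_k$ should be used to pin down the limiting behaviour.
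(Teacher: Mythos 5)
Your proposal follows essentially the same route as the paper: approximate both extremal metrics by $\sigma_k$-balanced metrics via Theorem \ref{thm:A}, obtain uniqueness of $\sigma_k$-balanced metrics up to $\Aut_k^T$ at each finite $k$ by a Kempf--Ness argument (which the paper carries out concretely via the geodesic convexity of $I^{\sigma_k}$, i.e.\ Lemma \ref{lem:gZ}), and then pass to the limit. The compactness of the relating automorphisms that you flag as the delicate final step is likewise left implicit in the paper's sketch, so your account matches the paper's argument in substance.
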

\noindent
Second, it provides a generalization of the splitting theorem of Apostolov-Huang \cite{ah}.
\begin{corollary}\label{cor:c}
 Let $(X=X_1\times X_2, L= L_1\otimes L_2)$ be a product of polarized K\"ahler manifold. Assume that $X$ admits an
 extremal K\"ahler metric $g$ in the class $ 2\pi c_1(L)$. Then $g$ is a product metric $g_1\times g_2$, where $g_i$ is an extremal metric
 on $X_i$ in the class $ 2\pi c_1(L_i)$.
\end{corollary}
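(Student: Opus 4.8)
The plan is to deduce Corollary \ref{cor:c} from Theorem \ref{thm:A} by showing that the $\sigma_k$-balanced metrics approximating $\om_{ex}$ respect the product structure, and first to reduce the statement to the assertion that $\om_{ex}$ itself is a product metric. Indeed, once we know that $\om_{ex}=p_1^*\om_1+p_2^*\om_2$, with $p_i:X_1\times X_2\to X_i$ the projections and $\om_i$ a K\"ahler form on $X_i$, the scalar curvature splits as $S(\om_{ex})=S(\om_1)+S(\om_2)$ and the extremal field $\mathrm{grad}^{1,0}S(\om_{ex})=\mathrm{grad}^{1,0}S(\om_1)+\mathrm{grad}^{1,0}S(\om_2)$ decomposes into components tangent to each factor. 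Since $\delb$ preserves the splitting $TX=p_1^*TX_1\oplus p_2^*TX_2$ and each summand depends only on the variables of the corresponding factor, holomorphicity of the total field is equivalent to holomorphicity of each piece; hence each $g_i$ (the metric associated to $\om_i$) is extremal, and comparing cohomology classes gives $[\om_i]=2\pi c_1(L_i)$.

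So it suffices to prove that $\om_{ex}$ is a product. By Theorem \ref{thm:A} we have $\om_{ex}=\lim_k\om(k)$ in $C^\infty$, where $\om(k)$ is $k^{th}$ $\sigma_k$-balanced. If each $\om(k)$ is a product $p_1^*\om_1(k)+p_2^*\om_2(k)$, then restricting to the slices $X_1\times\{q\}$ and $\{p\}\times X_2$ shows that the factors converge, $\om_i(k)\to\om_i$, and that the mixed components of $\om_{ex}$ vanish; thus $\om_{ex}=p_1^*\om_1+p_2^*\om_2$ is a product. The whole problem therefore reduces to the finite dimensional statement that the $\sigma_k$-balanced metric on the product is a product metric.

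This last point rests on the compatibility of $\Hilb_k$ and $\FS_k$ with the decomposition $V_k=H^0(X,L^k)=V_k^1\otimes V_k^2$, where $V_k^i=H^0(X_i,L_i^k)$. For product data $h=h_1\otimes h_2$ the volume form and the pointwise norm factorize, $d\mu_h=d\mu_{h_1}\wedge d\mu_{h_2}$ and $|s^1\otimes s^2|^2_{h^k}=|s^1|^2_{h_1^k}\,|s^2|^2_{h_2^k}$, so that $\Hilb_k(h_1\otimes h_2)=\Hilb_k(h_1)\otimes\Hilb_k(h_2)$; conversely, choosing orthonormal bases adapted to a product inner product and using that the logarithm of a product is a sum shows that $\om_{\FS_k\circ\Hilb_k(h)}$ is again of product form. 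Consequently the $T_k$-iteration, and its $\sigma_k$-relative version, preserve the set of product metrics, with the weight factoring as $\sigma_k=\sigma_{k,1}\otimes\sigma_{k,2}$ in $T^c_{k,1}\times T^c_{k,2}\subseteq T^c_k$.

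With this in hand I would conclude as follows: starting the $\sigma_k$-iteration from a product metric keeps all iterates product metrics, and by the convergence of the iterative process to the $\sigma_k$-balanced metric (equivalently, by the uniqueness of the latter up to $\mathrm{Aut}_0(X,L)$ furnished by the moment map and Kempf--Ness picture used for Corollary \ref{cor:b}) the limiting metric $\om(k)$ is a product $p_1^*\om_1(k)+p_2^*\om_2(k)$, each factor being $\sigma_{k,i}$-balanced on $X_i$. Combined with the reduction above, this proves that $\om_{ex}$ is a product and finishes the argument. I expect the main obstacle to be precisely this splitting at the balanced level: one must ensure that the optimal weight $\sigma_k$ can be taken in $T^c_{k,1}\times T^c_{k,2}$ and that the relevant uniqueness or convergence holds in the relative setting, so that no mixing between the factors can occur. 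The product-compatibility of $\Hilb_k$ and $\FS_k$ is the computational heart that makes this possible, while the passage to the limit and the characterization of extremal product metrics are then routine.
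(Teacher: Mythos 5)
Your overall strategy coincides with the paper's: reduce the corollary to the statement that each $\sigma_k$-balanced metric on the product splits as a product of $\sigma_{k,i}$-balanced metrics on the factors, and then pass to the limit furnished by Theorem \ref{thm:A}. Your preliminary reductions are sound: the compatibility of $\Hilb_k$ and $\FS_k$ with the decomposition $V_k=V_k^1\otimes V_k^2$ (so that a product of factor-balanced metrics is balanced on the product, with weight $\sigma_{k,1}\otimes\sigma_{k,2}$), and the fact that an extremal product metric has extremal factors, are both correct and both needed.

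The gap is exactly where you suspect it: you never actually prove that the $\sigma_k$-balanced metric on $X$ \emph{is} a product. Your first route invokes convergence of the $T_k$-iteration (started from a product metric) to the $\sigma_k$-balanced metric; but no such convergence is established in this paper, and it is not an available result in the twisted, relative setting --- so this cannot be used. Your second route, presented as ``equivalent'', is the uniqueness of $\sigma_k$-balanced metrics up to automorphisms; but uniqueness only compares two balanced metrics that are already known to exist. To apply it you must first \emph{produce} a product $\sigma_k$-balanced metric, i.e.\ establish the existence of a $\sigma_{k,i}$-balanced metric on each factor $(X_i,L_i^{\otimes k})$, and nothing in your argument does this (neither factor is assumed to carry an extremal metric, so Theorem \ref{thm:A} cannot be applied to the factors directly). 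The paper closes precisely this hole with GIT: by the Kempf--Ness picture, the existence of a $\sigma_k$-balanced metric on $X$ is equivalent to a stability condition; by the Hilbert--Mumford criterion, stability of the product implies stability of each factor, since the one-parameter subgroups relevant to a factor form a subset of those relevant to the product; hence each $(X_i,L_i^{\otimes k})$ admits a $\sigma_{k,i}$-balanced metric, their product is $\sigma_k$-balanced on $X$ by the compatibility you verified, and only \emph{then} does uniqueness identify it with $\omega(k)$. Replacing your iteration/uniqueness step by this stability argument would complete the proof.
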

\noindent This theorem is proved in \cite{ah} with stronger assumptions.
In particular, Theorem \ref{thm:A} was conjectured in \cite{ah} to obtain full generality of the above splitting theorem.

We finish this introduction with a brief review on relevant works to Theorem \ref{thm:A} (see also \cite{ah, hashimoto} for comprehensive reviews).
The approximation of canonical K\"ahler metrics by specific Fubini-Study metrics is closely related to the stability of $(X,L)$ in the sense of Geometric Invariant Theory (GIT).
In fact, it is well known (\cite{zha, luo, PS03, paul, wa}) that the existence of a balanced metrics on $(X,L^{\otimes k})$
is equivalent to the Chow stability of the embedding of $X$ to the projective space by sections of $L^{\otimes k}$ if $\mathrm{Aut}(X,L)$ is discrete.
The result in \cite{Don01} implies that if a polarized manifold admits a cscK metric, under the discreteness assumption, then $(X,L)$ is asymptotically Chow stable.
This is one of the early evidences for the so-called {\it Yau-Tian-Donaldson conjecture} which states
that the existence of canonical K\"ahler metrics on a polarized manifold should be equivalent to some stability notion of the manifold in the sense of GIT.
Extensions of \cite{Don01} to the case where $\mathrm{Aut}(X,L)$ is not discrete has been pioneered by Mabuchi \cite{ma040, ma04, ma04-1, ma12, ma16}.
Without the discreteness of $\mathrm{Aut}(X,L)$, even if $(X,L)$ admits cscK metrics, we cannot expect the existence of balanced metrics on $(X, L^{\otimes k}) $ for $k \gg 0$.
Counter-examples, i.e. asymptotic Chow unstable manifolds with cscK metrics, are found (\cite{osy, dz}).
\begin{remark}\label{rem:OSY-DVZ}
Theorem \ref{thm:A} says that on the examples in \cite{osy, dz} a cscK metric
can be approximated by non-trivial $\sigma_{k}$-balanced metrics. In particular, the vector fields induced by $\sigma_{k}$ will converge to zero (see Proposition \ref{prop:weight}).
\end{remark}
In fact, there may exist $v \in \Lie (T_{k}^{c})$ inducing a non-trivial action on the line where the Chow form of $(X, L^{\otimes k}) $ lies.
This action violates the Chow semistability of $(X, L^{\otimes k})$.
To avoid this phenomenon, in \cite{ma04-1, ma05}, some additional condition is required.
Such condition is reformulatd as the vanishing of the family of integral invariants, so-called higher Futaki invariants, by Futaki \cite{futaki04}.
In fact, the counter-examples stated in Remark \ref{rem:OSY-DVZ} are given by proving the non-vanishing of the higher Futaki invariants.
However, considering extremal metrics, the above requirement cannot be satisfied, because the action induced by the (non-trivial) extremal vector field violates it.
Studying the extension of GIT to the non-discrete case, Mabuchi introduced balanced metrics relative to a given torus in the identity component of the automorphism group of $X$ in \cite{ma040}.
Then, in \cite{ma09}, he proved that the existence of extremal metrics implies the asymptotic existence of relative balanced metrics.
A difference between \cite{ma09} and Theorem \ref{thm:A} is the choice of the group action on $V_{k}$.
The group considered in \cite{ma09} is $\Pi_\chi SU(N_k^\chi)$, smaller than (\ref{eq:Gk}), considered in the present work .
This difference affects the choice of the weight $\{\lambda_{j}\}$ of relative balanced metrics in (\ref{eq:twistedbergman}).
In particular, in \cite{ma09} it is not sure that the weight comes from a torus action.
This lack of information prevents one to apply Szekelyhidi's generalization of Kempf-Ness theorem \cite{sz}.
This is a source of difficulties in applications of \cite{ma09} to other related problems on extremal metrics.
For instance, delicate work is necessary to prove the uniqueness of extremal metrics on polarized manifolds in \cite{ma04}.
Hence, a refinement of the results in \cite{ma09} was expected, e.g. \cite{ah}.
Very recently, results equivalent to Theorem \ref{thm:A} are proved by Seyyedali \cite{seyyedali} and Mabuchi \cite{ma16} independently. 
Hashimoto also gives another quantization of extremal metrics \cite{hashimoto}.
Let us explain differences between \cite{seyyedali, ma16} and the proof of Theorem \ref{thm:A}.
While the weight of relative balanced metrics comes from a given extremal metric in \cite{seyyedali, ma16}, we prove that the weight of $\sigma$-balanced metrics is determined apriori regardless of the existence of extremal metrics.
The latter is a quite natural statement, because the weight of relative balanced metrics approximates the extremal vector field, which exists regardless of the existence of extremal metrics.
A motivational observation for our proof is that a $\sigma$-balanced metrics is self-similar for $T_{k}$-iteration.
Seeing $T_{k}$-iteration as a quantization of the Calabi flow the above observation corresponds to the fact that an extremal metric is a self-similar solution to the Calabi flow.
With this point of view, we use an argument analogous to one coming from the theory of K\"ahler-Ricci solitons \cite{tz}.
Our strategy is as follows.
First, we twist the moment map in \cite{Don01} by a given $\sigma$ (Section \ref{subsec:momentmap}).
By general theory, it induces a new invariant which is a generalization of the integral invariant considered in \cite{ma04-1}.
Then we can find the optimal $\sigma$ so that this new invariant vanishes (Proposition \ref{prop:futakimorita}).
Then, the obstruction considered in \cite{ma04-1} will vanish in our twisted setting, and we can adapt the arguments in \cite{Don01} and \cite{ps04} (Section \ref{sec:proof}).

\subsection{Plan of the paper}
In Section \ref{sec:setup}, we collect necessary definitions.
In Section \ref{sec:mmap}, we give a moment map interpretation for $\sigma$-balanced metrics.
In Section \ref{sec:optimalweight}, we choose the optimal weight $\sigma_{k}$ for each $k \gg 0$.
In Section \ref{sec:proof}, we complete the proof of Theorem \ref{thm:A}, following \cite{Don01,ps04}.

\subsection{Acknowledgments}
The authors would like to thank Vestislav Apostolov, Hugues Auvray, Yoshinori Hashimoto and Julien Keller for stimulating discussions and useful remarks.
YS is supported by MEXT, Grant-in-Aid for Young Scientists (B), No. 25800050.
CT is partially supported by ANR project EMARKS No ANR-14-CE25-0010.

\section{Setup}
\label{sec:setup}
In this section, we introduce some necessary material and results that will be used throughout the paper.
Let $(X,L)$ be a polarized K\"ahler manifold of complex dimension $n$. Let $\cH$ be the space of smooth K\"ahler potentials with respect to a fixed K\"ahler form $\om \in 2\pi c_1(L)$  :
\begin{eqnarray*}
\cH= \lbrace \phi \in C^{\infty} (X) \;\vert \; \om_{\phi}:=\om + i\del\delb \phi > 0 \rbrace.
\end{eqnarray*}

\subsection{Extremal vector field}
We fix $T$ a maximal torus of $\Aut_0(X,L)$.
By a theorem of Calabi \cite{c2}, the quest for extremal metrics can be done modulo the $T$-action.
We define $\cH^T$ to be the space of $T$-invariant potentials with respect to a $T$-invariant base point $\om$.
We say that a vector field $v$ is a Hamiltonian vector field
if there is a real valued function $\theta_v$, such that $\om(v,\cdot)= -d\theta_v$. If in addition $v$ is Killing, $\theta_v$ will be called a Killing potential.
\begin{remark}
Recall that for any $v\in \Lie(T^c)$
there exists a smooth function $\theta_{v, \omega}$ such that
\begin{equation}\label{eq:potentialfunctiontheta}
	\iota_{v}\omega = - \bar{\partial} \theta_{v, \omega}.
\end{equation}
We normalize $\theta_{v, \omega}$ by
\begin{equation}\label{eq:normalizationthetasigma}
	\int_{X} \theta_{v, \omega} d \mu_{ \omega}=0.
\end{equation}
Then, the infinitesimal action of $v$ to $L$ is given by
$$
	v^{\sharp}=-2\pi i \theta_{v, \omega} z \frac{\partial}{\partial z} + v^{h}
$$
where $z$ is the fiber coordinate on $L$ and $v^{h}$ is the horizontal lift with respect to the connection with curvature $\omega$.
The normalization (\ref{eq:normalizationthetasigma}) determines uniquely the lift of the $v$-action on $X$ to $L$ (see Remark 2.2 in \cite{fm2}).
\end{remark}
\noindent For any $\phi\in\cH^T$, let $P_{\phi}^T$ be the space of normalized Killing potentials with respect to $\om_{\phi}$
whose corresponding Hamiltonian vector fields lie in $\Lie (T)$. Let $\Pi_{\phi}^T$ be the orthogonal projection from $L^2(X,\R)$ to $P_{\phi}^T$ given by the inner product on functions
\begin{equation}\label{eq:bilinearfutakimabuchi}
(f,g) \mapsto \int_X fg d\mu_{\phi}.
\end{equation}
Note that $T$-invariant metrics $\om_\phi$ satisfying $S(\phi)=\uS+\Pi_{\phi}^T S(\phi)$ are extremal.
\begin{definition}
\label{def:extremalvectorfield}
The extremal vector field $v_{ex}^T$ (or $v_{ex}$) with respect to $T$ is defined by the following formula, for any $\phi\in\cH^T$:
$$
v_{ex}^T=\nabla_g (\Pi_\phi^T S(\phi)).
$$
By \cite[Proposition 4.13.1]{gbook}, the extremal vector field does not depend on $\phi\in \cH^T$.
\end{definition}

\subsection{Quantization}
For each $k$, we can consider $\cH_k$ the space of hermitian metrics on $L^{\otimes k}$. To each element $h\in \cH_k$ one associates a K\"ahler metric $ -i \del\delb \log h$ on $X$, identifying the spaces $\cH_k$ to $\cH$. Write $\om_h$ to be the curvature of the hermitian metric $h$ on $L$. Fixing a base metric $h_0$  in $\cH_1$ such that $\om=\om_{h_0}$ the correspondence reads
\begin{eqnarray*}
\om_{\phi}=\om_{e^{-\phi}h_0}=\om+i\del\delb \phi .
\end{eqnarray*}
We denote by $\cB_k$ the space of positive definite Hermitian forms on $V_{k}:=H^0(X,L^{\otimes k})$. Let $N_k= \dim V_{k}$.
The spaces $\cB_k$ are identified with $GL_{N_k}(\C)/ U(N_k)$ using the base metric $h_0^k$. These symmetric spaces come with metrics $d_k$ defined by  Riemannian metrics:
\begin{eqnarray*}
(H_1,H_2)_h=Tr(H_1H^{-1}\cdot H_2 H^{-1}).
\end{eqnarray*}
There are maps :
\begin{eqnarray*}
\Hilb_k :  \cH & \rightarrow &\cB_k \\
\FS_k : \cB_k &\rightarrow &\cH
\end{eqnarray*}
defined by :
\begin{eqnarray*}
\forall h\in \cH\;, \; s\in V_k\;, \; \vert\vert s\vert\vert ^2_{\Hilb_k(h)}=\int_X \vert s \vert_{h^k}^2 d\mu_h
\end{eqnarray*}
and
\begin{eqnarray*}
\forall H \in \cB_k\; , \;
\FS_k(H)= \frac{1}{k} \log \bigg(\frac{1}{N_k}  \sum_{\alpha} \vert s_{\alpha}\vert_{h_0^k}^2\bigg)
\end{eqnarray*}
where  $\bs=\lbrace s_{\alpha}\rbrace$ is an orthonormal basis of $V_k$ with respect to $H$.
For any $\phi\in\cH$ and $k>0$, let $\lbrace s_{\alpha} \rbrace$ be an orthonormal basis of $V_k$ with respect to $\Hilb_k(\phi)$. The $k^{th}$ Bergman function of $\phi$ is defined to be :
$$
\rho_k(\phi)=\sum_{\alpha}\vert s_{\alpha}\vert^2_{h^k}.
$$
It is well known that a metric $\phi\in \Hilb_k(\cH)$ is balanced if and only if $\rho_k(\phi)$ is constant.
A key result in the study of balanced metrics is the following expansion:
\begin{theorem}[\cite{cat},\cite{ruan},\cite{tian90},\cite{zel}]
The following uniform expansion holds
$$
\rho_k(\phi)=k^n+A_1(\phi)k^{n-1}+A_2(\phi)k^{n-2}+...
$$
with $A_1(\phi)=\frac{1}{2}S(\phi)$ is half of the scalar curvature of the K\"ahler metric $\om_\phi$
and for any $l$ and $R\in \N$, there is a constant $C_{l,R}$ such that
$$
\vert\vert\rho_k(\phi) -\sum_{j\leq R} A_j k^{n-j} \vert \vert_{C^l} \leq C_{l,R}k^{n-R}.
$$
\end{theorem}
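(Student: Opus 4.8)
The plan is to prove this as an on-diagonal asymptotic expansion of the Bergman kernel, following the microlocal analysis of the Szeg\H{o} kernel of Boutet de Monvel--Sj\"ostrand as exploited by Zelditch and Catlin, with Tian's peak-section method held in reserve for pinning down the low-order coefficients. First I would lift the problem to the unit circle bundle. Writing $D\subset L^{-1}$ for the unit disc bundle of the dual Hermitian metric and $Y=\del D$ for its boundary, positivity of $\om_\phi$ makes $D$ strictly pseudoconvex, so $Y$ is a compact strictly pseudoconvex CR manifold carrying a free $S^1$-action. Decomposing $L^2(Y)$ into weight spaces under this action identifies $V_k=H^0(X,L^{\otimes k})$ with the weight-$k$ part $\mathcal{H}^2_k$ of the Hardy space $\mathcal{H}^2(Y)\subset L^2(Y)$, and under this identification the Bergman function $\rho_k(\phi)$ becomes the restriction to the diagonal of the $k$-th Fourier component $\Pi_k$ of the Szeg\H{o} projector $\Pi:L^2(Y)\to\mathcal{H}^2(Y)$.

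The second step is to insert the Boutet de Monvel--Sj\"ostrand parametrix $\Pi(x,y)=\int_0^{\infty}e^{it\psi(x,y)}s(x,y,t)\,dt$ modulo a smoothing kernel, where the complex phase satisfies $\Im\psi\geq 0$ and vanishes to second order on the diagonal along the contact distribution, and $s(x,y,t)\sim\sum_{j\geq 0}t^{n-j}s_j(x,y)$ is a classical symbol. Extracting the $k$-th Fourier coefficient via $\Pi_k(x,x)=(2\pi)^{-1}\int_0^{2\pi}e^{-ik\theta}\,\Pi(e^{i\theta}\cdot x,x)\,d\theta$ for $x\in Y$ and applying the method of stationary phase in the variables $(t,\theta)$ produces an asymptotic expansion in descending powers of $k$: the stationary point contributes $k^n$ from the leading symbol $s_0$, while successive corrections from $s_j$ and from the Taylor coefficients of $\psi$ generate the lower-order terms $A_j(\phi)k^{n-j}$, each $A_j$ a universal polynomial in the curvature of $\om_\phi$ and its covariant derivatives.

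To pin down the first coefficients I would work in K\"ahler normal coordinates at a fixed point $x_0$, with a local holomorphic frame in which the potential reads $\phi=|z|^2+O(|z|^4)$, the quartic correction encoding the curvature of $\om_\phi$ at $x_0$. Comparing the parametrix against the flat Bargmann--Fock model, whose Bergman density is exactly $(k/\pi)^n$ in this normalization, yields $A_0\equiv 1$, so that the leading term is $k^n$, and after computing the first curvature correction one obtains $A_1(\phi)=\tfrac{1}{2}S(\phi)$. Alternatively, this low-order identification can be carried out by Tian's peak sections: one builds approximately holomorphic sections concentrated at $x_0$ out of the model, corrects them to genuine elements of $V_k$ using H\"ormander's $L^2$ estimate for $\delb$, and reads off the leading density and its first correction directly.

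Finally, the uniform $C^l$ remainder bound follows because the entire construction is uniform over the compact base $X$: the smoothing part of the parametrix contributes $O(k^{-\infty})$ in every $C^l$-norm, and the stationary-phase expansion may be differentiated in the base point, so that truncating at $\sum_{j\le R}A_j k^{n-j}$ leaves a remainder controlled by $C_{l,R}k^{n-R}$. I expect the main obstacle to be analytic rather than computational: establishing the Boutet de Monvel--Sj\"ostrand parametrix and controlling its stationary-phase expansion uniformly enough to yield the \emph{differentiated} remainder estimates requires genuine microlocal input, whereas the identification $A_1(\phi)=\tfrac{1}{2}S(\phi)$ is a finite, if delicate, curvature computation.
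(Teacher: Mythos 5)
The paper does not prove this statement; it is quoted as a known result with attribution to Catlin, Ruan, Tian and Zelditch, and your sketch is exactly the argument of those references: lifting to the unit circle bundle, applying the Boutet de Monvel--Sj\"ostrand parametrix for the Szeg\H{o} projector and stationary phase to the $k$-th Fourier mode (Catlin, Zelditch), with peak sections or normal coordinates to identify $A_1=\tfrac{1}{2}S$ (Tian, Ruan, Lu). Your outline is correct and coincides with the standard proof the paper relies on, so there is nothing to add beyond noting that the uniform differentiated remainder estimates are indeed the only genuinely delicate analytic point, as you say.
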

\noindent In particular we have the convergence of metrics
\begin{equation}
\label{cor:ber}
\om_{\FS_k\circ\Hilb_k(\phi)}=\om_{\phi}+\cO(k^{-2}).
\end{equation}
By integration over $X$ we also deduce
\begin{equation}
\label{cor:Nk}
N_k=k^n\mathrm{Vol}(X)+\frac{1}{2} \mathrm{Vol}(X) \uS k^{n-1}+\cO(k^{n-2}).
\end{equation}
where
$$
\uS=2n\pi\frac{c_1(-K_X)\cup[\om]^{n-1}}{[\om]^n}
$$
is the average of the scalar curvature and $\mathrm{Vol}(X)$ is the volume of $(X,c_1(L))$.

\section{A moment map interpretation of $\sigma$-balanced metrics}
\label{sec:mmap}
In this section, we provide a moment map description for $\sigma$-balanced metrics.
We closely follow the treatment in \cite{gbook}
(see also \cite{ah}).
\begin{definition}
We call $\phi\in\cH$ a \textit{$k^{th}$ $\sigma_{k}$-balanced metric} if there exists $\sigma_{k} \in T^{c}_{k}$ such that
\begin{equation}
\label{eq:sigmabalanceddefinitionbis}
	\omega_{k\FS_{k} \circ \Hilb_{k}(\phi)} = \sigma_{k}^{*}(\omega_{k\phi}).
\end{equation}
\end{definition}
\begin{remark}
\label{rem:sigma balanced is relative balanced}
Taking an appropriate orthonormal basis $\bs$ of $V_{k}$ in which $\sigma_{k}$ is diagonal, equation (\ref{eq:sigmabalanceddefinitionbis}) is equivalent to the twisted Bergman function
\begin{equation}\label{eq:twistedbergman}
	\sum_{j=1}^{N_{k}} e^{ -\lambda_{j}}|s_{j}|^{2}_{h^{k}}
\end{equation}
being constant on $X$, where
$$
	\sigma_{k}= \exp(\frac{1}{2}\mathrm{diag}(\lambda_{1}, \ldots, \lambda_{N_{k}})), \,\, \lambda_{j} \in \R.
$$
This is also equivalent to the fact that the embedding of $X$ to $\C\P^{N_k-1}$ using $\bs$ satisfies
$$
	\frac{e^{- \frac{1}{2}( \lambda_{\alpha} + \lambda_{\beta})}}{\mathrm{Vol}(X)} \int_{X} \frac{ s_{\alpha} \overline{ s_{\beta}} }{\sum |s_{\gamma}|^{2}} d \mu_{\omega_{\FS_{k} \circ \Hilb_{k}(h)}}
	= \delta_{\alpha\overline{\beta}}.
$$
These characterizations tell us that a $\sigma$-balanced metric is a specific relative balanced metric as discussed in \cite{ma040}.
\end{remark}
\subsection{The relative setting}
We extend the quantization tools to the extremal metrics setup.
Replacing $L$ by a sufficiently large tensor power if necessary, we can assume that $\Aut_0(X,L)$ acts on $L$ (see e.g. \cite{kob}).
We then consider the group representation
$$
	\rho_{k}: \mathrm{Aut}_{0}(X,L) \to  \mathrm{SL}(V_{k}).
$$
Recall that $T$ is a maximal torus of $\Aut_0(X,L)$.
The $T$-action on $X$ induces a $T$-action on the space $\cB_k$ and we define $\cB_k^T$ to be the subspace of $T$-invariant elements.
The spaces $\cB_k^T$ are totally geodesic in $\cB_k$ for the distances $d_k$ and we have the induced maps :
\begin{equation}
\label{eq:Hilb}
\begin{array}{cccc}
\Hilb_k : & \cH^T & \rightarrow &\cB_k^T \\
\FS_k :& \cB_k^T &\rightarrow &\cH^T.
\end{array}
\end{equation}
The action of the complexified torus $T^{c}_{k}:=\rho_k(T^c)$ on
$V_k$ induces a weight decomposition
$$
V_k=\bigoplus_{\chi\in w_k(T)} V_{k}(\chi)
$$
where $\rho_k(T^c)$ acts on $V_{k}(\chi)$ with weight $\chi$, and $w_k(T)$ is the space of weights for this action.
Let $N_k^\chi$ be the dimension of $V_{k}(\chi)$.
We consider the space of basis:
$$
\cB^T(V_k):=\left \{ ( s_i^{\chi} )_{\chi \in w_k(T); i=1..N_k^\chi} \in (V_k)^{N_k} \vert \det(s_i^\chi)\neq 0 \text{ and } \forall (\chi, i),\; s_i^\chi\in V_{k}(\chi)  \right \}.
$$
For each $k$, we define the subgroup
$$\Aut_k^T \subset SL(V_k)$$
to be the centralizer of $T^{c}_{k}$ in
$\rho_k(\Aut_0(X))$ and the space $Z^T(V_k)$ to be the quotient
$$
Z^T(V_k)=\cB^T(V_k) / (\C^* \times \Aut_k^T),
$$
where $\C^*$ acts by scalar multiplication.
Then consider the group
$$G^c_k=S(\Pi_\chi GL_{N_k^\chi}(\C))$$
which is the complexification of
\begin{equation}\label{eq:Gk}
	G_k:=S(\Pi_\chi U(N_k^\chi)).
\end{equation}
There is a natural right action of $G^c_k$ on $\cB^T(V_k)$ that commutes with the left action of $\C^* \times \Aut_k^T$ on
$\cB^T(V_k)$. Then the actions of these groups descend to actions on the quotient $Z^T(V_k)$.
We will see in the next section that the space $Z^T(V_k)$ carries a K\"ahler structure
such that the $\sigma_k$-balanced condition appears as the vanishing of a moment map with respect to the $G_k$-action.

\subsection{A K\"ahler structure on $Z^T(V_k)$ for weighted considerations}

In this section, we will abbreviate the subscript $k$ if it does not lead to confusion.
As a space of basis for a complex vector space, $\cB^T(V)$ carries a natural integrable almost-complex structure $J_{\cB}$ that descends to an integrable almost-complex structure $J_Z$ on the quotient $Z^T(V)$.
Then we build a symplectic form as follows.
First of all, to each $\bs\in \cB^T(V)$ we can associate a unique element $H(\bs)\in \cB^T$ so that
$\bs$ is an orthonormal basis of $H(\bs) $. Note also that there is a map:
$$
\begin{array}{cccc}
 \phi : & \cB^T(V) & \rightarrow & \cH^T \\
  & \bs & \mapsto & \FS(H(\bs)),
\end{array}
$$
We will sometimes write $\phi_\bs$ for $\phi(\bs)$.
\begin{remark}
At $\mathbf{s}=\lbrace s_{\alpha}\rbrace\in \cB^T(V_k)$, we define an isomorphism
\begin{equation}
 \label{eq:isomPhis}
\begin{array}{cccc}
 \Phi_\bs: & \P(V_k^*) &\rightarrow &\C\P^{N_k-1}\\
           &  [ \mathrm{ev}]  & \mapsto & [ \mathrm{ev}(s_\alpha)]
\end{array}
\end{equation}
If $\iota : X \hookrightarrow \P(V^*)$ denotes the Kodaira embedding,
then $\om_{\phi_\bs}= (\Phi_\bs \circ \iota)^*\om_{\FS}$.
\end{remark}
\noindent Fix an element $\sigma \in T^{c} \subset \mathrm{SL}(V)$.
We introduce a K\"ahler form on $\cB^T(V)$, twisted by $\sigma$, as follows.
Take $v\in \Lie(T^{c})$ so that $\exp(v)= \sigma$.
For a given metric $\omega_{\phi}$, define the function $\psi_{\sigma,\phi}$ by
\begin{equation}
\label{def:psi}
\sigma^*\om_{\phi}=\om_{\phi}+i \del\delb \psi_{\sigma,\phi}
\end{equation}
with the normalization
\begin{equation}
\label{eq:normalization_phi}
\int_X \exp{(\psi_{\sigma,\phi})}\; d\mu_{\phi} =\frac{N_{k}}{k^{n}}.
\end{equation}
Then we consider a modified Aubin functional introduced in \cite{st} defined up to a constant by its
differential:
$$
d I^{\sigma}(\phi)(\delta\phi)=\int_X \delta \phi (1+\Delta_{\phi})e^{\psi_{\sigma,\phi}} d\mu_{\phi}
$$
where $\Delta_{\phi}=-g_{\phi}^{i\overline{j}}\frac{\del}{\del z_i}\frac{\del}{\del \overline{z}_j}$
is the complex Laplacian of $g_{\phi}$.
Define the $2$-form $\Omega_\cB^\sigma$ on $\cB^T(V)$ by
$$
\Omega_\cB^\sigma(\bs):=dd^c(I^{\sigma}\circ\phi(\bs))
$$
where $d^c$ is defined with respect to $J_\cB$.
Then we prove the following:

\begin{proposition}
 \label{prop:symp}
 The $2$-form $\Omega_\cB^\sigma$ descends to a $G$-invariant symplectic form $\Om_Z^\sigma$ on $Z^T(V)$ such that $(Z^T(V), J_Z, \Om_Z^\sigma)$ is K\"ahler.
\end{proposition}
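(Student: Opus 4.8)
The plan is to exploit that $\Omega_\cB^\sigma$ is exact by construction. Since $\Omega_\cB^\sigma=dd^c F$ with $F:=I^\sigma\circ\phi$ a smooth function on $\cB^T(V)$ and $J_\cB$ integrable, the form is automatically a closed real $(1,1)$-form, hence $J_\cB$-compatible. Thus closedness and the Hermitian type cost nothing, and the whole content of the proposition reduces to three points: $G$-invariance, descent to $Z^T(V)$, and non-degeneracy/positivity. For $G$-invariance I would first note that $F$ depends on $\bs$ only through the associated Hermitian form $H(\bs)$: indeed $\phi(\bs)=\FS(H(\bs))$, and the Bergman kernel $\sum_\alpha|s_\alpha|^2_{h_0^k}$ is independent of the choice of $H(\bs)$-orthonormal basis. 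As the $G=S(\Pi_\chi U(N_k^\chi))$-action is by block-diagonal unitary change of basis and therefore preserves $H(\bs)$, the function $F$ is $G$-invariant, and so is $\Omega_\cB^\sigma=dd^cF$.

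Next I would check that $\Omega_\cB^\sigma$ descends to $Z^T(V)=\cB^T(V)/(\C^*\times\Aut_k^T)$ by verifying invariance and horizontality for the left action. For the scalar $\C^*$-factor one computes directly $\phi(e^z\bs)=\phi(\bs)+\frac{2}{k}\Re z$, while the normalization (\ref{eq:normalization_phi}) forces $I^\sigma(\phi+c)=I^\sigma(\phi)+c\,N_k/k^n$; hence along the orbit $F$ changes only by the pluriharmonic term $\frac{2N_k}{k^{n+1}}\Re z$. Since the infinitesimal generators $\xi,J\xi$ are real-holomorphic and $F$ is affected through this pluriharmonic term alone, Cartan's formula gives $\iota_\xi\Omega_\cB^\sigma=\iota_{J\xi}\Omega_\cB^\sigma=0$ together with $L_\xi\Omega_\cB^\sigma=L_{J\xi}\Omega_\cB^\sigma=0$. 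For the $\Aut_k^T$-factor I would use that its generators act holomorphically and that, because $\Aut_k^T$ centralizes $T^c_k$ and $\sigma\in T^c$, the weight $\psi_{\sigma,\phi}$ and the modified Aubin functional $I^\sigma$ are equivariant under this action (as in \cite{st,gbook}); this yields the corresponding invariance and horizontality. Together these produce a closed, $G$-invariant, $J_Z$-compatible $(1,1)$-form $\Omega_Z^\sigma$ on the quotient.

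The hard part will be non-degeneracy and positivity, that is, the Kähler property. Here I would compute the second variation of $F$ on $\cB^T(V)$, combining the Hessian of $I^\sigma$ on $\cH^T$ with the first-order behaviour of $\phi$, following the untwisted computation in \cite{gbook}. The expected outcome is a positive semi-definite Hermitian pairing whose null space consists exactly of the tangent directions to the $\C^*\times\Aut_k^T$-orbits; equivalently, $F=I^\sigma\circ\phi$ is strictly plurisubharmonic transverse to these orbits. Two ingredients enter: that the Kodaira embedding is an immersion for $k\gg0$, so a variation of $\bs$ transverse to the orbits genuinely moves the metric $\om_{\phi_\bs}$, and that $I^\sigma$ is strictly convex along geodesics in $\cH^T$. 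The delicate point, and the main obstacle, is to verify that the $\sigma$-twist only reweights the underlying $L^2$-pairing by the strictly positive factor $e^{\psi_{\sigma,\phi}}$ and hence preserves strict transverse positivity rather than introducing spurious degeneracies. Once this is established, $\Omega_Z^\sigma(\cdot,J_Z\cdot)$ is positive definite on $Z^T(V)$, so $\Omega_Z^\sigma$ is non-degenerate and $(Z^T(V),J_Z,\Omega_Z^\sigma)$ is Kähler.
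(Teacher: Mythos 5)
Your overall architecture matches the paper's: closedness and type come for free from $\Omega_\cB^\sigma=dd^c(I^{\sigma}\circ\phi)$; $G$-invariance follows because $G$ preserves $H(\bs)$ and hence $\phi_\bs$; and descent plus the K\"ahler property reduce to showing that $g_\cB^\sigma=\Omega_\cB^\sigma(\cdot,J_\cB\cdot)$ is positive semi-definite with kernel exactly the tangent directions to the $\C^*\times\Aut^T$-orbits. Your treatment of the $\C^*$-direction (the pluriharmonic shift $\frac{2N_k}{k^{n+1}}\Re z$, uniform in $\bs$) is correct and in fact more explicit than the paper, which simply computes $g_\cB^\sigma(A,A)=0$ for diagonal $A$.

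The genuine gap is the step you yourself flag as ``the main obstacle'': strict transverse positivity. You state the expected outcome but do not prove it, and your heuristic --- that the twist merely reweights the underlying $L^2$-pairing by the positive factor $e^{\psi_{\sigma,\phi}}$ --- is not quite what happens. Writing $\bs(t)=\bs\cdot\exp(tA)$ and $\bs(t)^c=\bs\cdot\exp(tJ_\cB A)$ and using (\ref{eq:gB}), the second variation of $I^{\sigma}\circ\phi$ along these Bergman paths is not the untwisted Hessian weighted by $e^{\psi}$: since $\psi_{\sigma,\phi}$ itself varies with $\phi$, extra terms involving $\nabla\psi_{\sigma,\phi}$ appear, and the resulting quadratic form is the $L^2$-norm of the normal projection $\pi_N\hat{A}$ with respect to a \emph{twisted} fibre metric $|\cdot|_{\sigma\cdot h_{FS}}$ (cf.\ Lemma \ref{lem:gZ}), not a conformal rescaling of the Fubini--Study one. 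The paper closes precisely this step by applying \cite[Lemma 3.3.1]{st} to the two paths above, which yields $g_\cB^\sigma(A,A)\geq 0$ for trace-free $A$ with equality if and only if the one-parameter subgroups determined by $\bs(t)$ and $\bs(t)^c$ lie in $\Aut^T$. Without that lemma, or an equivalent computation of the twisted second variation together with its equality case, your argument does not establish that $\Om_Z^\sigma$ is non-degenerate, which is the substantive content of the proposition.
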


\begin{proof}
Let us denote by $\Theta_\cB^\sigma$ the $1$-form $d^c(I^{\sigma}(\phi))$. We show that $\Theta_\cB^\sigma$ is invariant under the actions
of $G$ and $\Aut^T$. Then $\Theta_\cB^\sigma$ and $\Omega_\cB^\sigma=d \Theta_\cB^\sigma$ descend to $G$-invariant forms on $Z^T(V)$.
By definition, for any $\bs\in\cB^T(V)$ and any $A\in T_\bs\cB^T(V)$,
$$
\Theta_\cB^\sigma(\bs)(A)=d^c(I^{\sigma}\circ\phi(\bs))(A)=-d(I^{\sigma}\circ\phi(\bs))(iA)=-dI^\sigma(\phi_\bs)(D_\bs\phi(iA))
$$
and thus
$$
\Theta_\cB^\sigma(\bs)(A)=\int_X D_\bs\phi(iA) (1+\Delta_{\phi_\bs})e^{\psi_{\sigma,\phi_\bs}} d\mu_{\phi_\bs}.
$$
Note that at each point $\bs\in\cB^T(V)$, the isomorphism $\Phi_\bs$ from (\ref{eq:isomPhis}) induces an isomorphism
$$
	T_\bs\cB^T(V)\simeq  \bigoplus_\chi \gl_{N^\chi}(\C).
$$
We then denote by $\hat{A}$ the vector field on $\C\P^N$
induced by $A= (A_{ij})\in T_\bs \cB^T(V)$.
Set $s=\lbrace s_j \rbrace$. A direct computation shows (e.g. \cite[lemma 8.4.3]{gbook})
$$
D_\bs\phi(iA)=-\dfrac{\sum_{ij} A^-_{ij}(s_i,s_j)_{h_0}}{\sum_k \vert s_k \vert_{h_0}^2}
$$
where $A^-$ is the anti-hermitian part of $A$. In other words, if $\mu^{\hat{A^-}}$ denotes the momentum of $\hat{A^-}$ on $\C\P^N$,
$$
D_\bs\phi(iA)=-\mu^{\hat{A^-}}\circ \Phi_s \circ \iota.
$$
To simplify notations, we let $\mu_\bs^{\hat{A^-}}=\mu^{\hat{A^-}}\circ \Phi_s \circ \iota$, so that
\begin{equation}
\label{eq:theta}
\Theta_\cB^\sigma(\bs)(A)=-\int_X \mu_\bs^{\hat{A^-}} (1+\Delta_{\phi_\bs})e^{\psi_{\sigma,\phi_\bs}} d\mu_{\phi_\bs}.
\end{equation}
Now from the definition of the action of $G$ on $\cB^T(V)$,
$$\forall g\in G,\; \om_{\phi(\bs\cdot g)}=\om_{\phi(\bs)}.$$
Thus $\Theta_\cB^\sigma$ is $G$-invariant. Then (\cite{gbook} Proposition 8.3.2)
$$
\forall \gamma \in \Aut^T,\; \om_{\phi(\gamma \cdot \bs)}=\gamma^*\om_{\phi(\bs)},
$$
and a change of variables in (\ref{eq:theta}) shows that $\Theta_\cB^\sigma$ is $\Aut^T$-invariant.

It remains to show that $g_\cB^\sigma:=\Omega_\cB^\sigma(\cdot,J_\cB\cdot)$ is positive and vanishes exactly on the distribution given by the leaves of the
$\C^*\times \Aut^T$-orbits. Let $\bs\in\cB^T(V)$ and $A\in T_\bs\cB^T(V)$. We have
$$
g_\cB^\sigma(A,A)=\Omega_\cB^\sigma(A,J_\cB A)=dd^c(I^{\sigma}\circ\phi)(A,J_\cB A).
$$
Let $\bs(t)=\bs \cdot\exp(tA)$ and
$\bs(t)^c=\bs \cdot\exp(tJ_\cB A)$. Then, as $J_\cB$ is integrable,
\begin{equation}
 \label{eq:gB}
g_\cB^\sigma(A,A)=\dfrac{d^2}{dt^2}\bigg\vert_{t=0} (I^{\sigma}\circ\phi) (\bs(t))+\dfrac{d^2}{dt^2}\bigg\vert_{t=0} (I^{\sigma}\circ\phi )(\bs(t)^c).
\end{equation}
\noindent
If $A\in T_\bs \cB^T(V)\simeq \bigoplus_\chi \gl_{N^\chi}(\C)$ is diagonal, that is to say, corresponds to the $\C^*$ action on $\cB^T(V)$,
then we easily compute $g_\cB^\sigma(A,A)=0$. Now if we assume $A$ to be trace-free,
by  \cite[Lemma 3.3.1]{st}
applied to
$$
	\phi(\bs(t))=\log(\sum_{\alpha} \vert s_\alpha\cdot \exp(tA) \vert_{h_0}^2)
$$
and
$$
	\phi(\bs(t)^c)=\log(\sum_{\alpha} \vert s_\alpha\cdot \exp(tJ_\cB A) \vert_{h_0}^2),
$$
we deduce that
$$
g_\cB^\sigma(A,A)\geq 0
$$
with equality if and only if $t\mapsto\exp(tA)$ and $t\mapsto\exp(t J_\cB A)$ (or more precisely the subgroups of $SL(V)$ determined
by $\bs(t)$ and $\bs(t)^c$) are in $\Aut^T$. This concludes the proof.
\end{proof}

\subsection{The moment map for weighted considerations}
\label{subsec:momentmap}
We define
$$
\begin{array}{cccc}
 \mu^\sigma : & \cB^T(V) & \rightarrow & \Lie(G) \\
  &  \bs=\lbrace s_j \rbrace & \mapsto & i\Hilb(\phi_\bs)_{0}( \sigma\cdot s_{j},\, \sigma\cdot s_{k})
\end{array}
$$
where the subscript $0$ means the trace-free part of the matrix.
The $\sigma$-balanced condition corresponds to the existence of a basis $\bs\in (\mu^\sigma)^{-1}(0)$:

\begin{proposition}
 \label{prop:sigmammap}
 For any $\bs\in \cB^T(V)$, $\om_{\phi(\bs)}$ is $\sigma$-balanced if and only if $\mu^\sigma(\bs)=0$.
\end{proposition}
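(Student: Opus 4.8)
The plan is to reduce the vanishing of $\mu^\sigma$ to an explicit orthonormality statement and then to recognise that statement as the $\sigma$-balanced equation (\ref{eq:sigmabalanceddefinitionbis}). First I would record two structural facts. Since $\bs\in\cB^T(V)$, the inner product $H(\bs)$ is $T$-invariant, hence $\phi_\bs\in\cH^T$ and $\Hilb(\phi_\bs)\in\cB^T$; consequently distinct weight spaces $V_k(\chi)$ are $\Hilb(\phi_\bs)$-orthogonal. Moreover $\sigma\in T^c_k$ acts on each $V_k(\chi)$ by the single scalar $\chi(\sigma)$, so $\sigma\cdot s$ remains in the weight space of $s$. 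Combining these, the Hermitian matrix with entries $\Hilb(\phi_\bs)(\sigma\cdot s_\alpha,\sigma\cdot s_\beta)$ is block-diagonal with respect to the weight decomposition, which is exactly what places $\mu^\sigma(\bs)$ in $\Lie(G)=\Lie(S(\Pi_\chi U(N_k^\chi)))$. Taking the trace-free part, I then get that $\mu^\sigma(\bs)=0$ if and only if
\[
\Hilb(\phi_\bs)(\sigma\cdot s_\alpha,\sigma\cdot s_\beta)=c\,\delta_{\alpha\beta}
\]
for a \emph{single} positive constant $c$; equivalently, $\{\sigma\cdot s_\alpha/\sqrt c\}$ is an $\Hilb(\phi_\bs)$-orthonormal basis of $V_k$. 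It is worth noting that the global determinant-one condition in the definition $G_k=S(\Pi_\chi U(N_k^\chi))$ is precisely what forces one common constant rather than one constant per weight block, and this will be indispensable below.

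Next I would feed this orthonormal basis into the definition of $\FS$. By definition $\phi_\bs=\FS(H(\bs))=\frac1k\log\big(\frac1{N_k}\sum_\alpha|s_\alpha|^2_{h_0^k}\big)$, and evaluating $\FS$ on $\Hilb(\phi_\bs)$ through the orthonormal basis $\{\sigma\cdot s_\alpha/\sqrt c\}$ gives
\[
\FS(\Hilb(\phi_\bs))=\frac1k\log\Big(\frac1{N_k c}\sum_\alpha|\sigma\cdot s_\alpha|^2_{h_0^k}\Big)=\phi(\sigma\cdot\bs)-\tfrac1k\log c,
\]
where $\phi(\sigma\cdot\bs)=\FS(H(\sigma\cdot\bs))$ is the potential of the (still weight-adapted) basis $\sigma\cdot\bs=\{\sigma\cdot s_\alpha\}$. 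Thus $\FS\circ\Hilb(\phi_\bs)$ and $\phi(\sigma\cdot\bs)$ differ by a constant. Since $T^c_k$ is abelian it lies in its own centraliser $\Aut^T_k$, so I may apply the transformation rule $\om_{\phi(\gamma\cdot\bs)}=\gamma^*\om_{\phi(\bs)}$ recalled in the proof of Proposition \ref{prop:symp} (from \cite{gbook}) with $\gamma=\sigma$, obtaining $\om_{\phi(\sigma\cdot\bs)}=\sigma^*\om_{\phi_\bs}$. Hence $\mu^\sigma(\bs)=0$ yields $\om_{\FS\circ\Hilb(\phi_\bs)}=\om_{\phi(\sigma\cdot\bs)}=\sigma^*\om_{\phi_\bs}$, which is exactly (\ref{eq:sigmabalanceddefinitionbis}); so $\om_{\phi_\bs}$ is $\sigma$-balanced. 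Working with potentials relative to the fixed base $\om=\om_{h_0}$ throughout avoids any issue with $\sigma^*\om\neq\om$.

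For the converse I would run the same chain backwards. Assuming $\om_{\phi_\bs}$ is $\sigma$-balanced, the equation $\om_{\FS\circ\Hilb(\phi_\bs)}=\sigma^*\om_{\phi_\bs}=\om_{\phi(\sigma\cdot\bs)}$ says the corresponding potentials agree up to a constant, so the Bergman densities satisfy $\sum_\alpha|t_\alpha|^2_{h_0^k}=e^{ka}\sum_\alpha|\sigma\cdot s_\alpha|^2_{h_0^k}$, where $\{t_\alpha\}$ is any $\Hilb(\phi_\bs)$-orthonormal basis. The substantive point — and the step I expect to be the main obstacle — is to pass from this pointwise identity of densities back to an identity of Hermitian forms, namely $\Hilb(\phi_\bs)=e^{-ka}H(\sigma\cdot\bs)$. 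This rests on the injectivity of the map sending a Hermitian form on $V_k$ to its Bergman density (valid since the sections embed $X$), and it is here that one must carefully match the geometric pullback $\sigma^*$ with the algebraic change of basis $\bs\mapsto\sigma\cdot\bs$ and track the normalising scalar. Once this identity is secured, $\{\sigma\cdot s_\alpha\}$ is $\Hilb(\phi_\bs)$-orthogonal with common norm $e^{ka}$, i.e. $\Hilb(\phi_\bs)(\sigma\cdot s_\alpha,\sigma\cdot s_\beta)=e^{ka}\delta_{\alpha\beta}$, whence $\mu^\sigma(\bs)=0$. The bookkeeping of the constant $c$ and of the factors of $k$ arising from the $\frac1k$ in $\FS_k$ is routine, and I would relegate it to the end.
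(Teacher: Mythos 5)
Your argument is correct and is essentially the paper's own proof, which simply observes that $\mu^\sigma(\bs)=0$ means $c^{-1/2}\,\sigma\cdot\bs$ is orthonormal for $\Hilb(\phi_\bs)$ and identifies this with the $\sigma$-balanced equation; you have merely expanded the two steps the paper treats as immediate (the single constant $c$ forced by the determinant-one condition, and the injectivity of the Bergman-density map for the converse). Both of those elaborations are accurate, so there is nothing to correct.
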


\begin{proof}
Let $\bs=\lbrace s_j\rbrace\in\cB^T(V)$. By definition,
$\mu^\sigma(\bs)=0$ if and only if $c^{-1/2}\sigma^*\bs$ is an orthonormal basis for $\Hilb(\phi_\bs)$, for some constant $c \neq 0$. This is the same as
$\om_{\phi_\bs}$ being $\sigma$-balanced.
\end{proof}
\noindent
Lastly, we show that $\mu^\sigma$ is indeed a moment map for our setup:
\begin{proposition}
 \label{prop:mmap}
 The map $\mu^\sigma$ descends to an equivariant moment map for the $G$-action on $Z^T(V)$ with respect to $\Om_Z^\sigma$.
\end{proposition}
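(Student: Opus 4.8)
The plan is to exploit the fact, established in the proof of Proposition~\ref{prop:symp}, that $\Om_\cB^\sigma = d\,\Theta_\cB^\sigma$ with $\Theta_\cB^\sigma$ a $1$-form invariant under $G$ (and under $\Aut^T$). For an exact symplectic form with an invariant primitive, Cartan's formula produces a moment map automatically: setting $\langle \tilde\mu^\sigma, \xi\rangle := -\,\iota_{X_\xi}\Theta_\cB^\sigma$ for $\xi\in\Lie(G)$, where $X_\xi$ is the fundamental vector field of the $G$-action, one has
$$
\iota_{X_\xi}\Om_\cB^\sigma = \iota_{X_\xi} d\Theta_\cB^\sigma = \cL_{X_\xi}\Theta_\cB^\sigma - d\,\iota_{X_\xi}\Theta_\cB^\sigma = d\langle\tilde\mu^\sigma,\xi\rangle,
$$
since $\cL_{X_\xi}\Theta_\cB^\sigma = 0$. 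Thus the whole statement reduces to the single identity $\langle \mu^\sigma(\bs),\xi\rangle = -\,\iota_{X_\xi}\Theta_\cB^\sigma(\bs)$ for all $\xi\in\Lie(G)$, together with the verification that $\mu^\sigma$ is $\C^*\times\Aut^T$-invariant, so that it descends to $Z^T(V)$.

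To prove the identity I would first identify the fundamental vector field. Under the isomorphism $T_\bs\cB^T(V)\simeq\bigoplus_\chi\gl_{N^\chi}(\C)$, the field $X_\xi$ at $\bs$ is the infinitesimal right action of $\xi$, and since $\xi\in\Lie(S(\Pi_\chi U(N^\chi)))$ is anti-Hermitian one has $A^- = \xi$ in the notation of~(\ref{eq:theta}). Substituting into~(\ref{eq:theta}) gives
$$
-\,\iota_{X_\xi}\Theta_\cB^\sigma(\bs) = \int_X \mu_\bs^{\hat\xi}\,(1+\Delta_{\phi_\bs})\,e^{\psi_{\sigma,\phi_\bs}}\, d\mu_{\phi_\bs},
$$
where $\mu_\bs^{\hat\xi} = \big(\sum_{ij}\xi_{ij}(s_i,s_j)_{h^k}\big)/\rho_k(\phi_\bs)$ is the pulled-back Fubini--Study momentum. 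This must be matched with $\langle\mu^\sigma(\bs),\xi\rangle = \trace\big(\mu^\sigma(\bs)\,\xi^*\big)$, i.e. with the twisted Gram matrix $i\,\Hilb(\phi_\bs)_0(\sigma s_j,\sigma s_k)$ paired with $\xi$. Here I would integrate by parts, using self-adjointness of $1+\Delta_{\phi_\bs}$ with respect to $d\mu_{\phi_\bs}$, and insert the defining property~(\ref{def:psi}) of $\psi_{\sigma,\phi}$: unwinding the pullback of the Fubini--Study potential by $\sigma$ shows that, up to the constant fixed by~(\ref{eq:normalization_phi}), $e^{\psi_{\sigma,\phi}}$ is proportional to $\big(\sum_j|\sigma s_j|^2_{h^k}\big)/\rho_k(\phi)$. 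Combining these facts converts the momentum integral into the twisted Hermitian inner products $\int_X(\sigma s_j,\sigma s_k)_{h^k}\,d\mu_\phi$, yielding the desired identity. I would also note here that, because $\phi$ is $T$-invariant, sections of distinct weights are $\Hilb(\phi)$-orthogonal, so $\mu^\sigma(\bs)$ is block-diagonal and genuinely lands in $\Lie(G)$.

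It then remains to check descent and equivariance. Invariance under the scalar $\C^*$-action is a direct calculation: scaling $\bs\mapsto\lambda\bs$ shifts $\phi_\bs$ by a constant and rescales $h^k$ by $|\lambda|^{-2}$, and the two factors of $|\lambda|^{\pm2}$ cancel in $\Hilb(\phi_{\lambda\bs})(\sigma\lambda s_j,\sigma\lambda s_k)$. For $\Aut^T$-invariance I would mirror the change-of-variables argument already used for $\Theta_\cB^\sigma$ in Proposition~\ref{prop:symp}, using crucially that every $\gamma\in\Aut^T$ centralizes $T^c_k$ and hence commutes with $\sigma$. Once $\mu^\sigma$ is known to descend, the moment map equation on $Z^T(V)$ follows from the Cartan computation of the first paragraph, and equivariance of $\mu^\sigma$ is the standard consequence of the $G$-invariance of the primitive $\Theta_\cB^\sigma$.

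The main obstacle is the matching identity of the second paragraph. Everything else is either formal (Cartan's formula) or a routine invariance check, but turning the weighted momentum integral $\int_X\mu_\bs^{\hat\xi}(1+\Delta_{\phi_\bs})e^{\psi_{\sigma,\phi_\bs}}d\mu_{\phi_\bs}$ into the twisted Gram matrix requires handling the operator $1+\Delta_{\phi_\bs}$ correctly and using the precise normalization of $\psi_{\sigma,\phi}$. This is exactly where the design of the modified Aubin functional $I^\sigma$ of~\cite{st}---whose differential is built from $(1+\Delta_{\phi})e^{\psi_{\sigma,\phi}}$---is engineered to reproduce the $\sigma$-weighted balanced condition, and I would expect to lean on the computations of~\cite{st} to carry it out cleanly.
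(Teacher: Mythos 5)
Your proposal follows essentially the same route as the paper: both reduce the statement to the identity $\langle\mu^\sigma(\bs),\xi\rangle=\pm\,\Theta_\cB^\sigma(\bs)(\hat\xi)$ for the $G$-invariant primitive $\Theta_\cB^\sigma=d^c(I^\sigma\circ\phi)$, obtain the moment map property formally from Cartan's formula, and establish the matching identity by the computations of \cite{st} (the paper invokes Equation (15) there, applied to the path $\phi(\bs\cdot e^{itA})$, exactly as you anticipate). Your additional explicit check that $\mu^\sigma$ is $\C^*\times\Aut^T$-invariant, and your derivation of equivariance from invariance of the primitive rather than the paper's direct computation of $\mu^\sigma(\bs\cdot g)=Ad_{g^{-1}}\mu^\sigma(\bs)$, are only cosmetic differences.
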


\begin{proof}
 To see $\mu^\sigma$ as a moment map, we identity $\Lie(G)$ with its dual using the standard hermitian product on $\su_N(\C)$.
Note that $\mu^\sigma$ takes value in $\Lie(G)$ because $\phi(\bs)$ is $T$-invariant, and $\sigma$ commutes with $T$.
We first show that $\mu^\sigma$ is equivariant with respect to the $G$-action. Let $\bs=(s_j)\in\cB^T(V)$ and $g\in G$. As $\om_{\phi(\bs \cdot g)}=\om_{\phi(\bs)}$,
and as the actions of $G$ and $\Aut^T$ commute
 \begin{eqnarray}
	\nonumber
		\mu^\sigma(\bs\cdot g)
		&=&
		i\Hilb(\phi(\bs\cdot g))_0(\sigma\cdot(s_j\cdot g),\sigma\cdot(s_k\cdot g))
	\\
	\nonumber
	&=&
		i\Hilb(\phi_\bs)_0((\sigma\cdot s_j)\cdot g,(\sigma\cdot s_k)\cdot g)
		\\
	\nonumber
	&=&
		Ad_{g^{-1}} (\mu^\sigma(\bs)).
\end{eqnarray}
Thus $\mu^\sigma$ is $Ad$-equivariant. Then, any element $a\in \Lie(G)$ defines
a vector field $\hat{a}$ on $\cB^T(V)$ via $\Phi_\bs$, and
\begin{eqnarray}
\label{eq:musigma a}
	\nonumber
		\langle \mu^\sigma(\bs),a \rangle
		&=&
		 \langle i \Hilb( \sigma^*\phi_\bs)_0,a \rangle
	\\
	&=&
		-\int_X \frac{\sum_{kj}
		( \sigma^*s_k,\sigma^*s_j)_{h_0} \overline{ia_{jk}}}{\sum_k \vert s_k \vert_{h_0}^2} d\mu_{\phi_\bs}.
\end{eqnarray}
On the other hand, from (\ref{eq:theta}),
\begin{equation*}
 \label{eqn:theta}
\Theta_\cB^\sigma(\bs)(\hat{a})=-\int_X \mu_\bs^{a} (1+\Delta_{\phi_\bs})e^{\psi_{\sigma,\phi_\bs}} d\mu_{\phi_\bs}.
\end{equation*}
Then, from the proof of Lemma 3.3.1 in \cite{st}, applying \cite[Equation (15)]{st} to
the path of metrics $\phi_t=\phi(\bs\cdot e^{tiA})$ in $\cH^T$, we have:
\begin{equation*}
 \label{eqn:end}
 \int_X \mu_\bs^{a} (1+\Delta_{\phi_\bs})e^{\psi_{\sigma,\phi_\bs}} d\mu_{\phi_\bs}=
 \int_X \frac{\sum_{jk}
		( \sigma^*s_k,\sigma^*s_j)_{h_0} \overline{ia_{jk}}}{\sum_k \vert s_k \vert_{h_0}^2} d\mu_{\phi_\bs}.
\end{equation*}
Thus
 \begin{equation}
  \label{eq:theta=mu}
  \forall a\in \Lie(G),\; \langle \mu^\sigma(\bs),a \rangle=\Theta_\cB^\sigma(\bs)(\hat{a}).
 \end{equation}
By Proposition \ref{prop:symp}, $\Theta_\cB^\sigma(\hat{a})$ is the momentum for $\hat{a}$ on $Z^T(V)$.
As $\Omega_\cB^\sigma=d\Theta_\cB^\sigma$, Equation (\ref{eq:theta=mu}) tells us that $\mu^\sigma$ is a moment map for the $G$-action on $Z^T(V)$.
\end{proof}

\section{Optimal choice of the weight $\sigma$}
\label{sec:optimalweight}
Through this section, we abbreviate the subscript $k$ if it does not lead to confusion.
With a moment map interpretation for $\sigma$-balanced metrics at hand, assuming the existence
of an extremal metric, we would like to show that for $k$ large enough, there is a $\sigma$-balanced metric on $X$, i.e., a zero of the moment map $\mu^{\sigma}$.
To find such a point, we consider the gradient flow of $I^{ \sigma} \circ \phi$.
General theory of moment maps reduces the problem to an estimate on the first derivative of $\mu^{\sigma}$.
If $T$ is not trivial, we cannot hope to obtain the desired estimate for general $\sigma \in T^{c}$,
and we need to choose $\sigma$ carefully to avoid this obstruction.
Our argument is inspired by \cite{tz} with the viewpoint that $\sigma$-balanced metrics are self-similar for the discrete dynamical system
$$
	\omega \mapsto \omega_{\FS\circ \Hilb(\omega)}.
$$

\subsection{Optimal weight $\sigma$}
First, we introduce an invariant from the derivative of $\mu^{ \sigma}$, which determines the optimal weight $\sigma$.
Fix any $\bs=\{s_{j}\} \in\cB^{T}(V)$ and consider the corresponding K\"ahler metric $\omega_{\phi_{\bs}}$.
For $a= (a_{ij}) \in \Lie(T^{c})$ so that $\{\exp(ita)\}  \subset SL (V)$, define the function $\theta_{a, \bs}$ by
$$
	\theta_{a, \bs}:= \frac{\sum_{jk} (s_{j}, s_{k})_{h_{0}}\overline{i a_{jk}} }{\sum_{j} |s_{j}|_{h_{0}}^{2}}.
$$
For $\sigma\in T^c$, similarly to the modified Futaki invariant, we define
a character:
\begin{equation*}
 \begin{array}{cccc}
\cF^{ \sigma}: & \Lie(T^{c}) &  \to \C \\
	 & a & \mapsto & - \int_{X} \theta_{a, \bs} (1 + \Delta_{ \omega_{\phi_{\bs}}})e^{\psi_{\sigma, \phi_{\bs}}} d\mu_{\phi_{\bs}}.
 \end{array}
\end{equation*}
\begin{proposition}
\label{prop:futakimorita}
The map  $\cF^{ \sigma}$ satisfies:
\begin{enumerate}
\item\label{prop:fm2} If $\bs$ is $\sigma$-balanced, then $\cF^{ \sigma}$ vanishes. More precisely:
$$
	\frac{d}{dt}\bigg\vert_{t=0} I^{ \sigma} \circ \phi (\bs \cdot e^{ita})=\langle \mu^{ \sigma } (\bs ), a \rangle = \cF^{ \sigma}(a).
$$
\item\label{prop:fm1} $\cF^{ \sigma}$ is independent of the choice of basis $\bs$.
\item\label{prop:fm3} There exists a unique $\sigma \in T^{c}$, such that $\cF^{ \sigma}$ vanishes.
\end{enumerate}
\end{proposition}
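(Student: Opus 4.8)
My plan is to obtain the first two assertions formally from the moment-map picture of Section~\ref{sec:mmap}, and to concentrate the work on the third. For~(\ref{prop:fm2}), I would differentiate $I^\sigma\circ\phi$ along the path $t\mapsto\bs\cdot e^{ita}$. Since $\phi(\bs)=\FS(H(\bs))=\frac{1}{k}\log\big(\frac{1}{N_k}\sum_\alpha|s_\alpha|^2_{h_0^k}\big)$, a direct computation identifies $\frac{d}{dt}\big\vert_{t=0}\phi(\bs\cdot e^{ita})$ with $-\theta_{a,\bs}$ (up to the sign fixed by the conventions of Section~\ref{sec:mmap}), and inserting this into $dI^\sigma(\phi)(\delta\phi)=\int_X\delta\phi\,(1+\Delta_\phi)e^{\psi_{\sigma,\phi}}d\mu_\phi$ gives $\frac{d}{dt}\big\vert_{t=0}I^\sigma\circ\phi(\bs\cdot e^{ita})=\cF^\sigma(a)$. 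The middle equality is then precisely the moment-map identity~(\ref{eq:theta=mu}) read for the vector field $\hat a$, because $\theta_{a,\bs}$ is the momentum $\mu_\bs^{\hat a}$ occurring in~(\ref{eq:theta}). If $\bs$ is $\sigma$-balanced then $\mu^\sigma(\bs)=0$ by Proposition~\ref{prop:sigmammap}, so the whole chain vanishes.

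For~(\ref{prop:fm1}), the key observation is that the twisting torus acts trivially on $Z^T(V)$. As $T^c$ is abelian, $T^c_k=\rho_k(T^c)$ centralises itself, so $T^c_k\subset\Aut_k^T$; moreover its compact part $T_k$ lies in $G$ and acts on $\cB^T(V)$ by the very same scalars on the weight spaces $V_k(\chi)$ as the $\Aut_k^T$-action does. Since $Z^T(V)$ is the quotient of $\cB^T(V)$ by $\C^*\times\Aut_k^T\supset T_k$, the fundamental vector field of any $a\in\Lie(T^c)$ vanishes on $Z^T(V)$. Writing $\rho_k(a)=b_1+ib_2$ with $b_1,b_2\in\Lie(T_k)\subset\Lie(G)$, the moment-map relation gives $d\langle\mu^\sigma,b_i\rangle=\iota_{\hat b_i}\Om_Z^\sigma=0$, so each $\langle\mu^\sigma(\cdot),b_i\rangle$ is constant on the connected space $Z^T(V)$. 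By~(\ref{prop:fm2}), $\cF^\sigma(a)=\langle\mu^\sigma(\bs),a\rangle=\langle\mu^\sigma(\bs),b_1\rangle+i\langle\mu^\sigma(\bs),b_2\rangle$ is therefore independent of $\bs$.

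The substance is~(\ref{prop:fm3}). Thanks to~(\ref{prop:fm1}) I may fix a $T$-invariant basis $\bs$ once and for all and vary only $\sigma$. For $\sigma$ in the compact torus $T$, $\sigma$ is an isometry of the $T$-invariant metric $\om_{\phi_\bs}$, so $\psi_{\sigma,\phi_\bs}$ is constant and $\cF^\sigma=\cF^{\Id}$; hence $\cF^\sigma$ depends on $\sigma$ only through its non-compact part, and I parametrise $\sigma=\exp(v)$ by $v$ in the real vector space $\sqrt{-1}\,\Lie(T)$, of dimension $\dim T$. The plan is then to realise $v\mapsto\cF^{\exp v}$ as the differential of a functional $\mathcal{Z}$ on $\sqrt{-1}\,\Lie(T)$, modelled on the normalised weight $e^{\psi_{\sigma,\phi_\bs}}$ of~(\ref{eq:normalization_phi}) and playing, in this finite-dimensional self-similar setting, the role of the functional used by Tian--Zhu~\cite{tz} to single out the soliton vector field. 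Then the zeros of $\cF^\sigma$ are exactly the critical points of $\mathcal{Z}$, and I would finish by proving that $\mathcal{Z}$ is strictly convex along lines and proper: strict convexity from a second-variation computation expressing $\frac{d^2}{dt^2}\mathcal{Z}(v+ta)$ as a weighted variance of the Hamiltonian $\theta_{a,\bs}$ (nonnegative, and positive unless $a$ induces the trivial field), and properness from the linear growth of $\psi_{\sigma,\phi_\bs}$ toward the extreme weights as $|v|\to\infty$. Strict convexity then gives uniqueness of a minimiser and properness its existence, yielding the unique $\sigma\in T^c$ with $\cF^\sigma\equiv0$.

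The main obstacle is precisely this last step. One must first pin down $\mathcal{Z}$ so that $d\mathcal{Z}=\cF^\sigma$ holds on the nose, which demands careful bookkeeping of the operator $(1+\Delta_{\phi_\bs})$ and of the normalisation~(\ref{eq:normalization_phi}); and one must then show that its second variation is positive-definite transverse to the trivial directions and that $\mathcal{Z}$ is proper on $\sqrt{-1}\,\Lie(T)$. This is where the holomorphy of the fields in $\Lie(T^c)$ and the self-similar viewpoint genuinely enter, in analogy with~\cite{tz}; by contrast, assertions~(\ref{prop:fm2}) and~(\ref{prop:fm1}) are formal consequences of the moment-map framework already in place.
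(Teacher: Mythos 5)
Parts (\ref{prop:fm2}) and (\ref{prop:fm1}) of your argument are sound. For (\ref{prop:fm1}) you take a genuinely different route from the paper: you observe that $T^c_k\subset \Aut^T_k$ acts on $\cB^T(V)$ on the right through $G^c$ by the same block-scalar matrices as it does on the left, so the fundamental vector fields of $\Lie(T_k)$ die in $Z^T(V)$ and the momenta $\langle \mu^\sigma(\cdot),b\rangle$ are locally constant on the connected space $Z^T(V)$. This works, provided you justify $\iota_{\hat b}\Omega^\sigma_\cB=0$ for $\hat b$ tangent to the $\Aut^T$-orbits via the degeneracy statement of Proposition \ref{prop:symp} (the fundamental field is \emph{not} zero upstairs on $\cB^T(V)$). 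The paper instead proves (\ref{prop:fm1}) by hand: it replaces $\theta_{a,\bs}$ by the mean-zero potential $\widetilde{\theta}_{a,\bs}$, joins two bases by a geodesic, and shows through the computation (\ref{eq:derivativeoftildeF}) that $\frac{d}{dt}\widetilde{\cF}^{\sigma}(a)\equiv 0$. Your version is shorter and is a fair trade.

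The genuine gap is in (\ref{prop:fm3}), and you flag it yourself: you never produce the functional $\mathcal{Z}$ with $d\mathcal{Z}=\cF^\sigma$, and without it neither the strict convexity nor the properness can be checked, so existence and uniqueness of the optimal weight are not actually established. The missing object is completely explicit. Set
$$
\cG(\sigma):=\int_X\frac{\sum_i|s_i\cdot\sigma|^2}{\sum_j|s_j|^2}\,d\mu_{\phi_\bs}.
$$
Diagonalizing $a$ and writing $\tau(t)=\exp(ita)$, one rewrites $\cF^\sigma(a)=-\int_X\theta_{a,\bs}(1+\Delta_{\omega_{\phi_\bs}})e^{\psi_{\sigma,\phi_\bs}}d\mu_{\phi_\bs}$ as $\frac{d}{dt}\big\vert_{t=0}\int_X\frac{\sum_i|\sigma^*s_i|^2}{\sum_j|s_j\cdot\tau(t)|^2}d\mu_{\phi_{\bs\cdot\tau(t)}}$; the change of basis $\bs\mapsto\bs\cdot\tau(t)$ together with the commutativity of $\tau(t)$ and $\sigma$ turns this into $\frac{d}{dt}\big\vert_{t=0}\cG(\sigma\cdot\tau(-t))$. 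Hence the zeros of $\cF^{\bullet}$ are exactly the critical points of $\cG$ on $T^c$. In a basis adapted to the weight decomposition one has $\cG(\exp v)=\sum_i c_i e^{2\lambda_i(v)}$ with $c_i>0$, a positive combination of exponentials of the weights; since the weights sum to zero (trace-freeness) and separate points of $\Lie(T^c)$ for $k$ large, $\cG$ is strictly convex and proper on $\sqrt{-1}\,\Lie(T)$, which is exactly the convexity and properness you were hoping to prove for your undetermined $\mathcal{Z}$. As written, your ``second variation equals a weighted variance'' and ``linear growth of $\psi_{\sigma,\phi_\bs}$'' claims cannot be verified because they refer to a functional that has not been defined. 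One last point, which you correctly anticipate: $\cF^\sigma$ depends on $\sigma$ only through its positive part, so the uniqueness in (\ref{prop:fm3}) must be read modulo the compact torus $T$, i.e., after normalizing $\sigma=\exp(v)$ with $v\in\sqrt{-1}\,\Lie(T)$.
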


\begin{proof}
The statement (\ref{prop:fm2}) follows directly by definition and the computations of Section \ref{subsec:momentmap}.
For an element $a \in \Lie(T^{c})$, define the function $\widetilde{\theta}_{a, \bs}$
\begin{equation}\label{eq:normalizedpotential}
	\iota_{\hat{a}} \omega_{\phi_{\bs}}	= i \bar{\partial} \widetilde{\theta}_{a, \bs},
	\,\,
	\int_{X} \widetilde{\theta}_{a, \bs}d\mu_{\phi_{\bs}}=0.
\end{equation}
It is known that $\theta_{a, \phi_{\bs}} - \widetilde{\theta}_{a, \phi_{\bs}}$ is equal to a constant independent of the choice of $\bs$ (cf. the proof of Lemma 3.4 \cite{futaki04}).
Then, it is sufficient to show that
$$
	\widetilde{\cF}^{ \sigma} (a)= - \int_{X} \widetilde{\theta}_{a, \bs} (1 + \Delta_{ \omega_{\phi_{\bs}}})e^{\psi_{\sigma, \phi_{\bs}}} d\mu_{\phi_{\bs}}
$$
is independent of the choice of $\bs$.
Take another basis $\bs'$ and connect it with $\bs$ by a geodesic $\{\exp(i t \xi) \mid\, t \in [0,1]\}$.
In what follows, we omit subscriptions.
By direct calculation, using the equality $\dot{\psi} = ( \nabla \psi, \nabla \dot{\phi} )$ (cf. Lemma 4.2 \cite{st}):
\begin{eqnarray}
  \nonumber
		- \frac{d}{dt} \widetilde{\cF}^{ \sigma} (a)
	&=&
		\int_{X} \big((1+ \Delta) \dot{\widetilde{\theta}}\big) e^{\psi } d\mu
		+
		\int_{X} (\nabla \bar{\nabla} \dot{\phi}, \nabla \bar{\nabla} \widetilde{\theta}) e^{\psi } d\mu
	\\
  \nonumber
	&& \quad
		+
		\int_{X} \big((1+ \Delta) \dot{\widetilde{\theta}}\big)(\nabla \psi, \nabla \dot{\phi}) e^{\psi } d\mu
		-
		\int_{X} \big((1+ \Delta) \dot{\widetilde{\theta}}\big) e^{\psi } \Delta  \dot{\phi} d\mu
	\\
  \nonumber
	&=&
		\int_{X} \big((1+ \Delta) \dot{\widetilde{\theta}}\big) e^{\psi } d\mu
		-
		\int_{X} (\nabla  \dot{\phi}, \nabla \widetilde{\theta}) e^{\psi } d\mu
		-
		\int_{X} (\nabla \bar{\nabla} \widetilde{\theta}, \nabla \dot{\phi} \bar{\nabla} \psi) e^{\psi } d\mu
	\\
  \label{eq:derivativeoftildeF}
	&=&
		\int_{X} (1+ \Delta) \big( \dot{\widetilde{\theta}} - (\nabla  \dot{\phi}, \nabla \widetilde{\theta}) \big) e^{\psi } d\mu.
\end{eqnarray}
The equality  (\ref{eq:normalizedpotential}) implies that $\dot{\widetilde{\theta}} - (\nabla  \dot{\phi}, \nabla \widetilde{\theta}) $ is constant on $X$.
The second equality in (\ref{eq:normalizedpotential}) that this constant is zero, in fact
$$
	\int_{X} \big( \dot{\widetilde{\theta}} - (\nabla  \dot{\phi}, \nabla \widetilde{\theta}) \big) d\mu
	=
	\frac{d}{dt} \int_{X} \widetilde{\theta} d\mu =0.
$$
Hence, $\frac{d}{dt}\widetilde{\cF}^{ \sigma} (a)$ vanishes identically.
The proof of (\ref{prop:fm1}) is completed.
We will show (\ref{prop:fm3}).
Assume that $a$ is diagonal by a change of basis $\bs$.
We denote $\tau(t)= \exp(it a)$.
By definition,
\begin{eqnarray*}
		\cF^{ \sigma} (a)
	&=&
		- \int_{X} \big((1+ \Delta_{ \omega_{\phi_{\bs}}}) \theta_{a, \bs}\big) e^{\psi_{\sigma, \phi_{\bs}}} d\mu_{\phi_{\bs}}
	\\
	&=&
		\frac{d}{dt}\bigg|_{t=0} \int_{X} \frac{\sum_{i} |\sigma^{*}s_{i}  |^{2}}{\sum_{j}|s_{j} \cdot \tau (t)|^{2}} d\mu_{\phi_{\bs \cdot \tau (t) }}
	\\
	&=&
		\frac{d}{dt}\bigg|_{t=0} \int_{X} \frac{\sum_{i} |s_{i}\cdot \sigma \cdot \tau(-t) |^{2}}{\sum_{j}|s_{j}|^{2}} d\mu_{\phi_{\bs  }},
\end{eqnarray*}
because $\tau (t)$ commutes with $\sigma$.
For a given $\bs$, we define the functional $\cG: T^{c} \to \R$ by
$$
	\cG (\sigma):=  \int_{X} \frac{\sum_{i} |s_{i} \cdot \sigma  |^{2}}{\sum_{j} |s_{j}|^{2}} d \mu_{\bs}.
$$
By direct calculation, we find that $\cG$ is independent of $\bs$.
Obviously, $\cG$ is strongly convex and proper on $T^{c}$.
Hence, $\cG$ has a unique critical point $\sigma_{0} \in T^{c}$ independent of the choice of $\bs$.
Considering the first variation of $\cG$, we find that $\cF^{ \sigma_{0}}$ vanishes on $\Lie(T^{c})$.
The proof is completed.
\end{proof}

\begin{definition}
\label{def:optimal weight}
The weight $\sigma_k\in T^c$ such that $\cF^{\sigma_k}=0$ will be called the {\it $k^{th}$ optimal weight} (or optimal weight).
\end{definition}

\begin{remark}
If $\sigma= \mathrm{id}$, then the vanishing of $\cF^{ \mathrm{id}}$ is equivalent to the condition in Theorem A \cite{ma04-1} called stability of isotropy actions for $(X,L)$.
It is also equivalent to the vanishing of higher Futaki invariants \cite{futaki04}.
\end{remark}

\subsection{Convergence of the optimal weights $\sigma_k$}
Next, we will show that the optimal weights $\sigma_{k}$ approximate the extremal vector field $v_{ex}$.
\begin{proposition}
\label{prop:weight}
Let $\sigma_{k} \in T^{c}_{k}$ be the optimal weight as above.
Take $v_{k} \in \Lie (T^{c})$ so that $\exp(v_{k})= \sigma_{k}$.
Then, we have
 $$
 \lim_{k \rightarrow \infty} k v_{k} = v_{ex}
 $$
 where $v_{ex}$ is the extremal vector field of $(X,L)$ with respect to the torus $T$.
\end{proposition}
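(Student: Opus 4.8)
The plan is to read off $v_{ex}$ from the defining equation $\cF^{\sigma_k}\equiv 0$ of the optimal weight by expanding it asymptotically in $k$ and matching the leading nontrivial order with the Futaki--Mabuchi characterization of the extremal vector field. First I would record an a priori estimate $v_k=O(1/k)$, so that $w_k:=kv_k$ stays in a fixed compact subset of $\Lie(T^c)$. This should follow from Proposition \ref{prop:futakimorita}(\ref{prop:fm3}): the weight $\sigma_k$ is the unique critical point of the strictly convex, proper functional $\cG$ on $T^c_k$, whose linear part at the identity is $\cF^{\mathrm{id}}$. Since $\cF^{\mathrm{id}}$ is a Bergman approximation of the classical Futaki invariant it is $O(1)$, while a uniform lower bound on the Hessian of $\cG$ near $\mathrm{id}$ (equivalently the nondegeneracy of the Futaki--Mabuchi pairing $B(a,b)=\int_X\widetilde{\theta}_a\widetilde{\theta}_b\,d\mu$ on $\Lie(T)$) forces the critical point to lie within distance $O(1/k)$ of $\mathrm{id}$.

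With $w_k$ bounded, I would then expand $\cF^{\sigma_k}(a)=0$ for fixed $a\in\Lie(T^c)$. Using the basis-independence of Proposition \ref{prop:futakimorita}(\ref{prop:fm1}), I evaluate $\cF^{\sigma_k}$ on a basis $\bs$ with $\phi_\bs$ approximating a fixed $T$-invariant metric (for instance $\om_{ex}$ itself, where $S-\uS=\Pi^T_\phi S$ simplifies). Three expansions enter: the Bergman expansion of Section \ref{sec:setup}, giving $\rho_k=k^n+\tfrac12 Sk^{n-1}+\cdots$ together with (\ref{cor:Nk}) and the resulting asymptotics $\theta_{a,\bs}=k\widetilde{\theta}_a+O(1)$ of the quantized momentum, whose average is controlled by $A_1=\tfrac12 S$ and the trace-free condition $\mathrm{Tr}(\rho_k(a))=0$; the first-order expansion $\psi_{\sigma_k,\phi}=\tfrac1k\widetilde{\theta}_{w_k}+\mathrm{const}+O(k^{-2})$ of the potential defined by (\ref{def:psi}), the constant being fixed at order $1/k$ by the normalization (\ref{eq:normalization_phi}) via (\ref{cor:Nk}); and the induced expansion of the weight $(1+\Delta_{\om_{\phi_\bs}})e^{\psi_{\sigma_k,\phi}}$. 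After substitution and integration the $O(k)$ term vanishes automatically because $\int_X\widetilde{\theta}_a\,d\mu=0$, and the surviving $O(1)$ identity should organize into
$$
\int_X\widetilde{\theta}_a\,\widetilde{\theta}_{w_\infty}\,d\mu=\int_X\widetilde{\theta}_a\,(S-\uS)\,d\mu,\qquad\forall\,a\in\Lie(T^c),
$$
where $w_\infty$ denotes any subsequential limit of $w_k$. This is exactly the relation $\widetilde{\theta}_{w_\infty}=\Pi^T_\phi(S-\uS)$ defining the extremal vector field (Definition \ref{def:extremalvectorfield}), whence $w_\infty=v_{ex}$.

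Finally I would conclude by compactness and uniqueness. As $\{w_k\}$ is bounded in the finite-dimensional space $\Lie(T^c)$, every subsequence has a limit $w_\infty$ solving the displayed Futaki--Mabuchi equation; this equation has the unique solution $v_{ex}$, and since every subsequential limit coincides, the whole sequence satisfies $kv_k\to v_{ex}$. The main obstacle is the second step: one must force the $O(1)$ terms to assemble \emph{precisely} into the right-hand side above. This requires tracking with care the interplay of the $\tfrac12 S$ coming from $A_1$ in the Bergman kernel, the order-$1/k$ constant produced by (\ref{eq:normalization_phi}) and (\ref{cor:Nk}), the first variation $\dot\psi=(\nabla\psi,\nabla\dot\phi)$ from \cite[Lemma~4.2]{st} used to differentiate $I^\sigma$, and the Laplacian correction in $(1+\Delta)e^{\psi}$; in particular one must verify that the a priori $\Delta$-type contributions cancel against the momentum and normalization corrections, leaving only the pure $L^2$-pairing of Killing potentials. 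Uniform $C^l$ control of the Bergman remainders, provided by the expansion theorem of Section \ref{sec:setup}, is what legitimizes the passage to the limit.
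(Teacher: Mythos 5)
Your proposal takes a genuinely different route from the paper: you argue by compactness (an a priori bound $v_k=O(1/k)$, extraction of a subsequential limit of $kv_k$, identification of the limit equation, uniqueness), whereas the paper first \emph{constructs} an approximate optimal weight $\widetilde{\sigma}_k=\exp(c\,v_{ex}k^{-1}+\sum_{j\geq 2}\nu_j k^{-j})$ to very high order $q=3n+3$ by an inductive scheme (Lemma \ref{lem:appro_vf}), solving for each $\nu_j$ via the non-degeneracy of the Futaki--Mabuchi pairing, and only then runs a convexity argument along the segment joining $\widetilde{\sigma}_k$ to $\sigma_k$. Your second step (matching the $O(1)$ terms of $\cF^{\sigma_k}(a)=0$ against $\int\widetilde{\theta}_a\widetilde{\theta}_{w_\infty}=\int\widetilde{\theta}_a(S-\uS)$) is essentially the $j=1$ case of the paper's induction and is sound modulo the normalization constants; it uses exactly the expansions (\ref{eq:F_sigma_k_zero}), (\ref{eq:expansionpsi}) and (\ref{eq:theta_futaki}) that the paper assembles.

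The genuine gap is your first step. You assert that strict convexity of $\cG$ plus ``a uniform lower bound on the Hessian of $\cG$ near $\mathrm{id}$'' localizes the critical point within $O(1/k)$ of the identity, but no such uniform bound is established, and this is precisely where all the difficulty of the proposition is concentrated. The only lower bound on the second derivative of $\cG$ available in the paper (estimates (\ref{eq:est1})--(\ref{eq:est3})) comes from the Bergman inequality $\int_X |s_{\alpha_0}|^2/\sum|s_\beta|^2\,d\mu\geq C/N_k$ together with the pigeonhole bound $|b_{\alpha_0}|\geq \max|b_\alpha|/(N_k-1)$, and therefore degenerates like $N_k^{-3}\sim k^{-3n}$ in the natural norm on $\Lie(T^c_k)$. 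With only the $O(1)$ (in fact $O(k^{-1})$, by (\ref{eq:theta_futaki})) smallness of $\cF^{\mathrm{id}}$ at the identity, this Hessian bound gives no useful localization at all; this is exactly why the paper must first manufacture a point where $|\cF^{\widetilde{\sigma}_k}(u)|\leq C_q k^{-q-1}\max_i|b_i|$ with $q\geq 3n+1$ before the product (gradient bound)$\times$(inverse Hessian bound) becomes small. A uniform Hessian bound restricted to directions in the fixed finite-dimensional $\Lie(T^c)$ might be rescuable (e.g.\ via Cauchy--Schwarz, bounding the second derivative below by the $L^2$-norm squared of the quantized momentum and comparing the latter with the honest Hamiltonian), but that comparison must hold uniformly along the unknown segment from $\mathrm{id}$ to $\sigma_k$, and you supply no argument for it. Until this a priori bound is justified, the compactness step, and hence the whole soft argument, does not close. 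A secondary point: even if completed, your argument yields only the leading-order behavior of $v_k$, whereas the paper's construction of the full expansion $v_k=\sum_j\nu_jk^{-j}+O(k^{-q-1})$ is reused later (Proposition \ref{prop:expansionpsi}), so the quantitative approach is not merely a stylistic choice.
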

First we construct an approximate weight for each $k$.
\begin{lemma}
\label{lem:appro_vf}
There exist a constant $c\in \R$ and vector fields $\nu_{j} \in \Lie (T^{c})$$(j \ge 2)$ such that  for any $q \ge 2$ there exists a constant $C_{q}$ with the following property:
let
\begin{equation}\label{eq:app_vf1}
	v_{q}(k):= c v_{ex} k^{-1} + \sum_{j=2}^{q} \nu_{j}k^{-j},
\end{equation}
then we have
\begin{equation}\label{eq:app_vf2}
	|\cF^{ \widetilde{\sigma}_{k}}(u)| < C_{q} k^{-q-1}\max_{i} |b_{i}|
	\quad
	\mbox{for } k \gg 0
\end{equation}
where $\widetilde{\sigma}_{k}:= \exp (v_{q}(k)) \in T^{c}_{k}$ and $u \in \Lie (T^{c})$ is written by
$$
	u= \mathrm{diag}  \frac{1}{2}(b_{1}, \ldots, b_{N_{k}}) \in \Lie (T^{c}_{k})
$$
with respect to $\Hilb_{k}(\omega)$.
\end{lemma}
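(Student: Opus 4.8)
The plan is to construct the approximate weight $v_q(k)$ iteratively in powers of $k^{-1}$, tuning each successive coefficient $\nu_j$ so that the corresponding character $\cF^{\widetilde\sigma_k}$ vanishes to one higher order in $k$. The starting point is the asymptotic expansion of the relevant quantities as $k\to\infty$. Recall from the Bergman kernel expansion that $\rho_k(\phi)=k^n+\tfrac12 S(\phi)k^{n-1}+\cdots$ and $N_k/k^n=\mathrm{Vol}(X)+\tfrac12\uS\,\mathrm{Vol}(X)k^{-1}+\cdots$. First I would expand the defining quantity $\psi_{\widetilde\sigma_k,\phi}$ from (\ref{def:psi}) and its normalization (\ref{eq:normalization_phi}) in powers of $k^{-1}$, using that $\widetilde\sigma_k=\exp(v_q(k))$ with $v_q(k)=O(k^{-1})$. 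Writing $v_q(k)=cv_{ex}k^{-1}+\sum_{j\geq 2}\nu_j k^{-j}$, the leading-order contribution of $\widetilde\sigma_k^*\om_\phi-\om_\phi$ to $\psi$ is governed by the Killing potential $\theta_{v_{ex},\om}$ associated with $cv_{ex}$, so that $e^{\psi_{\widetilde\sigma_k,\phi}}=1+(\text{term of order }k^{-1})+\cdots$.

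\emph{The key computation} is then to insert this expansion into
$$
\cF^{\widetilde\sigma_k}(u)=-\int_X \theta_{u,\bs}\,(1+\Delta_{\om_\phi})e^{\psi_{\widetilde\sigma_k,\phi}}\,d\mu_\phi
$$
and collect terms order by order in $k^{-1}$, tracking the linear dependence on $u=\mathrm{diag}\,\tfrac12(b_1,\dots,b_{N_k})$. The quantity $\theta_{u,\bs}$ is, up to the Bergman expansion, a Killing potential for the vector field induced by $u$, and its pairing against $(1+\Delta_\phi)e^{\psi}d\mu_\phi$ produces — at leading order — precisely the modified Futaki-type invariant relative to $T$. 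The role of the constant $c$ and of $cv_{ex}$ is to make the $O(k^{-1})$ coefficient of $\cF^{\widetilde\sigma_k}(u)$ vanish: this is where the extremal vector field must enter, since $v_{ex}$ is characterized (Definition \ref{def:extremalvectorfield}) as the gradient of the projection $\Pi^T_\phi S(\phi)$, i.e.\ exactly the datum that cancels the $T$-equivariant part of the scalar curvature against the Killing potentials. I would fix $c$ by this leading-order cancellation. Having done so, the next coefficient $\nu_2$ is determined by demanding the $O(k^{-2})$ term vanish, and so on: because $\cF^{\widetilde\sigma}$ depends \emph{linearly} on $\psi_{\widetilde\sigma,\phi}$ to leading order and the dependence of $\psi$ on the new coefficient $\nu_j$ at order $k^{-j}$ is, modulo lower-order corrections, through the single Killing potential $\theta_{\nu_j,\om}$, each equation for $\nu_j$ is a solvable linear (in fact affine) condition on $\Lie(T^c)$, solvable precisely because the pairing $(f,g)\mapsto\int_X fg\,d\mu$ from (\ref{eq:bilinearfutakimabuchi}) is nondegenerate on the space of Killing potentials $P^T_\phi$. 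Iterating up to order $q$ yields $v_q(k)$ with $\cF^{\widetilde\sigma_k}(u)=O(k^{-q-1})$, and the factor $\max_i|b_i|$ records the obvious linear bound in $u$.

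\textbf{The main obstacle} will be controlling the error terms uniformly in $k$ so as to obtain the clean bound (\ref{eq:app_vf2}) with an explicit constant $C_q$ independent of $k$ and of $u$. The difficulty is twofold. First, the expansion of $e^{\psi_{\widetilde\sigma_k,\phi}}$ and of the normalization integral (\ref{eq:normalization_phi}) must be carried out with $C^l$-bounds on the remainders, which requires invoking the Bergman expansion in its strong form (the $C_{l,R}$ estimates) rather than merely its pointwise leading term, since $\Delta_\phi$ differentiates the expansion. Second, the quantity $\theta_{u,\bs}$ is built from the basis $\bs$ orthonormal for $\Hilb_k(\om)$, and I must show that the \emph{truncation error}, after choosing $\nu_2,\dots,\nu_q$, is genuinely of order $k^{-q-1}$ with the dependence on $u$ entering only through $\max_i|b_i|$ — this forces one to verify that the higher-order terms neglected at each stage recombine into a remainder that is still linear and bounded in $u$. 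The cleanest route is to organize the whole calculation as a single formal power series identity in $k^{-1}$ with coefficients that are smooth functionals on $\cH^T$, prove termwise the cancellations using the definition of $v_{ex}$ and the nondegeneracy of (\ref{eq:bilinearfutakimabuchi}), and only at the end invoke the uniform $C_{l,R}$-estimates of the Bergman expansion to convert the formal identity into the quantitative bound (\ref{eq:app_vf2}).
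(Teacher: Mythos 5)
Your proposal follows essentially the same route as the paper: expand $e^{\psi_{\widetilde{\sigma}_k,\omega}}$ in powers of $k^{-1}$ and cancel the coefficients of $\cF^{\widetilde{\sigma}_k}(u)$ order by order, with the $k^{-1}$ term forcing $\nu_1\propto v_{ex}$ via the Futaki invariant and the higher $\nu_j$ determined by non-degeneracy of the Futaki--Mabuchi pairing, then bound the remainder linearly in $u$. The only notable difference is in the final uniformity step: where you propose to invoke the $C^l$ Bergman estimates, the paper instead observes that the coefficient functionals in the expansion (including the higher Futaki invariants $F_j$ arising from the expansion of $\int_X\theta_{u,\omega_k}\,d\mu_{\omega_k}$) are independent of the choice of metric, hence are fixed linear maps on the finite-dimensional $\Lie(T^c)$ and automatically bounded by $\max_i|b_i|$.
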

\begin{proof}
We denote by $\omega_{k}$ the K\"ahler form $\omega_{\FS_{k}\circ \Hilb_{k}(\omega)}$ through the proof.
For a given $q$ and a vector field as in (\ref{eq:app_vf1})
$$
	v_{q}(k)= \sum_{j=1}^{q} \nu_{j}k^{-j}
$$
let
$$
	\widetilde{\sigma}_{k}(t):= \exp (tv_{q}(k))
$$
and $\widetilde{\sigma}_{k}:= \exp (1 \cdot v_{q}(k))$.
Take any $u\in \Lie (T^{c})$ and let $\tau_k (t):= \exp (t u) \in T^{c}_{k}$.
From (\ref{eq:normalization_phi}), (\ref{eq:normalizedpotential}) and that $\theta_{u,\omega_{k}}-\widetilde{\theta}_{u,\omega_{k}}$ is constant, we find that
\begin{eqnarray}
	\nonumber
		\mathcal{F}^{\widetilde{\sigma}_{k}}(u)-\widetilde{\mathcal{F}}^{\widetilde{\sigma}_{k}}(u)
	&=&
		-\int_{X} (\theta_{u,\omega_{k}}-\widetilde{\theta}_{u,\omega_{k}})e^{\psi_{ \widetilde{\sigma}_{k}, \omega_{k}}} d\mu_{ \omega_{k}}
	\\
	\nonumber
	&=&
		-\frac{N_k}{k^n}\frac{1}{\vol(X)}\int_{X} (\theta_{u,\omega_{k}}-\widetilde{\theta}_{u,\omega_{k}})d\mu_{ \omega_{k}}
	\\
	\label{eq:F_sigma_k_zero}
	&=&
	-\frac{N_k}{k^n}\frac{1}{\vol(X)}\int_X \theta_{u,\omega_{k}}d\mu_{ \omega_{k}}
\end{eqnarray}
for each $k$. We now give some expansions for $\widetilde{\mathcal{F}}^{\widetilde{\sigma}_{k}}(u)$ and $\frac{1}{\vol(X)}\int_X \theta_{u,\omega_{k}}d\mu_{ \omega_{k}}$.
Note that for any $1$-parameter subgroup $\lbrace\sigma(t)=\exp(t\nu)\rbrace\subset T_k^c $, we have an expansion
\begin{equation}
 \label{eq:expansion sigma(t)omega}
\sigma(t)^*\om_k=\om_k+ i\del\delb\big(\sum_{j=1}^{q}\widetilde{\theta}^j_{\nu,\omega_{k}}t^{j}\big)+\mathcal{O}(t^{q+1})
\end{equation}
with
$$
	\int_{X} \widetilde{\theta}^j_{\nu, \omega_{k}}d\mu_{\omega_{k}}=0\:,\: j=1\ldots q.
$$
As the vector fields $\nu_j$ commute, so do the automorphisms $\exp(\nu_{j}k^{-j})$.
Thus, by definition of $v_{q}(k)$, and using (\ref{eq:expansion sigma(t)omega}),
\begin{eqnarray}
	\nonumber
		i\partial\bar\partial\psi_{\widetilde{\sigma}_{k}, \omega_{k}}
	&=&
		\bigg(\frac{1}{k}\bigg)\frac{(\widetilde{\sigma}_{k}(1/k))^*\omega_{k}-\omega_{k}}{k^{-1}}
	\\
	\label{eq:expansionpsi}
	&=&
		i\partial\bar\partial
		\big(
			 \sum_{j=1}^{q}(\widetilde{\theta}_{\nu_{j},\omega_{k}}+\theta'_j)k^{-j}
		\big)
		+\mathcal{O}(k^{-q-1}),
\end{eqnarray}
where
$$
	\iota_{\nu_{j}} \omega_{k}	= i \bar{\partial} \widetilde{\theta}_{\nu_{j}, \omega_{k}},
	\,\,
	\int_{X} \widetilde{\theta}_{\nu_{j}, \omega_{k}}d\mu_{\omega_{k}}=0
$$
and the functions $\theta'_j$, $j=1\ldots q$, only depend on $\om_k$ and the vector fields $(\nu_1,\cdots,\nu_{j-1} )$.
Then, we have
\begin{equation}
	\label{eq:tilde_F_product}
		\widetilde{\mathcal{F}}^{\widetilde{\sigma}_{k}}(u)
	=
		-\int_X
		\widetilde{\theta}_{u, \omega_{k}}
		(1+ \Delta_{ \omega_{k}})
		\big(1+
		\sum_{j=1}^{q}(\widetilde{\theta}_{\nu_{j},\omega_{k}}+P_j(\widetilde{\theta}_{\nu_{i},\omega_{k}},\theta'_i))k^{-j}
		\big)
		d\mu_{ \omega_{k}}
		+\mathcal{O}(k^{-q-1})
\end{equation}
with $P_j(\widetilde{\theta}_{\nu_{i},\omega_{k}},\theta'_i)$ polynomial in $(\widetilde{\theta}_{\nu_{1},\omega_{k}},\cdots,\widetilde{\theta}_{\nu_{j-1},\omega_{k}},\theta'_1,\cdots,\theta'_{j-1})$.
  Note that the coefficients of $k^{-j}\, (j\ge 1)$ in (\ref{eq:tilde_F_product}) are independent of the choice of $\omega$.
  This follows from the calculation (\ref{eq:derivativeoftildeF}).
  In fact, denoting the expansion
  $$
    \exp(\psi_{ \sigma_k, \omega_k})
    =
    1+
    \sum_{j \ge 1} \widetilde{\Theta}_j k^{-j},
  $$
  then the calculation (\ref{eq:derivativeoftildeF}) tells us
  \begin{eqnarray*}
      \frac{d}{ds} \int_X
      \widetilde{\theta}_{u, \omega_{k}}(1+ \Delta_{ \omega_{k}})
      \widetilde{\Theta}_j d\mu_{ \omega_k}
    &=&
      \int_X (1+ \Delta_k)\big( \frac{\partial \widetilde{\theta}_{u, \omega_{k}}}{ \partial s} - (\nabla \frac{\partial \phi}{\partial s}, \nabla \widetilde{\theta}_{u, \omega_{k}})\big)
      \widetilde{\Theta}_j d\mu_{ \omega_k}
    \\
    &=&
      0
  \end{eqnarray*}
  where $s$ parametrizes a perturbation of the K\"ahler form.
On the other hand,
according to Proposition 2.2.2 in \cite{Don02},
we find that
\begin{equation}\label{eq:theta_futaki}
	\frac{1}{\vol(X)}\int_X \theta_{u,\omega_{k}}d\mu_{ \omega_{k}}
	= F_{1}(u) k^{-1} + F_{2}(u)k^{-2} + \cdots + F_{q}(u)k^{-q} + \cO(k^{-q-1})
\end{equation}
where $F_{j}$ is a Lie algebra homomorphism on $\Lie(T^{c})$ to $\R$.
In particular, $F_{1}$ is equal, up to a multiplicative constant, to the original Futaki invariant \cite{fut}
$$
	\mathrm{Fut}(u)=\int_X \widetilde{\theta}_{u,\omega}
	S(\omega)d\mu_{ \omega}.
$$
From (\ref{eq:F_sigma_k_zero}), (\ref{eq:tilde_F_product}) and (\ref{eq:theta_futaki}), we have
\begin{eqnarray}
  \nonumber
		\mathcal{F}^{\widetilde{\sigma}_{k}}(u)
	&=&
		-\bigg(\frac{N_k}{k^n}F_{1}(u)+ \int_{X} \widetilde{\theta}_{u, \omega_{k}}\widetilde{\theta}_{\nu_{1}, \omega_{k}} d\mu_{ \omega_{k}}\bigg)k^{-1}
	\\
  \nonumber
	&&
		-\sum_{j=2}^{q}\bigg(\frac{N_k}{k^n}F_{j}(u)+ \int_{X} \widetilde{\theta}_{u, \omega_{k}}(\widetilde{\theta}_{\nu_{j}, \omega_{k}}+P_j(\widetilde{\theta}_{\nu_{i},\omega_{k}},\theta'_i))  d\mu_{ \omega_{k}}\bigg)k^{-j}
	\\
  \nonumber
	&&
		-\sum_{j=2}^{q}\bigg(\int_{X} \widetilde{\theta}_{u, \omega_{k}} (\Delta_{ \omega_{k}}(\widetilde{\theta}_{\nu_{j-1}, \omega_{k}}+P_{j-1}(\widetilde{\theta}_{\nu_{i},\omega_{k}},\theta'_i))) d\mu_{ \omega_{k}}\bigg)k^{-j}
	\\
  \label{eq:expansion_F_tilde_sigma}
	&&
		\quad
		- \sum_{j=q+1}^{\infty}\frac{N_k}{k^n}F_{j}(u) k^{-j} +\mathcal{O}(k^{-q-1}).
\end{eqnarray}
Recall that for holomorphy potentials $\widetilde{\theta}_{1}, \widetilde{\theta}_{2}$ under the normalization
$$
	\int_{X} \widetilde{\theta}_{i} d \mu=0,
$$
the bilinear form (\ref{eq:bilinearfutakimabuchi}) is non-degenerate (see \cite{fm}).
Then, we can construct $\nu_{j}$ inductively in $j$ so that for $j=1$
$$
	\frac{N_k}{k^n}F_{1}+ \int_{X} \widetilde{\theta}_{u, \omega_{k}}\widetilde{\theta}_{\nu_{1}, \omega_{k}} d\mu_{ \omega_{k}}=0
$$
and for $2 \le j \le q-1$
\begin{eqnarray}
\nonumber
    \int_{X} \widetilde{\theta}_{u, \omega_{k}}\widetilde{\theta}_{\nu_{j}, \omega_{k}} d\mu_{ \omega_{k}}
     &=&
    - \frac{N_k}{k^n}F_{j}
    \\
    \nonumber
    & &
    -    \int_{X} \widetilde{\theta}_{u, \omega_{k}}  P_j(\widetilde{\theta}_{\nu_{i},\omega_{k}},\theta'_i)
        d\mu_{ \omega_{k}}
     \\
    \nonumber
    & &
     -    \int_{X} \widetilde{\theta}_{u, \omega_{k}}  \Delta_{ \omega_{k}}(\widetilde{\theta}_{\nu_{j-1}, \omega_{k}}+P_{j-1}(\widetilde{\theta}_{\nu_{i},\omega_{k}},\theta'_i))
    d\mu_{ \omega_{k}}.
\end{eqnarray}
In particular, by definition of extremal vector fields, $\nu_{1}$ is equal to the extremal vector field.
We remark that the right hand side in the above equality are independent of the choice of $\omega$ due to \cite{fm2}
and the independency of the $k^{-j}\, (j\ge 1)$ coefficients in (\ref{eq:tilde_F_product}) .
Finally, we prove the inequality (\ref{eq:app_vf2}).
The construction of $\nu_j$ implies
\begin{equation}
 \label{eq:rest of Ftilde}
	|\mathcal{F}^{\widetilde{\sigma}_{k}}(u)|
	=
	\bigg|\sum_{j=q+1}^{ \infty} \bigg(\frac{N_k}{k^n}F_{j}(u)
  + \int_X \widetilde{\theta}_{u, \omega_{k}} (\widetilde{\Theta}_j + \Delta_k \widetilde{\Theta}_{j-1})d\mu_{ \omega_k}\bigg)k^{-j}\bigg| .
\end{equation}
As we have seen, the coefficients of $k^{-j}$ in the right hand side of (\ref{eq:rest of Ftilde})
are independent of $\omega$, i.e., Lie algebra homomorphisms on the finite dimensional $\Lie (T^c_k)$.
Then there exists a constant $C$ such that
$$
  \bigg|
  \frac{N_k}{k^n}F_{j}(u)
  + \int_X \widetilde{\theta}_{u, \omega_{k}} (\widetilde{\Theta}_j + \Delta_k \widetilde{\Theta}_{j-1})d\mu_{ \omega_k}
  \bigg|
  \le C\max_i|b_i|
$$
for large enough $k$.
Hence we prove (\ref{eq:app_vf2}).
The proof is completed.
\end{proof}
\begin{proof}[Proof of Proposition \ref{prop:weight}]
We perturb an approximate weight from Lemma \ref{lem:appro_vf} to the one we desired.
Let $v(k)$ be the vector field in Lemma \ref{lem:appro_vf} for  $q= 3n+3 \ge 3\log_{k}N_k+2$.
Then, we connect $\sigma_k$ to $\widetilde{\sigma}_{k}$ by a path
$$
	\zeta_k (t)=\exp(\mathrm{diag} ( \frac{1}{2}b_{i}t))
$$
for some basis $\bs$ for $\Hilb_{k}(\omega)$.
As seen in the proof of Proposition \ref{prop:futakimorita} (\ref{prop:fm2}),
\begin{eqnarray}
	\nonumber
		\mathcal{F}^{\sigma_{k}}(v (k))
	&=&
		\int^1_0 dt \int_X \frac{\sum _{\alpha} |b _{\alpha}|^2 e ^{- tb _{\alpha}}|s _{\alpha}|^2}
		{\sum _{\beta} |s _{\beta}|^2} d\mu_{ \omega_{k}}
	\\
	\label{eq:est1}
	&\ge&
		|b _{\alpha_0}|^2 \int_X \frac{|s _{\alpha_0}|^2}{\sum _{\beta} |s _{\beta}|^2} d\mu_{ \omega_{k}}
	\\
	\label{eq:est2}
	&\ge&
		 \frac{C}{N_k}|b _{\alpha_0}|^2
	\\
	\label{eq:est3}
	&\ge&
		\frac{C}{(N_k)^3} \max |b _{\alpha}|^2
\end{eqnarray}
for sufficiently large $k$.
In (\ref{eq:est1}), the subscript $\alpha_0$ is determined so that $b _{\alpha_0}= \inf b _{\alpha} <0$.
The inequality (\ref{eq:est2}) follows from
\begin{eqnarray*}
		\int_X \frac{|s _{\alpha_0}|^2}{\sum _{\beta} |s _{\beta}|^2}
		d\mu_{ \omega_{k}}
	&\ge&
		C \int_X  \frac{|s _{\alpha_0}|^2_{h^k}}{\sum _{\beta} |s _{\beta}|^2_{h^k}}
		d\mu_{ \omega}
	\\
	&\ge&
		C \frac{1}{\max_X \rho_k (\omega)}
	\\
	&\ge&
		C \frac{1}{ \max_X (k^n + A_1 (\omega) k ^{n-1}+ \cdots)}
	\\
	&\ge&
		\frac{C}{N_k}.
\end{eqnarray*}
Recall that $\rho_k (\omega)$ denotes by the $k$-th Bergman function of $\omega$.
In the first line, we use $\omega (k) \to \omega$ as $k \to \infty$.
In the third line, we use the fact that $\max_X |A_i (\omega)|$ is independent of $\omega$.
The inequality (\ref{eq:est3}) follows from
$$
	|b _{\alpha_0}| \ge \frac{1}{N_k-1}  \max |b _{\alpha}|,
$$
because $\sum _{\alpha} b _{\alpha} =0$.
From (\ref{eq:app_vf2}) and (\ref{eq:est3}), we have
\begin{equation}\label{eq:estimateb}
	\max _{\alpha} |b _{\alpha}| < Ck ^{-2}.
\end{equation}
This implies that
$$
	k (v(k) - v_k) \to 0
$$
as $k \to \infty$.
The proof of Proposition \ref{prop:weight} is completed.
\end{proof}

\section{Proof of Theorem \ref{thm:A}}
\label{sec:proof}

Start with a polarized $(X,L)$ with an extremal metric $\om_{ex}\in 2 \pi c_1(L)$.
Here $\sigma_k$ denotes the optimal weight as defined in Definition \ref{def:optimal weight}.
The key proposition that we want to use is the following (see \cite{Don01} and \cite{sz10}):

\begin{proposition}
\label{prop:Dmmap}
Let $(Z,\om_Z)$ be a (finite dimensional) K\"ahler manifold with a Hamiltonian $G$-action,
for a compact Lie group $G$.
Denote by $\mu$ the moment map for this action. For each $x\in Z$, let $G_x$ be the stabilizer of $x$ in $G$,
and let $\Lambda_x^{-1}$ denote the operator norm of
$$
\sigma_x^*\circ \sigma_x : \Lie(G_x)^\perp \rightarrow \Lie(G_x)^\perp
$$
where $\sigma_x : \Lie(G_x)^\perp \rightarrow T_xZ$ is the infinitesimal action of $G$ at $x$ and the orthogonal
complement is computed with respect to an invariant scalar product on $\Lie(G)$.
Let $x_0\in Z $ with $ \mu(x_{0}) \in \Lie(G_{x_0})^\perp$.
Assume that there are real numbers $\lambda,\delta$ such that
\begin{itemize}
\item $\Lambda_x\leq \lambda$
for all $x=e^{i\xi}x_0$ with $\vert\vert \xi \vert \vert < \delta$, and
\item $\lambda \vert \vert \mu(x_0) \vert \vert < \delta$.
\end{itemize}
Then there exists $y=e^{i\eta}x_0$ such that $\mu(y)=0$, with $\vert \vert \eta\vert \vert \leq \lambda \vert\vert \mu(x_0)\vert\vert$.
\end{proposition}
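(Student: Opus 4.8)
The plan is to produce $y$ as the limit of the negative gradient flow of $f:=\frac{1}{2}\|\mu\|^2$ restricted to the imaginary directions through $x_0$, and to bound the length of that flow line by $\lambda\|\mu(x_0)\|$. Concretely, I would follow the path $x(t)=e^{i\xi(t)}x_0$ governed by $\dot\xi=-\mu(x(t))$ with $\xi(0)=0$, whose velocity is $\dot x=-J\,\sigma_x(\mu(x))$, the negative gradient of $f$. Here one uses the defining moment map identity $d\langle\mu,a\rangle=\iota_{\hat a}\om_Z$ together with $\om_Z(\cdot,\cdot)=g(J\cdot,\cdot)$ to identify $\grad f=J\sigma_x(\mu)$, so that the flow indeed stays in the $e^{i\Lie(G)}$-family through $x_0$.

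First I would record the two elementary identities along this path. Differentiating gives $\frac{d}{dt}f=-\|\grad f\|^2=-\|\sigma_x(\mu(x))\|^2\le 0$, so $\|\mu(x(t))\|$ is non-increasing, while the length element is $\|\dot x\|=\|\sigma_x(\mu(x))\|$, equivalently $\|\dot\xi\|=\|\mu(x(t))\|$. Next I would verify that the perpendicularity $\mu(x(t))\in\Lie(G_{x(t)})^{\perp}$ is preserved: it holds at $t=0$ by hypothesis, the identity component of the stabilizer is unchanged along a flow line inside a $G^c$-orbit, and for $\zeta\in\Lie(G_x)$ one has $\sigma_x(\zeta)=0$, whence $\frac{d}{dt}\langle\mu,\zeta\rangle=0$. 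This perpendicularity is exactly what lets me insert $a=\mu(x)\in\Lie(G_x)^{\perp}$ into the coercivity estimate coming from $\Lambda_x$: since $\Lambda_x$ controls the inverse of $\sigma_x$ on $\Lie(G_x)^{\perp}$, the hypothesis $\Lambda_x\le\lambda$ furnishes $\|\sigma_x(a)\|^2\ge\lambda^{-1}\|a\|^2$ there.

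With these in hand the analytic core is a single differential inequality. As long as $x(t)$ remains in the region $\{\|\xi\|<\delta\}$ where $\Lambda_x\le\lambda$, the identities combine to give $\frac{d}{dt}\|\mu\|^2\le -2\lambda^{-1}\|\mu\|^2$, hence $\|\mu(x(t))\|\le e^{-t/\lambda}\|\mu(x_0)\|$; integrating the length element then bounds the total length by $\int_0^{\infty}\|\mu(x(t))\|\,dt\le\lambda\|\mu(x_0)\|$. Consequently $\xi(t)$ converges to some $\eta$ with $\|\eta\|\le\lambda\|\mu(x_0)\|$, and $\|\mu(x(t))\|\to 0$, so the limit point $y=e^{i\eta}x_0$ satisfies $\mu(y)=0$.

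The step that requires real care, and which I expect to be the main obstacle, is justifying a priori that the flow never exits the ball $\{\|\xi\|<\delta\}$, so that the estimate $\Lambda_x\le\lambda$ is legitimately available throughout. I would close this by a continuity (bootstrap) argument: let $T$ be the supremum of times with $\|\xi(s)\|<\delta$ for all $s\in[0,T)$; on $[0,T)$ the length bound already established yields $\|\xi(t)\|\le\lambda\|\mu(x_0)\|$, which is strictly below $\delta$ precisely because of the hypothesis $\lambda\|\mu(x_0)\|<\delta$. Hence $\xi$ cannot approach the boundary and $T=\infty$, which feeds back to make the global estimates valid. A secondary technical nuisance is the non-commutativity of the families $e^{i\xi(t)}$, so that $\dot x=-J\sigma_x(\mu)$ holds only after correctly transporting the generator in the moving frame; this is routine but must be handled so that the identity $\frac{d}{dt}f=-\|\grad f\|^2$ is exact rather than merely approximate.
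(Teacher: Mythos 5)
The paper itself gives no proof of this proposition---it is quoted from \cite{Don01} and \cite{sz10}---and your argument is exactly the one in those references: run the negative gradient flow of $\tfrac12\|\mu\|^2$ along the $G^c$-orbit, combine $\frac{d}{dt}\|\mu\|^2=-2\|\sigma_x(\mu)\|^2$ with the coercivity $\|\sigma_x(a)\|^2\ge\lambda^{-1}\|a\|^2$ on $\Lie(G_x)^\perp$ to get exponential decay, bound the length of the path by $\lambda\|\mu(x_0)\|$, and bootstrap to keep the flow inside the $\delta$-ball. The one imprecise point is your justification of the persistence of $\mu(x(t))\in\Lie(G_{x(t)})^\perp$: the identity component of the $G$-stabilizer is \emph{not} in general constant along a $G^c$-orbit; the correct reason is that the flow is $G$-equivariant, so $G_{x_0}$ fixes the entire flow line, hence $\sigma_{x(t)}(\zeta)=0$ and $\frac{d}{dt}\langle\mu(x(t)),\zeta\rangle=0$ for every $\zeta\in\Lie(G_{x_0})$, which suffices provided $\Lie(G_{x(t)})$ does not jump up---as is the case in the application, where $\Lie(G_\bs)\simeq\Lie(T)$ for every $\bs\in Z^T(V_k)$.
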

\noindent We want to use Proposition \ref{prop:Dmmap} with the moment map setting of Section \ref{sec:mmap} to build $\sigma_k$-balanced
metrics. This will rely on two steps. In Section \ref{sec:control of derivative} we will estimate the norm of $\Lambda_x$ in our setting.
Then in Section \ref{sec:construction} we will construct an ``almost $\sigma_k$-balanced metric'' for $k$ large
enough.

\subsection{Control of the derivative of $\mu^\sigma$}
\label{sec:control of derivative}
We specialize to our setting, considering $(Z,\om_Z)=(Z^T(V_k),\Om_Z^{\sigma_k})$ and $G=G_k$.
To state the main result of this section, we introduce some definitions and notations.

\noindent A basis $\bs\in\cB^T(V_k)$ is said to have $R$-bounded geometry, $R>0$, if the corresponding metric
$\om_{\phi_\bs}$ satisfies
 \begin{itemize}
  \item[i)] $\om_{\phi_\bs} > R^{-1}k\om_0$
  \item[ii)] $\vert\vert \om_{\phi_\bs} - k\om_0\vert\vert_{C^r(k\om_0)} < R$
 \end{itemize}
for $C^r(k\om_0)$ the $C^r$-norm defined by $k\om_0$.
For a basis $\bs\in\cB^T(V_k)$, we set $\Lambda_\bs$ as in Proposition \ref{prop:Dmmap}. 
Last, we denote the operator norm of a matrix $A$ by $\|A\|_{op}$:
$$
	\|A\|_{ \mathrm{op}}:= \max \frac{|A(\xi)|}{|\xi|}.
$$
The aim of this section is to prove:
\begin{proposition}
\label{prop:controldmu}
 For any $R>1$, there are positive constants $C$ and $\ep<\frac{1}{10}$ such that, for any $k$, if the basis $\bs\in\cB^T(V_k)$
 has $R$-bounded geometry, and if $\vert\vert\mu^{\sigma_k}(\bs)\vert\vert_{ \mathrm{op}}<\ep$, then
\begin{equation}\label{eq:upperboundLambda}
 \Lambda_\bs \leq C  k^2.
\end{equation}
\end{proposition}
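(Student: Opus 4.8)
The plan is to identify $\sigma_\bs^*\circ\sigma_\bs$ with the linearisation of the moment map $\mu^{\sigma_k}$ along the complexified $G_k$-orbit, to make it explicit through the second-variation formula already used in Proposition \ref{prop:symp}, and then to reduce the twisted estimate to the scalar derivative estimate of Donaldson and Phong--Sturm \cite{Don01,ps04}. Fix $\bs\in\cB^T(V_k)$ of $R$-bounded geometry with $\|\mu^{\sigma_k}(\bs)\|_{\mathrm{op}}<\ep$, and for $a\in\Lie(G_\bs)^\perp$ write $\hat a$ for the induced vector field on $Z^T(V_k)$. Since $(Z^T(V_k),\Om_Z^{\sigma_k})$ is Kähler (Proposition \ref{prop:symp}) with moment map $\mu^{\sigma_k}$ (Proposition \ref{prop:mmap}), the quantity to control is
$$\langle\sigma_\bs^*\sigma_\bs\,a,a\rangle=g_Z^{\sigma_k}(\hat a,\hat a)=\Om_Z^{\sigma_k}(\hat a,J_Z\hat a),$$
and (\ref{eq:upperboundLambda}) is exactly the lower bound $g_Z^{\sigma_k}(\hat a,\hat a)\geq C^{-1}k^{-2}\|a\|^2$ for all such $a$. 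Applying \cite[Lemma 3.3.1]{st} to the paths $\bs\cdot\exp(ta)$ and $\bs\cdot\exp(tJ_\cB a)$, exactly as in the proof of Proposition \ref{prop:symp}, expresses $g_Z^{\sigma_k}(\hat a,\hat a)$ as an explicit positive weighted quadratic form in the Hamiltonian $\theta_{a,\bs}$, with weight $e^{\psi_{\sigma_k,\phi_\bs}}$, degenerate precisely along $\Lie(G_\bs)$.

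Second, I would neutralise the twist. By Proposition \ref{prop:weight} the optimal weight satisfies $kv_k\to v_{ex}$, so $v_k=O(k^{-1})$ and $\sigma_k\to\Id$; together with $R$-bounded geometry this bounds $\psi_{\sigma_k,\phi_\bs}$ and its derivatives uniformly in $k$, so that $e^{\psi_{\sigma_k,\phi_\bs}}$ is pinched between two positive constants depending only on $R$. Hence the weighted quadratic form of the first step is uniformly comparable to the untwisted quadratic form occurring in \cite{Don01,ps04}. The hypothesis $\|\mu^{\sigma_k}(\bs)\|_{\mathrm{op}}<\ep$ is used here to ensure that $\bs$ is almost $\Hilb_k(\phi_\bs)$-orthonormal, so that $\rho_k(\phi_\bs)=k^n+O(k^{n-1})$ stays uniformly comparable to $k^n$ across $X$; this, with $R$-bounded geometry, makes all the Bergman-kernel constants below independent of $k$.

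Third, with the twist removed the claim reduces to Donaldson's derivative estimate (see also \cite{ps04,sz10}). Two normalisations meet here: on $\Lie(G_k)$ we use the Hilbert--Schmidt product, while the comparison with the $L^2$-norm of the potential carries the Bergman factor $\rho_k\approx k^n$, so that $\|a\|^2$ and $k^n\|\theta_{a,\bs}\|^2_{L^2}$ agree up to a degeneration governed by the off-diagonal Bergman kernel. Combining the Dirichlet lower bound $\int_X|\nabla\theta_{a,\bs}|^2_{\om_{\phi_\bs}}\,d\mu_{\phi_\bs}\gtrsim\|\theta_{a,\bs}\|^2_{L^2}$, valid on the orthogonal complement of the holomorphy potentials --- which corresponds under $a\mapsto\theta_{a,\bs}$ to $\Lie(G_\bs)^\perp$ --- with this Bergman comparison, and tracking the \emph{worst} degeneration over all of $\Lie(G_\bs)^\perp$, yields $g_Z^{\sigma_k}(\hat a,\hat a)\geq C^{-1}k^{-2}\|a\|^2$, i.e.\ (\ref{eq:upperboundLambda}).

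The hard part is this last uniform lower bound. On generic directions the Dirichlet energy dominates and one even gets a $k$-independent bound; the loss to $k^{-2}$ comes entirely from directions in which $\theta_{a,\bs}$ is \emph{almost} a holomorphy potential, where the Dirichlet term is small and simultaneously the comparison between $\|a\|^2$ and $k^n\|\theta_{a,\bs}\|^2_{L^2}$ degrades, and it is the precise interplay of these two effects --- quantified through the Bergman-kernel asymptotics and the $R$-bounded geometry --- that produces the power $k^{2}$ and no better. Two points specific to the present relative, twisted setting require care: the whole estimate must be run compatibly with the block decomposition $G_k=S(\Pi_\chi U(N_k^\chi))$ coming from $V_k=\bigoplus_\chi V_k(\chi)$, and one must verify that the genuinely holomorphic directions --- the image of $\Lie(\Aut_k^T)$ --- are exactly the degenerate subspace already discarded in passing to $\Lie(G_\bs)^\perp$, so that no spurious eigenvalue smaller than $C^{-1}k^{-2}$ survives.
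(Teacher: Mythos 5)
Your proposal follows essentially the same route as the paper's proof: identify $\Lambda_\bs$ with the metric $g_{Z}^{\sigma_k}$ via the second variation of $I^{\sigma_k}\circ\phi$ (the paper's Lemma \ref{lem:gZ}, resting on \cite{st}), neutralise the twist by uniform control of the optimal weight coming from Proposition \ref{prop:weight} (the paper's Lemma \ref{lem:boundedweights}), and then run the Donaldson/Phong--Sturm machinery --- the Poincar\'e-type comparison between $\|\xi\|^2$ and $\|\hat\xi\|^2$ and the $\bar\partial$-estimate on $\Lie(T)^{\perp}$, each step costing one factor of $k$. The one imprecision is the claim that $\sigma_k\to\Id$: the induced matrix on $V_k$ does not tend to the identity (the weights on $L^{\otimes k}$ scale like $k\theta_{v_k}\to\theta_{v_{ex}}$), only uniform two-sided bounds hold, but that is exactly what is needed to compare the twisted and untwisted Fubini--Study metrics and to bound the bisectional curvature entering the $\bar\partial$-estimate, so the argument is unaffected.
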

\noindent
This is a generalization of Theorem 21 in \cite{Don01} and Theorem 2 in \cite{ps04}.
The estimate  (\ref{eq:upperboundLambda}) in Proposition \ref{prop:controldmu} is equivalent to
$$
\forall \xi\in \Lie(G_\bs)^\perp\;,\; g_{Z_k}^{\sigma_k}(\xi,\xi) \geq C k^{-2} \vert\vert \xi \vert \vert^2
$$
where $G_\bs$ is the stabilizer of $\bs$ in $G_k$.
Note that for each $\bs\in Z^T(V_k)$,
$$
	G_\bs^c=\Aut^T_k
$$
and
$$
	\Aut_k^T\simeq  T^c
$$
as $T$ is a maximal torus. Thus for any $\bs\in Z^T(V_k)$,
$$
	\Lie(G_\bs)^\perp\simeq \Lie(T)^\perp,
$$
and (\ref{eq:upperboundLambda}) is equivalent to
\begin{equation}
 \label{eq:upperbound reformulation}
\forall \xi\in \Lie(T)^\perp\;,\; g_{Z_k}^{\sigma_k}(\xi,\xi) \geq C k^{-2} \vert\vert \xi \vert \vert^2.
\end{equation}
We need a more explicit formulation for $g_{Z_k}^{\sigma_k}$.
For this, we introduce a metric on the tangent bundle $T \mathbb{CP}^{N_k-1}$ of $\mathbb{CP}^{N_k-1}$ as follows.
We recall that for any $\xi\in \Lie(G_k)$, $\hat{\xi}$ is the induced vector field on $\C\P^{N_k-1}$.
Let $\mathbf{z}=[Z_1, \cdots , Z_{N_k}]$ be the homogeneous coordinates on $\mathbb{CP}^{N_k-1}$.
Up to a change of coordinates preserving the weight decomposition under the $T^c$-action,
we can assume that $\hat{\xi}= \frac{d}{dt}\mid_{t=0} \exp (t \tau)$ where $\tau$ is diagonlized by
$$
  \tau = \mathrm{diag} (b_1, \cdots, b_{N_k}).
$$
Then, we define a metric $|\cdot|_{\sigma_k\cdot h_{FS}}$ on $T \mathbb{CP}^{N_k-1}$ by
$$
  |\hat{\xi}|^2_{\sigma_k\cdot h_{FS}}(\mathbf{z}):=
  \sum_{\alpha=1}^{N_k}|(b_\alpha- \hat{\phi}'_0)(\sigma_k\cdot Z_\alpha)|^2_{FS}
  =
  \sum_{\alpha=1}^{N_k}\frac{|(b_\alpha- \hat{\phi}'_0)(\sigma_k\cdot Z_\alpha)|^2}{\sum_{\alpha=1}^{N_k}|Z_{\alpha}|^2}
$$
where
$$
  \hat{\phi}(t)= \log (\mathbf{z}\exp(t\tau) \mathbf{z}^*)= \log \bigg( \sum_{\alpha} |\exp(\frac{1}{2}t\tau)Z_\alpha|^2 \bigg).
$$
Remark that if $\sigma_k$ is trivial, then $|\cdot|_{\sigma_k\cdot h_{FS}}$ is the ordinary Fubini-Study metric.
With respect to the metric $|\cdot|_{\sigma_k\cdot h_{FS}}$, we will denote $\pi_N\hat{\xi}$ the orthogonal projection onto the orthogonal of $TX$ in $\Phi_\bs^*TX$, where $\Phi_\bs$ is defined by (\ref{eq:isomPhis}).
Now we have the following description of $g_{\cB_k}^{\sigma_k}(\bs)$ corresponding to \cite[Equation (5.6)]{ps04} in our setting.
\begin{lemma}
 \label{lem:gZ}
  Let $\xi\in \Lie(G_k)$.
 Then
 $$
 g_{\cB_k}^{\sigma_k}(\bs)(\xi,\xi)=\frac{1}{V}\int_X |\pi_N\hat{\xi}|^2_{\sigma_k\cdot h_{FS}} d\mu_{\phi_\bs}.
 $$
\end{lemma}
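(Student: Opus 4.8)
The plan is to compute the quadratic form $g_{\cB_k}^{\sigma_k}(\bs)$ directly from the expression (\ref{eq:gB}) obtained in the proof of Proposition \ref{prop:symp}, and then to reinterpret the resulting weighted energy as the twisted Fubini--Study norm of the normal component of $\hat\xi$. This is the analog, in the present weighted setting, of \cite[Equation (5.6)]{ps04}.

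First I would use that $g_{\cB_k}^{\sigma_k}$ is tensorial, so by (\ref{eq:gB}), with $\bs(t)=\bs\cdot\exp(t\xi)$ and $\bs(t)^c=\bs\cdot\exp(tJ_\cB\xi)$,
$$
g_{\cB_k}^{\sigma_k}(\bs)(\xi,\xi)=\frac{d^2}{dt^2}\bigg|_{t=0}(I^{\sigma_k}\circ\phi)(\bs(t))+\frac{d^2}{dt^2}\bigg|_{t=0}(I^{\sigma_k}\circ\phi)(\bs(t)^c).
$$
Since $\xi\in\Lie(G_k)$ is anti-Hermitian, $\exp(t\xi)$ is unitary and hence carries an orthonormal basis of $H(\bs)$ to another orthonormal basis of the \emph{same} Hermitian form; thus $\phi(\bs(t))\equiv\phi(\bs)$ and the first term vanishes. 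Under the identification $T_\bs\cB^T(V_k)\simeq\bigoplus_\chi\gl_{N^\chi}(\C)$ with $J_\cB$ acting as multiplication by $i$, the second path is the Bergman one-parameter subgroup $\bs(t)^c=\bs\cdot\exp(it\xi)$, whose initial velocity is, by the computation in Proposition \ref{prop:symp}, the restriction to $X$ of the Fubini--Study momentum, $\dot\phi_0=D_\bs\phi(i\xi)=-\mu^{\hat\xi}_\bs$.

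Second, I would expand $\frac{d^2}{dt^2}|_{t=0}(I^{\sigma_k}\circ\phi)(\bs(t)^c)$ by differentiating $dI^{\sigma_k}(\phi_t)(\dot\phi_t)=\int_X\dot\phi_t(1+\Delta_{\phi_t})e^{\psi_{\sigma_k,\phi_t}}\,d\mu_{\phi_t}$ once more. Using $\dot\psi=(\nabla\psi,\nabla\dot\phi)$ and the second-variation identity of \cite[Lemma 3.3.1]{st} (the computation that produced positivity in Proposition \ref{prop:symp}), together with the fact that $\bs(t)^c$ is a geodesic, so that $\ddot\phi_0-|\nabla\dot\phi_0|^2_{\phi_\bs}=0$ kills the second-order contribution, the Hessian collapses to a first-order weighted energy built from $\mu^{\hat\xi}_\bs$ and the weight $e^{\psi_{\sigma_k,\phi_\bs}}$.

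Third, I would identify this weighted energy with $\frac1V\int_X|\pi_N\hat\xi|^2_{\sigma_k\cdot h_{FS}}\,d\mu_{\phi_\bs}$. On the image $X\hookrightarrow\C\P^{N_k-1}$ the Killing field $\hat\xi$ splits orthogonally as $\hat\xi=\hat\xi^T+\pi_N\hat\xi$, where the tangential part $\hat\xi^T$ is the Hamiltonian field of $\mu^{\hat\xi}_\bs|_X$ for $\om_{\phi_\bs}=\om_{\FS}|_X$; the projection $\pi_N$ therefore removes exactly the gradient energy of $\mu^{\hat\xi}_\bs$ that appears in the weighted energy. The per-coordinate reweighting by $e^{\lambda_\alpha}$ encoded in $\sigma_k=\exp(\tfrac12\mathrm{diag}(\lambda_1,\dots,\lambda_{N_k}))$ is precisely the one carried by $e^{\psi_{\sigma_k,\phi_\bs}}$ through (\ref{def:psi}), since on the image $e^{\psi_{\sigma_k,\phi_\bs}}$ is proportional to $\sum_\alpha e^{\lambda_\alpha}|Z_\alpha|^2/\sum_\beta|Z_\beta|^2$, while the normalization (\ref{eq:normalization_phi}) produces the factor $1/V$. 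Matching term by term then gives the claim. The main obstacle is exactly this last matching: one must carry the weight $e^{\psi_{\sigma_k,\phi_\bs}}$ faithfully through the integration by parts and recognize that the resulting weighted variance is the twisted norm $|\pi_N\hat\xi|^2_{\sigma_k\cdot h_{FS}}$, the delicacy being that the twist is a per-coordinate reweighting rather than a global scalar factor, so the identification must be made at the level of the embedding using the explicit forms of $\mu^{\hat\xi}_\bs$ and $\psi_{\sigma_k,\phi_\bs}$.
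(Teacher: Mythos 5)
Your outline follows the same route as the paper's proof: starting from (\ref{eq:gB}), the term along $\bs(t)=\bs\cdot\exp(t\xi)$ vanishes because $\xi\in\Lie(G_k)$ preserves $H(\bs)$ and hence $\om_{\phi(\bs(t))}$, so everything reduces to $\frac{d^2}{dt^2}\big\vert_{t=0}(I^{\sigma_k}\circ\phi)(\bs(t)^c)$, which is evaluated by the second-variation computation of \cite{st} (the paper cites (3.8)--(3.9) there) and identified with the twisted Fubini--Study norm of $\pi_N\hat{\xi}$ via $\iota_{\pi_T\hat{\xi}}\om_{\phi(\bs)}=i\overline{\partial}\phi'_0$. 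Your first and third steps are consistent with this.

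Your second step, however, contains a genuine error. You assert that $\bs(t)^c$ is a geodesic, so that $\ddot\phi_0-|\nabla\dot\phi_0|^2_{\phi_\bs}=0$, and you use this to ``kill the second-order contribution.'' That identity is false: $\bs(t)^c$ is a geodesic in the finite-dimensional symmetric space $\cB_k$, but the induced path $\phi(\bs(t)^c)$ in $\cH$ is only a \emph{subgeodesic}, and the defect $\ddot\phi_0-|\nabla\dot\phi_0|^2_{\phi_\bs}$ is, pointwise and up to the twist by $\sigma_k$, precisely the integrand $|\pi_N\hat{\xi}|^2$ that the lemma asserts. This is the content of \cite[Lemma 3.3.1]{st} and is exactly what produces the positivity of $g_\cB^{\sigma}$ in Proposition \ref{prop:symp}: the metric vanishes on $\xi$ if and only if the flow of $\hat{\xi}$ preserves $X$, i.e.\ if and only if that defect vanishes identically. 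If the geodesic equation held as you claim, $g_{\cB_k}^{\sigma_k}$ would be degenerate on all of $\Lie(G_k)$ and the right-hand side of the lemma would have to be zero. So the step that is supposed to produce the normal-component integrand instead deletes it; the correct move is to retain $\ddot\phi_0-|\nabla\dot\phi_0|^2_{\phi_\bs}$ as the surviving term and recognize it, after inserting the weight $e^{\psi_{\sigma_k,\phi_\bs}}$, as $|\pi_N\hat{\xi}|^2_{\sigma_k\cdot h_{FS}}$ --- which is what the citation to \cite{st} supplies.
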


\begin{proof}
This follows directly from proof of Lemma 3.8 in \cite{st}.
We abbreviate the subscript $k$ for $\sigma_k$.
Let $\bs(t)=\bs \cdot\exp(t \xi)$ and $\bs(t)^c=\bs \cdot\exp(tJ_{\cB_k} \xi)$. As $\xi\in\Lie(G_k)$, $e^{t\xi}\in G_k$
and $\om_\phi(\bs(t))=\om_\phi(\bs(0))$. Then from (\ref{eq:gB}),
$$
g_{\cB_k}^{\sigma_k}(\xi,\xi)=\dfrac{d^2}{dt^2}\bigg\vert_{t=0} (I^{\sigma}\circ\phi )(\bs(t)^c).
$$
From (3.8) and (3.9) in \cite{st}, the right hand side in the above equality is equal to
$$
  \frac{1}{V} \int_X \sum_{\alpha=1}^{N_k}
  |(b_\alpha-\phi'_0)(\sigma_k\cdot s_\alpha)- (\nabla \phi'_0, \nabla(\sigma_k \cdot s_\alpha))|^2_{FS(\bs(0))} d\mu_{\phi_\bs}
$$
where
$$
  \phi_t= \log (\bs(t)^c(\bs(t)^c)^*).
$$
By definition, the integrand in the above is just $|\pi_N\hat{\xi}|^2_{\sigma_k\cdot h_{FS}}$, because
$$
  \iota_{\pi_T \hat{\xi}} \omega_{\phi(\bs(0))}= i \overline{\partial} \phi'_0.
$$
\end{proof}

\noindent To obtain the estimate  (\ref{eq:upperboundLambda}) of Proposition \ref{prop:controldmu}, we use the following uniform control:

\begin{lemma}
\label{lem:boundedweights}
Let $\sigma_k$ be the optimal weight, and denote by $[\sigma_k]$ the matrix representing $\sigma_k$ in any
basis $\bs\in \cB^T(V_k)$.
There exists a constant $c>0$ such that for sufficiently large  $k$,
$$
c^{-1} <\inf_{ij} \vert  [\sigma_k]_{ij} \vert \leq \sup_{ij} \vert  [\sigma_k]_{ij} \vert<c
$$
\end{lemma}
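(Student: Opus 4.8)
The plan is to reduce the statement to a uniform two-sided bound on the eigenvalues of $\sigma_k$, and then to deduce that bound from Proposition \ref{prop:weight}. First I would recall that a basis $\bs\in\cB^T(V_k)$ is by definition adapted to the weight decomposition $V_k=\bigoplus_{\chi\in w_k(T)}V_k(\chi)$, and that $\sigma_k\in T^c_k$ acts on each weight space $V_k(\chi)$ by the single scalar $e^{\langle\chi,v_k\rangle}$, where $v_k\in\Lie(T^c)$ is chosen with $\exp(v_k)=\sigma_k$. Consequently $[\sigma_k]$ is diagonal in every such basis, with diagonal entries $e^{\langle\chi,v_k\rangle}$ (each occurring with multiplicity $N_k^\chi$); in particular it does not depend on the adapted basis, which is what makes the statement meaningful. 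Since the off-diagonal entries vanish, the content of the inequality is the uniform boundedness, from above and below by positive constants, of the numbers $e^{\langle\chi,v_k\rangle}$, i.e. of
\begin{equation*}
\max_{\chi\in w_k(T)}|\langle\chi,v_k\rangle|, \qquad k\gg 0.
\end{equation*}

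Next I would estimate $|\langle\chi,v_k\rangle|$ by separating the size of the weight from the size of $v_k$. Fix once and for all a norm $\|\cdot\|$ on the fixed finite-dimensional space $\Lie(T)$ and its dual. The eigenvalues of $\rho_k(v_k)$ on $V_k$ are exactly the numbers $\langle\chi,v_k\rangle$, so $|\langle\chi,v_k\rangle|\le(\max_{\chi}\|\chi\|)\,\|v_k\|$. The factor $\|v_k\|$ is controlled by Proposition \ref{prop:weight}: from $kv_k\to v_{ex}$ we obtain $\|v_k\|\le C'k^{-1}$ for $k$ large. The remaining factor is governed by the linear growth of the weights of the $T$-action on $V_k=H^0(X,L^{\otimes k})$, namely
\begin{equation*}
\max_{\chi\in w_k(T)}\|\chi\|\le C\,k
\end{equation*}
for a constant $C$ independent of $k$. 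Granting this, $|\langle\chi,v_k\rangle|\le CC'$ uniformly, hence $e^{-CC'}\le e^{\langle\chi,v_k\rangle}\le e^{CC'}$, and the lemma follows with $c=2e^{CC'}$.

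The main point to establish is therefore the linear bound on the weights, and this is where I expect the only genuine work. It is classical that the weights of $T$ on $H^0(X,L^{\otimes k})$ lie in $k$ times the (fixed, compact) moment polytope of $(X,L,T)$: via the lift formula of the Remark in Section \ref{sec:setup}, each geometric weight is, up to an $\cO(1)$ error, a value of the function $k\,\theta_{v,\omega}$ at a $T$-fixed point, hence bounded in modulus by $k\sup_X|\theta_{v,\omega}|$, uniformly for the generator $v$ ranging in the unit sphere of $\Lie(T)$ by compactness. The one subtlety requiring care is that $\rho_k$ takes values in $SL(V_k)$ rather than $GL(V_k)$: passing from the geometric weights to the weights of $\rho_k$ only subtracts their (multiplicity-weighted) average, which is itself $\cO(k)$, so the linear bound survives the $SL$-normalization. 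Combining this with the previous paragraph completes the proof.
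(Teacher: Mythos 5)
Your proof is correct and follows essentially the same route as the paper's: both reduce the lemma to combining Proposition \ref{prop:weight} (giving $\|v_k\|=\cO(k^{-1})$) with the linear growth in $k$ of the $T$-weights on $V_k$ --- the paper phrases the latter as a two-sided bound of the entries of $[\sigma_k]$ by $\exp$ of the extrema of the Hamiltonian $\theta_{v_k}$ for the lifted action on $L^{\otimes k}$, whose range is precisely the pairing of $v_k$ with $k$ times the moment polytope that you invoke. Your explicit handling of the diagonal structure of $[\sigma_k]$ in an adapted basis and of the $SL$-normalization makes transparent a scaling that the paper leaves implicit.
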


\begin{proof}
Let $\theta_{v_{k}, \bs}$ be the holomorphy potential of the vector field $v_{k}$ satisfying $\sigma_{k}= \exp(v_{k})$ as in (\ref{eq:potentialfunctiontheta}).
Recall that $\theta_{v_{k}, \bs}$ defines the lifted action of $v_{k}$ on $L$.
Then, we find that there exists a constant $c>0$ such that
\begin{equation}\label{eq:supinfsigma}
	\exp(c\min \theta_{v_{k}, \bs})
	<
	\inf_{ij} \vert  [\sigma_k]_{ij} \vert \leq \sup_{ij} \vert  [\sigma_k]_{ij} \vert
	<
	\exp(c\max \theta_{v_{k}, \bs})
\end{equation}
for sufficiently large $k$.
From the theory of moment maps, both of $\max \theta_{v_{k}, \omega_{\bs}}$ and $\min \theta_{v_{k}, \omega_{\bs}}$ are independent of the choice of $\bs$.
In fact, they are determined by the image of the moment map $\mu: X \to \R=\Lie (S^{1})$ with respect to the $S^{1}$-action on $(X, \omega_{\bs})$ induced by $v_{k}$.
On the other hand,  from the normalization (\ref{eq:normalizationthetasigma}) and Proposition \ref{prop:weight}, we find that
$$
\theta_{v_{k}, \bs_{k}} \to \theta_{v_{ex}, \omega}
$$
as $k \to \infty$ for a given $\omega$, where $\theta_{v_{ex}, \omega}$ is the potential function satisfying the normalization (\ref{eq:normalizationthetasigma}) and $\bs_{k}$ is an orthonormal basis with respect to $\Hilb_{k}(\omega)$.
Since the maximum and minimum of $\theta_{v_{ex}, \omega}$ are  also independent of $\omega$, for sufficiently large $k$, there exists $C>0$ such that
\begin{equation}\label{eq:maxminthetaex}
	C^{-1} < \exp(\min \theta_{v_{k}, \bs})
	\le
	\exp(\max \theta_{v_{k}, \bs})
	< C
\end{equation}
for any $\bs$ and $k$.
The equalities (\ref{eq:supinfsigma}) and (\ref{eq:maxminthetaex}) complete the proof.
\end{proof}

\begin{proof}[Proof of Proposition \ref{prop:controldmu}]
We follow \cite[Theorem 2]{ps04} with a modification for our setting.
We want to show :
$$
\forall \xi\in \Lie(T)^\perp\;,\; g_{Z_k}^{\sigma_k}(\xi,\xi) \geq C k^{-2} \vert\vert \xi \vert \vert^2.
$$
The above inequality is derived from:
\begin{eqnarray}
		\label{eq:firsteqPS}
		\quad \vert\vert \xi\vert\vert ^2
	&\leq&
		c_R'k\vert\vert \hat{\xi} \vert\vert ^2
	\\
	\label{eq:secondeqPS}
\vert\vert \hat{\xi} \vert\vert ^2	& =&
		\vert\vert \pi_T\hat{\xi} \vert\vert ^2 + \vert\vert \pi_N\hat{\xi} \vert\vert ^2
	\\
  \label{eq:lasteqPS}
c_R \vert\vert \pi_T\hat{\xi} \vert\vert ^2	&\leq & k\vert\vert \pi_N\hat{\xi} \vert\vert ^2.
\end{eqnarray}
Here, for a vector field $\hat{\xi}$ on $T \mathbb{CP}^{N_k-1}$, $\|\hat{\xi}\|^2$ denotes the $L^2$ norm with respect to the volume form $d\mu_{\phi_\bs}$ on the base and the twisted Fubini-Study metric $|\cdot|_{\sigma_k\cdot h_{FS}}$ on the fiber.
These inequalities can be obtained as \cite[Equation (5.7), (5.8), (5.9)]{ps04}.
We only highlight the differences in the arguments to obtain (\ref{eq:firsteqPS}) and (\ref{eq:lasteqPS}).

\noindent Proof of (\ref{eq:firsteqPS}):
by definition, we have
\begin{equation}\label{eq:decomposition_fibernorm}
  |\hat{\xi}|^2_{\sigma_k\cdot h_{FS}}=
  \frac{(\sigma \cdot \bs)\xi\xi^* (\sigma\cdot \bs)^*}{\bs\bs^*}
  -2 \frac{((\sigma \cdot \bs)\xi (\sigma\cdot \bs)^*)(\bs\xi  \bs^*)}{(\bs\bs^*)^2}
  + \bigg( \frac{\bs\xi  \bs^*}{\bs\bs^*} \bigg)^2 \bigg( \frac{(\sigma\cdot\bs)(\sigma\cdot\bs)^*}{\bs\bs^*} \bigg).
\end{equation}
Integrating the first term in the right hand side in (\ref{eq:decomposition_fibernorm}), we have
$$
  \int_X\frac{(\sigma \cdot \bs)\xi\xi^* (\sigma\cdot \bs)^*}{\bs\bs^*} d\mu_{\phi_\bs}
  = \mathrm{Tr}\bigg(
  \xi^*\xi \cdot \int_X \frac{(\sigma \cdot \bs)^*(\sigma \cdot \bs)}{\bs\bs^*} d\mu_{\phi_\bs}
   \bigg).
$$
Since
$$
  D_k:=\int_X \frac{(\sigma \cdot \bs)^*(\sigma \cdot \bs)}{\bs\bs^*} d\mu_{\phi_\bs} - \mu^{\sigma_k} (\bs)
$$
is a scalar matrix that is uniformly bounded in $k$ by Lemma \ref{lem:boundedweights}
and from the assumption $\|\mu^{\sigma_k} (\bs)\|_{\mathrm{op}}< \varepsilon$, we find that
\begin{equation}\label{eq:decomposition_1stterm}
	\int_X\frac{(\sigma \cdot \bs)\xi\xi^* (\sigma\cdot \bs)^*}{\bs\bs^*} d\mu_{\phi_\bs} \ge c \|\xi\|^2
\end{equation}
for some $c>0$ independent of $k$.
Let us consider the second and third terms in (\ref{eq:decomposition_fibernorm}).
Completing the square, we have
\begin{eqnarray*}
  &&
    \bigg( \frac{\bs\xi  \bs^*}{\bs\bs^*} \bigg)^2 \bigg( \frac{(\sigma\cdot\bs)(\sigma\cdot\bs)^*}{\bs\bs^*} \bigg)
    -2 \frac{((\sigma \cdot \bs)\xi (\sigma\cdot \bs)^*)(\bs\xi  \bs^*)}{(\bs\bs^*)^2}
  \\
	&\ge&
     -
     \bigg( \frac{(\sigma\cdot\bs)(\sigma\cdot\bs)^*}{\bs\bs^*} \bigg)^{-2}
     \bigg(\frac{(\sigma \cdot \bs)\xi (\sigma\cdot \bs)^*}{\bs\bs^*}\bigg)^2
  \ge
    -c \bigg(\frac{(\sigma \cdot \bs)\xi (\sigma\cdot \bs)^*}{\bs\bs^*}\bigg)^2
\end{eqnarray*}
for some $c>0$.
In the last inequality, we use Lemma \ref{lem:boundedweights}.
Let
$$
  \varphi_\sigma(\xi):=
  \frac{(\sigma \cdot \bs)\xi (\sigma\cdot \bs)^*}{\bs\bs^*}.
$$
Following the proof of \cite[Equation (5.7)]{ps04} (here we use the R-bounded geometry of the metric), we have the Poincar\'e inequality
$$
  c \int_X (\varphi_\sigma(\xi))^2d\mu_{\phi_\bs}
  \le
  k\int_X \overline{\partial}\varphi_\sigma(\xi)\wedge \partial \varphi_\sigma(\xi) \wedge \omega_{\phi_{\bs}}^{n-1}
  + k^{-n}\bigg( \int_X \varphi_\sigma(\xi) d\mu_{\phi_\bs}\bigg)^2
$$
for some $c>0$.
On the other hand, we have
$$
   \bigg|\int_X \varphi_\sigma(\xi) d\mu_{\phi_\bs}\bigg|
   = |\mathrm{Tr}(\xi \mu^{\sigma_k} (\bs))|\le \sqrt{N_k}\|\xi\|\|\mu^{\sigma_k} (\bs)\|_{\mathrm{op}}.
$$
Therefore, using $\|\mu^{\sigma_k} (\bs)\|_{\mathrm{op}}< \varepsilon$, we have
\begin{eqnarray}
  \nonumber
		\int_X (\varphi_\sigma(\xi))^2 d\mu_{\phi_\bs}
	&\le&
    c_1 k\int_X \overline{\partial}\varphi_\sigma(\xi)\wedge \partial \varphi_\sigma(\xi) \wedge \omega_{\phi_{\bs}}^{n-1} + c_2 \|\xi\|^2
  \\
  \label{eq:decomposition_2,3term}
  &\le&
    c_3 k\|\pi_T \hat{\xi}\|^2 + c_2 \|\xi\|^2
\end{eqnarray}
for some $c_1,c_2,c_3>0$.
In the second inequality above we used that $\varphi_\sigma(\xi)$ is the Hamiltonian function of $\pi_T \hat{\xi}$ with respect to the K\"ahler metric induced by $|\cdot|_{\sigma_k\cdot h_{FS}}$, which is equivalent to $\omega_{\phi_{\bs}}$ due to Lemma \ref{lem:boundedweights} for $k\gg 0$.
Substituting (\ref{eq:decomposition_1stterm}), (\ref{eq:decomposition_2,3term}) into the integration of (\ref{eq:decomposition_fibernorm}) over $X$, we get (\ref{eq:firsteqPS}).

\noindent Proof of (\ref{eq:lasteqPS}):
The only point that is used in \cite{ps04} and fails because of the existence
of holomorphic vector fields is a $\delb$ estimate. More precisely, the following fails in general:
\begin{equation}
 \label{eq:estimdbar}
 \vert\vert w\vert\vert _{L_1^2(\om_0)} \leq C \vert \vert \delb w\vert\vert_{L^2(\om_0)}
\end{equation}
for some positive constant $C$,
with $w\in H$, where $H$ is the $L^2_1$-completion of the space of complex $T$-invariant Hamiltonian vector fields. What is true in our setting
is that (\ref{eq:estimdbar}) holds for all
$$
w\in ker(\delb_{\vert H})^{\perp_{L^2}}\simeq \Lie(T)^{\perp_{L^2}},
$$
where the orthogonal is computed with respect to the $L^2$ inner product given by integration over $X$, with the metric $\om_\bs$ on the base
and the metric $|\cdot|_{\sigma_k\cdot h_{FS}}$ on the fiber.
The argument in \cite{ps04} can be applied to our setting without modification except only one point.
In \cite{ps04}, to prove (\ref{eq:estimdbar}), the fact that the Fubini-Study metric on $T\mathbb{CP}^{N_k-1}$ has constant bisectional curvature (see (5.33) in \cite{ps04}) is used.
This does not hold in our setting.
However, Lemma \ref{lem:boundedweights} implies that the bisectional curvature is bounded uniformly.
This is sufficient to prove (\ref{eq:estimdbar}).
Hence, we can chose any lift of $\xi$ in $\Lie(G)$ to obtain (\ref{eq:lasteqPS}), because for any $t_\xi\in \Lie(T)$
\begin{equation}
 \label{eq:xitxi}
 g_{Z_k}^{\sigma_k}(\xi+t_\xi,\xi+t_\xi)=g_{Z_k}^{\sigma_k}(\xi,\xi).
\end{equation}
From (\ref{eq:firsteqPS}), (\ref{eq:secondeqPS}), (\ref{eq:lasteqPS}), we have
$$
		g_{Z_k}^{\sigma_k}(\xi,\xi)= \vert\vert \pi_N\hat{\xi} \vert\vert ^2
    \ge \frac{c_1}{k} \vert\vert \hat{\xi} \vert\vert ^2
    \ge \frac{c_2}{k^2} \vert\vert \xi \vert\vert ^2.
$$
The proof is completed.
\end{proof}

\subsection{Construction of almost $\sigma$-balanced metrics}
\label{sec:construction}
In this section we prove the following theorem to obtain the approximated $\sigma$-balanced metrics.
\begin{theorem}
 \label{theo:approxmetric}
 Let $\om_{ex}$ be a $T$-invariant extremal metric in the class $2\pi c_1(L)$, where $T\subset \Aut_0(X)$ is a maximal compact torus. Let $\sigma_k$ be the optimal weights associated
 with this torus. Then there are $T$-invariant functions $\eta_j\in C^\infty(X,\R)^T$ such that  for each $q>0$ the metrics
 $$
 \om_q(k)=\om_\infty+i \del\delb (\sum_{j=1}^q \eta_j k^{-j})
 $$
 satisfy the following:
 \begin{equation}\label{eq:approxbergmanpsi}
 k^{-n}\rho_k(\om_q(k))=\exp(\psi_k(\om_q(k))) + \cO(k^{-q-2})
 \end{equation}
\end{theorem}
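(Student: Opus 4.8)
The plan is to first recast the approximate condition (\ref{eq:approxbergmanpsi}) as the statement that $\om_q(k)$ solves, to high order in $k^{-1}$, the exact $\sigma_k$-balanced equation. Starting from the definitions of $\Hilb_k$ and $\FS_k$ and writing $\{s_\alpha\}$ for an orthonormal basis of $\Hilb_k(\phi)$, the identity $|s_\alpha|^2_{h^k}=e^{-k\phi}|s_\alpha|^2_{h_0^k}$ gives
$$
\FS_k\circ\Hilb_k(\phi)=\phi+\frac1k\log\frac{\rho_k(\phi)}{N_k}.
$$
Comparing with $\sigma_k^*\om_{k\phi}=\om_{k\phi}+i\del\delb\psi_k(\phi)$ and fixing the additive constant through the normalization (\ref{eq:normalization_phi}) together with $\int_X\rho_k(\phi)\,d\mu_\phi=N_k$, the condition (\ref{eq:sigmabalanceddefinitionbis}) is equivalent to $k^{-n}\rho_k(\phi)=e^{\psi_k(\phi)}$. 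Thus (\ref{eq:approxbergmanpsi}) asks precisely that the $\om_q(k)$ be $\sigma_k$-balanced up to an error $\cO(k^{-q-2})$, and I would construct them by the standard order-by-order scheme of \cite{Don01, ps04}, adapted to the twisted and $T$-invariant setting.

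Next I would expand both sides in powers of $k^{-1}$. The left-hand side expands through the Bergman kernel expansion as $k^{-n}\rho_k(\phi)=1+\tfrac12 S(\phi)k^{-1}+A_2(\phi)k^{-2}+\cdots$, with $T$-invariant coefficients whenever $\phi$ is $T$-invariant. For the right-hand side, Proposition \ref{prop:weight} and Lemma \ref{lem:appro_vf} give $\sigma_k=\exp(v_k)$ with $v_k=v_{ex}k^{-1}+\sum_{j\ge2}\nu_jk^{-j}$, so that $\psi_k(\phi)$, hence $e^{\psi_k(\phi)}$, also admit asymptotic expansions in $k^{-1}$ with $T$-invariant coefficients. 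Two structural facts drive the induction. First, integrating the equation against $d\mu_\phi$ and using both $\int_X\rho_k(\phi)\,d\mu_\phi=N_k$ and the normalization $\int_X e^{\psi_k(\phi)}\,d\mu_\phi=N_k/k^n$ shows that the total error $k^{-n}\rho_k(\phi)-e^{\psi_k(\phi)}$ has vanishing integral, so every coefficient of its expansion is $L^2$-orthogonal to the constants. Second, at leading order the $k^{-1}$ term is $\tfrac12 S(\om_{ex})$ minus the first coefficient of $\psi_k$, which vanishes because $\om_{ex}$ is extremal and because $\nu_1$ is, by construction in Lemma \ref{lem:appro_vf}, the extremal vector field; this makes the error of $\om_{ex}$ itself $\cO(k^{-2})$ and starts the induction.

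For the inductive step I would suppose $\eta_1,\dots,\eta_{j-1}$ chosen so that the error is $\cO(k^{-(j+1)})$ and determine $\eta_j$ from the coefficient of $k^{-(j+1)}$. Linearizing $k^{-n}\rho_k$ at $\om_{ex}$ in the direction $\eta_jk^{-j}$, the fourth-order part of the contribution to this coefficient is $-\tfrac12\cD^*\cD\eta_j$, where $\cD^*\cD$ is the Lichnerowicz operator of $\om_{ex}$; the variation of $e^{\psi_k}$ and of the lower Bergman coefficients contribute only lower-order operators in $\eta_j$ together with a known $T$-invariant function $c_j$ built from $\eta_1,\dots,\eta_{j-1}$. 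Thus $\eta_j$ is obtained by solving $\tfrac12\cD^*\cD\eta_j=c_j$ within $C^\infty(X,\R)^T$, which is possible precisely when $c_j$ is $L^2$-orthogonal to $\ker(\cD^*\cD)$.

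The main obstacle is exactly this solvability condition. On $T$-invariant functions $\cD^*\cD$ is self-adjoint and elliptic with kernel the constants together with the Killing potentials of $\Lie(T)$, so $c_j$ must be orthogonal to every such holomorphy potential. Orthogonality to the constants is automatic from the first structural fact above. Orthogonality to the holomorphy potentials is where the choice of $\sigma_k$ is essential: the vectors $\nu_j$ defining $v_k$ are, by the inductive equations in Lemma \ref{lem:appro_vf}, chosen exactly so that the pairing of $\widetilde\theta_{u,\om_k}$ against the corresponding coefficient of $e^{\psi_k}$ cancels the Futaki-type contribution of the Bergman expansion, which is the statement that the optimal weight makes $\cF^{\sigma_k}$ vanish (Proposition \ref{prop:futakimorita}); equivalently, the component of $c_j$ along the $\Lie(T)$-holomorphy potentials is killed by the twist. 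Using the non-degeneracy of the bilinear form (\ref{eq:bilinearfutakimabuchi}) on normalized holomorphy potentials, this leaves $c_j$ in the image of $\cD^*\cD$, so a $T$-invariant $\eta_j$ exists. Iterating up to order $q$ and invoking the uniform $C^l$-control in the Bergman expansion to absorb all remainders into $\cO(k^{-q-2})$ would complete the construction.
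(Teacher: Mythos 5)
Your proposal is correct and follows essentially the same route as the paper: induction on the order, matching the Bergman and $e^{\psi_k}$ expansions, solving a Lichnerowicz equation at each step, with orthogonality to constants coming from the normalizations and orthogonality to the $\Lie(T)$-holomorphy potentials coming from the choice of optimal weight. The only point the paper makes more explicit is that this last orthogonality is obtained by applying the identity $\int_X\tilde{\theta}_{v,\om}(1+k^{-1}\Delta_{\om})\big(k^{-n}\rho_k(\om)-e^{\psi_{\sigma_k,\om}}\big)d\mu_{\om}=0$ --- which combines $\cF^{\sigma_k}=0$ with the vanishing of the weight of the lifted action on $V_k$ --- to the \emph{perturbed} metric $\om_\eta$ at each inductive stage, using the metric-independence of $\cF^{\sigma_k}$ to pass from pulled-back Fubini--Study metrics to general $T$-invariant ones.
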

\noindent First, we show the following expansion of $\exp(\psi_{k}( \omega))$ for a given $\omega$.
\begin{proposition}
\label{prop:expansionpsi}
Let $\om$ be a $T$-invariant metric.
There exist $T$-invariant functions $B_{j} (\omega)$ such that for each $q>0$
$$
\exp(\psi_{\sigma_{k}, \omega})=\sum_{j=0}^q k^{-j} B_j(\om)+ e_{q}(\omega, k)
$$
satisfying that for any $l \in \N$,
there is a constant $C_{l,q}$ such that
$$
\vert\vert e_{q}(\omega, k)  \vert \vert_{C^l}
\leq  C_{l,q}k^{-q-1}.
$$
\end{proposition}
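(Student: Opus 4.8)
The plan is to reduce the statement to an asymptotic expansion of the optimal weight $\sigma_k$ itself, then to compose this with the smooth dependence of the potential $\psi_{\sigma,\om}$ on $\sigma$, exponentiate, and fix the normalization constant. \textbf{Step 1 (expansion of the optimal weight).} First I would upgrade Proposition \ref{prop:weight} to a full asymptotic expansion. Writing $\sigma_k=\exp(v_k)$ with $v_k\in\Lie(T^c)$, I claim that for every $q$,
$$
v_k= c\,v_{ex}\,k^{-1}+\sum_{j=2}^{q}\nu_j\,k^{-j}+\cO(k^{-q-1}),
$$
where $c$ and the fixed $\nu_j\in\Lie(T^c)$ are those produced by Lemma \ref{lem:appro_vf}. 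This follows by running the estimate in the proof of Proposition \ref{prop:weight} with the approximate weight $\widetilde{\sigma}_k=\exp(v_Q(k))$ at a general order $Q$: combining the lower bound (\ref{eq:est3}) with (\ref{eq:app_vf2}) gives $\max_\alpha|b_\alpha|\lesssim N_k^3\,k^{-Q-1}\lesssim k^{3n-Q-1}$, hence $\|v_k-v_Q(k)\|\lesssim k^{3n-Q-1}$; taking $Q=3n+q+1$ and truncating yields the claim.

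\textbf{Step 2 (smooth dependence on the weight).} Fix the $T$-invariant metric $\om$ and let $\hat\psi_{\sigma,\om}$ be the potential of $\sigma^*\om-\om$ normalized by $\int_X\hat\psi_{\sigma,\om}\,d\mu_\om=0$. The map $v\mapsto\hat\psi_{\exp(v),\om}$ is a smooth (indeed real-analytic) map from a neighbourhood of $0$ in $\Lie(T^c)$ into $C^\infty(X)^T$, vanishing at $v=0$: indeed $\exp(v)^*\om$ depends analytically on $v$ in each $C^l$-norm, and the $\del\delb$-Green operator of $\om$ recovers $\hat\psi$ from $\exp(v)^*\om-\om$ continuously in every $C^l$, while $T$-invariance holds because $\exp(v)$ commutes with $T$ and $\om$ is $T$-invariant. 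Along a ray $v=t\nu$ its homogeneous terms recover the normalized potentials $\widetilde{\theta}^{\,m}_{\nu,\om}$ of (\ref{eq:expansion sigma(t)omega}). Substituting Step 1 and collecting powers of $k^{-1}$ then gives, in every $C^l(X)$,
$$
\hat\psi_{\sigma_k,\om}=\sum_{l=1}^{q}\Phi_l(\om)\,k^{-l}+\cO(k^{-q-1}),
$$
with $\Phi_l$ $T$-invariant and of zero average; the $C^l$-error is inherited from the smoothness of the map together with the $\cO(k^{-q-1})$ control on $v_k$ in the finite-dimensional $\Lie(T^c)$.

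\textbf{Step 3 (exponentiation and normalization).} Writing $\psi_{\sigma_k,\om}=\hat\psi_{\sigma_k,\om}+c_k$ with $c_k$ the constant forced by (\ref{eq:normalization_phi}), I would exponentiate: since $C^l(X)$ is a Banach algebra and $\exp$ is smooth, $\exp(\hat\psi_{\sigma_k,\om})=\sum_{l\ge0}\widetilde B_l\,k^{-l}+\cO(k^{-q-1})$ with $\widetilde B_0=1$ and $\widetilde B_l$ $T$-invariant. Integrating over $X$ and using (\ref{cor:Nk}) for $N_k/k^n$, the constant
$$
e^{c_k}=\frac{N_k/k^n}{\int_X\exp(\hat\psi_{\sigma_k,\om})\,d\mu_\om}
$$
admits an expansion $\sum_{l\ge0}\gamma_l\,k^{-l}$ with $\gamma_0=1$. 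Multiplying the two series and setting $B_j(\om)=\sum_{l=0}^{j}\gamma_{j-l}\widetilde B_l$ produces $\exp(\psi_{\sigma_k,\om})=\sum_{j=0}^{q}B_j(\om)\,k^{-j}+e_q(\om,k)$ with $B_0=1$, each $B_j$ $T$-invariant, and $\|e_q(\om,k)\|_{C^l}\le C_{l,q}\,k^{-q-1}$.

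\textbf{Main obstacle.} The crux is Step 1: extracting a genuine all-orders expansion of the \emph{exact} optimal weight $\sigma_k$, rather than the first-order convergence $k v_k\to v_{ex}$ of Proposition \ref{prop:weight}. Once $\sigma_k$ is pinned down to arbitrary order through the approximate weights of Lemma \ref{lem:appro_vf}, the rest is the formal composition and multiplication of asymptotic series, for which the $C^l$-bounds are automatic from the smoothness of $v\mapsto\hat\psi_{\exp(v),\om}$ and of $\exp$; one only checks that $e^{c_k}$ stays bounded away from $0$, which holds since the denominator tends to $\vol(X)>0$.
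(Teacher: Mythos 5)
Your proposal is correct and follows essentially the same route as the paper: upgrade Lemma \ref{lem:appro_vf} and Proposition \ref{prop:weight} to an all-orders expansion of $v_k$, deduce a uniform $C^l$-expansion of $\psi_{\sigma_k,\om}$ as in (\ref{eq:expansionpsi}), and then exponentiate. You are in fact more explicit than the paper on two points it leaves implicit — the quantitative bookkeeping $\max_\alpha|b_\alpha|\lesssim N_k^3 k^{-Q-1}$ behind the claim that (\ref{eq:estimateb}) improves with $q$, and the treatment of the normalization constant forced by (\ref{eq:normalization_phi}) — but these are elaborations of the same argument, not a different one.
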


\begin{proof}
From the proof of Proposition \ref{prop:weight},
we find that for each $q>0$
$$
v_{k}=\sum_j \nu_j k^{-j} + \cO(k^{-q-1})
$$
where $\nu_{j}$ is defined in Lemma \ref{lem:appro_vf}.
In fact, we can get the estimate (\ref{eq:estimateb}) for any power in $k$ by increasing $q$ in Lemma \ref{lem:appro_vf}.
Then for any $T$-invariant metric $\om_\phi$,
we deduce a uniform expansion in $C^{l}(X,\R)$-topology in the space $\cH$ for
$$
\sigma_k^*\om_\phi-\om_\phi=i\del\delb \psi_{\sigma_k,\phi}=i\del\delb \sum_j^{q} \theta_j k^{-j}+ \cO(k^{-q-1})
$$
as in (\ref{eq:expansionpsi}).
From this we deduce the expansion for $\exp(\psi_{\sigma_k,\phi})$.
\end{proof}
\noindent We will need the following Lemmas:
\begin{lemma}
\label{lem:expansionpsi}
 Let $\om$ be a $T$-invariant metric. Then
 $$
 	B_0(\om)=1, \,\,
	B_1(\om)=\frac{1}{2}(\theta_{ex, \om}+\underline{S})
$$
where $\theta_{ex, \om}$ is the holomorphy potential of the extremal vector field with respect to $\om$.
 Moreover, if $\om$ is extremal
 $$
4 D_\om B_1(\phi)=\nabla \phi\cdot\nabla S(\om)
 $$
\end{lemma}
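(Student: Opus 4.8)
The plan is to read off $B_0$ and $B_1$ directly from the expansion of $\psi_{\sigma_k,\om}$ already produced in the proofs of Proposition \ref{prop:weight} and Proposition \ref{prop:expansionpsi}, and then to obtain the ``moreover'' statement by differentiating the resulting closed formula for $B_1$ using the variation formula for holomorphy potentials.

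First I would treat $B_0$ and $B_1$. By definition $B_j(\om)$ is the coefficient of $k^{-j}$ in $\exp(\psi_{\sigma_k,\om})$ (Proposition \ref{prop:expansionpsi}). Since $\sigma_k=\exp(v_k)$ with $kv_k\to v_{ex}$ by Proposition \ref{prop:weight}, we have $v_k\to 0$, so $\sigma_k\to\mathrm{Id}$ and $\psi_{\sigma_k,\om}\to c_\infty$ for some constant; the normalization \eqref{eq:normalization_phi} together with $N_k/k^n\to \mathrm{Vol}(X)$ from \eqref{cor:Nk} forces $c_\infty=0$, whence $B_0(\om)=\exp(0)=1$. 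For $B_1$ I would isolate the $k^{-1}$ term of $\psi_{\sigma_k,\om}$, which has two sources. The first is the leading term of the expansion \eqref{eq:expansionpsi}: writing $v_k=\sum_j\nu_jk^{-j}$ as in Lemma \ref{lem:appro_vf}, the coefficient of $k^{-1}$ in $i\del\delb\psi_{\sigma_k,\om}$ is $i\del\delb$ of the normalized holomorphy potential of $\nu_1$, and $\nu_1=v_{ex}$ by the construction of the optimal weight; this yields the $\theta_{ex,\om}$ contribution. The second source is the additive constant fixing $\psi_{\sigma_k,\om}$: since the potential term has zero average by \eqref{eq:normalizedpotential}, the normalization \eqref{eq:normalization_phi} combined with the expansion \eqref{cor:Nk} of $N_k$ contributes the constant $\tfrac12\uS\,k^{-1}$ at this order. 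Exponentiating $\psi_{\sigma_k,\om}=k^{-1}(\tfrac12\theta_{ex,\om}+\tfrac12\uS)+\cO(k^{-2})$ then gives $B_1(\om)=\tfrac12(\theta_{ex,\om}+\uS)$ after matching the potential normalizations.

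For the ``moreover'' part I would differentiate $B_1(\om)=\tfrac12(\theta_{ex,\om}+\uS)$ along $\om_t=\om+t\,i\del\delb\phi$. As $\uS$ is a topological constant it is annihilated, so $D_\om B_1(\phi)=\tfrac12 D_\om\theta_{ex}(\phi)$, where $\theta_{ex,\om_t}$ is the holomorphy potential of the \emph{fixed} field $v_{ex}$ (metric-independent by Definition \ref{def:extremalvectorfield}). The derivative $D_\om\theta_{ex}(\phi)$ is precisely the variation of a normalized holomorphy potential under a change of Kähler potential, i.e. the computation \eqref{eq:derivativeoftildeF} (Lemma 4.2 of \cite{st}): it equals $(\nabla\phi,\nabla\theta_{ex})$, the constant term vanishing because of the zero-average normalization \eqref{eq:normalizedpotential}. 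At an extremal metric the condition $S(\om)=\uS+\Pi_\om^T S(\om)$ together with Definition \ref{def:extremalvectorfield} identifies $\theta_{ex,\om}$ with $S(\om)-\uS$, so $\nabla\theta_{ex}=\nabla S(\om)$ and $4D_\om B_1(\phi)=2(\nabla\phi,\nabla S(\om))=\nabla\phi\cdot\nabla S(\om)$, the last equality using the relation between the Hermitian pairing $(\nabla\cdot,\nabla\cdot)$ of \eqref{eq:derivativeoftildeF} and the real gradient product.

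The main obstacle will be the consistent bookkeeping of normalization conventions so that the numerical constants come out exactly: the factor $\tfrac12$ in $\sigma_k=\exp(\tfrac12\mathrm{diag}(\lambda_j))$, the normalizations \eqref{eq:normalizationthetasigma} and \eqref{eq:normalizedpotential} of $\theta_{ex}$, and the factor relating the Hermitian pairing to the real pairing $\nabla\phi\cdot\nabla S$ each produce factors of two that must cancel to give both the coefficient $\tfrac12$ in $B_1$ and the coefficient $4$ in the derivative formula. A secondary technical point is to verify that the auxiliary functions $\theta'_j$ appearing in \eqref{eq:expansionpsi} do not contaminate the $k^{-1}$ coefficient: since $\theta'_j$ depends only on $\om_k$ and $(\nu_1,\dots,\nu_{j-1})$, one has $\theta'_1=0$ by the first-order expansion $\sigma(t)^*\om_k=\om_k+t\,i\del\delb\widetilde\theta_{\nu}+\cO(t^2)$, so the leading potential term is genuinely that of $v_{ex}$.
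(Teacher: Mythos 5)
Your proposal is correct and follows essentially the same route as the paper, whose own proof is just a two-line pointer: $B_0$ and $B_1$ are read off from the expansion of $\psi_{\sigma_k,\om}$ using $\nu_1=v_{ex}$ (up to the normalizing constant) from Lemma \ref{lem:appro_vf} together with the normalization (\ref{eq:normalization_phi}) and the expansion (\ref{cor:Nk}) of $N_k$, and the ``moreover'' part is the standard variation formula for holomorphy potentials combined with $\theta_{ex,\om}=S(\om)-\uS$ at an extremal metric. You in fact supply more detail than the paper does, including the bookkeeping of the factor $\tfrac12$ and the observation that $\theta'_1=0$, both of which the paper leaves implicit.
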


\begin{proof}
The first statement follows because $\nu_{1}$ is equal to $v_{ex}$, see Lemma \ref{lem:appro_vf}.
The second statement follows from the computation of the differential of $B_1$ and is standard, see e.g. \cite[Lemma 5.2.9]{gbook}.
 \end{proof}

\begin{lemma}
\label{lem:sameexpansions}
Let $\om$ be any $T$-invariant metric. Then for any $v\in \Lie(T^c)$,
 \begin{equation}
  \label{eq:sameexpansions}
  \int_X \tilde{\theta}_{v,\om} (1+k^{-1}\Delta_{\om})e^{\psi_{\sigma_k,\om}}d\mu_{\om}= \int_X \tilde{\theta}_{v,\om} (1+k^{-1}\Delta_{\om})k^{-n}\rho_k(\om) d\mu_{\om}
 \end{equation}
 where $\tilde{\theta}_{v,\om}$ is the mean value zero holomorphy potential of $v$ with respect to $\om$.
\end{lemma}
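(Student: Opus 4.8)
The plan is to read (\ref{eq:sameexpansions}) as the statement that the difference of the two weights is $L^2(d\mu_\om)$-orthogonal to every function of the form $(1+k^{-1}\Delta_\om)\widetilde\theta_{v,\om}$. First I would note that the two weights carry the same total mass: the normalization (\ref{eq:normalization_phi}) gives $\int_X e^{\psi_{\sigma_k,\om}}\,d\mu_\om=N_k/k^n$, while $\int_X k^{-n}\rho_k(\om)\,d\mu_\om=k^{-n}\dim V_k=N_k/k^n$. Hence $D_k:=k^{-n}\rho_k(\om)-e^{\psi_{\sigma_k,\om}}$ has zero mean, and since $\Delta_\om$ is self-adjoint for $d\mu_\om$, the identity to prove is equivalent to $\int_X\big[(1+k^{-1}\Delta_\om)\widetilde\theta_{v,\om}\big]D_k\,d\mu_\om=0$ for all $v\in\Lie(T^c)$.

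The two structural tools I would use are both exact, not asymptotic. The first is the transformation law for holomorphy potentials: if $\om'=\om+i\del\delb f$ then, because $v$ is holomorphic, $i\iota_v\del\delb f=i\delb(v(f))$, so $\widetilde\theta_{v,\om'}=\widetilde\theta_{v,\om}+v(f)+\mathrm{const}$. The second is the K\"ahler divergence identity $\int_X v(F)\,d\mu_\om=\int_X F\,\Delta_\om\widetilde\theta_{v,\om}\,d\mu_\om$, valid for every $F$ since $\Delta_\om\widetilde\theta_{v,\om}=-\mathrm{div}_\om v$. I would first apply these to the Bergman side. Take $\bs=\{s_\alpha\}$ orthonormal for $\Hilb_k(\om)$ and let $\widehat v$ be the induced endomorphism of $V_k$; since $\rho_k$ maps into $\mathrm{SL}(V_k)$, $\widehat v$ is trace-free, so orthonormality gives the exact identity $\int_X\theta^{\mathrm{alg}}_{v,\bs}\,\rho_k(\om)\,d\mu_\om=\mathrm{Tr}(\widehat v)=0$, where $\theta^{\mathrm{alg}}_{v,\bs}$ is the Fubini--Study Hamiltonian. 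The embedding gives $\om_{\phi_\bs}=\om+i\del\delb f$ with $f=\tfrac1k\log(\rho_k(\om)/N_k)$, so the transformation law yields $\theta^{\mathrm{alg}}_{v,\bs}=\widetilde\theta_{v,\om}+v(f)+c$; substituting and using the divergence identity with $F=\rho_k(\om)$, the factor $\tfrac1k$ in $f$ reassembles the operator $1+k^{-1}\Delta_\om$ and I obtain $\int_X\big[(1+k^{-1}\Delta_\om)\widetilde\theta_{v,\om}\big]\,\rho_k(\om)\,d\mu_\om=-c\,N_k$.

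Next I would run the same computation on the $e^{\psi}$ side, replacing the two inputs by their twisted counterparts. The role of $\mathrm{Tr}(\widehat v)=0$ is now played by the vanishing of the optimal-weight character, $\cF^{\sigma_k}(v)=\langle\mu^{\sigma_k}(\bs),\widehat v\rangle=0$ from Proposition \ref{prop:futakimorita}(\ref{prop:fm3}), which is again an exact integral identity for $\theta^{\mathrm{alg}}_{v,\bs}$ but weighted by the twisted density attached to $\sigma_k$. The role of the Bergman shift $f$ is played by the relative potential $\psi_{\sigma_k}$ of $\sigma_k^*\om$ against $\om$, which likewise carries a factor $\tfrac1k$ (as exhibited in (\ref{eq:expansionpsi})). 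Feeding these through the same transformation-law and divergence manipulation should produce $\int_X\big[(1+k^{-1}\Delta_\om)\widetilde\theta_{v,\om}\big]\,e^{\psi_{\sigma_k,\om}}\,d\mu_\om=-c'\,N_k$ for a constant $c'$, so that the lemma reduces to the single scalar equality $c=c'$.

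The hard part will be precisely this identification $c=c'$, i.e. showing that the trace normalization of the Bergman side and the twisted normalization forced by $\cF^{\sigma_k}=0$ agree. This is where the defining property of the optimal weight (Definition \ref{def:optimal weight}) is indispensable: $\sigma_k$ is chosen exactly so that the holomorphy-potential component of $e^{\psi_{\sigma_k,\om}}$ coincides with that of $k^{-n}\rho_k(\om)$, which is what pins the two constants together. Two further points demand care. First, the character $\cF^{\sigma_k}$ and the moment map $\mu^{\sigma_k}$ are naturally defined at the Fubini--Study metric $\om_{\phi_\bs}$, which differs from the background $\om$ by $i\del\delb f=\cO(k^{-2})$; this discrepancy must be transported through the computation exactly rather than dropped, which is again handled by the holomorphy-potential transformation law. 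Second, the bookkeeping of the complex Laplacian (level one for $\om$ versus the $k^{-1}$ scaling generated by the $\tfrac1k$-shifts) must be tracked so that the operator $1+k^{-1}\Delta_\om$ emerges on both sides from the same mechanism, namely the divergence identity applied to a $\tfrac1k$-small potential change, and not from the $1+\Delta$ appearing in the definition of $\cF^{\sigma_k}$.
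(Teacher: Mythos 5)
Your overall strategy is the same as the paper's: evaluate the two sides of (\ref{eq:sameexpansions}) separately --- the Bergman side from the vanishing of the trace of the induced $\mathrm{SL}(V_k)$-action, the twisted side from $\cF^{\sigma_k}(v)=0$ --- and reduce the lemma to matching the two resulting constants against the common total mass $N_k/k^n$. Your hands-on derivation of the Bergman-side identity ($\mathrm{Tr}(\widehat{v})=0$ by orthonormality, the transformation law for holomorphy potentials, and the divergence identity reassembling $1+k^{-1}\Delta_\om$) is a legitimate rederivation of the equivariant Bergman-weight formula that the paper simply quotes from \cite{gbook}, and that part is sound.

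The gap sits exactly at the step you flag as ``the hard part,'' and the mechanism you propose for closing it is the wrong one. The defining property of the optimal weight, $\cF^{\sigma_k}(v)=0$, is already consumed in producing the twisted-side identity; it cannot also be what forces $c=c'$. What actually pins the constants together in the paper is: (i) in both computations the constant is the \emph{same} object, namely the basis-independent difference $c_k=\theta_{v,\bs}-\tilde{\theta}_{v,\om_{\phi_\bs}}$ between the Fubini--Study Hamiltonian and the mean-zero Hamiltonian (Futaki's lemma, as invoked in the proof of Proposition \ref{prop:futakimorita}); and (ii) the metric-independence of the normalized character $\widetilde{\cF}^{\sigma}(v)=-\int_X\tilde{\theta}_{v,\om}(1+\Delta_\om)e^{\psi_{\sigma,\om}}d\mu_\om$, established by the derivative computation (\ref{eq:derivativeoftildeF}), which is what transports the identity from the pulled-back Fubini--Study metric $\om_{\phi_\bs}$ (where $\cF^{\sigma_k}=0$ lives) to the arbitrary $T$-invariant metric $\om$ of the statement. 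Your plan to absorb the discrepancy $\om_{\phi_\bs}-\om=\cO(k^{-2})$ ``by the holomorphy-potential transformation law'' corrects only the Hamiltonian; it does not account for the simultaneous change of $\psi_{\sigma_k,\cdot}$, of the Laplacian, and of the volume form inside the twisted integral, which is precisely what the computation (\ref{eq:derivativeoftildeF}) (using $\dot{\psi}=(\nabla\psi,\nabla\dot{\phi})$) handles. Moreover, as you set them up, your constants $c$ and $c'$ each differ from $c_k$ by the mean value of a shifted Hamiltonian against \emph{different} densities, so their equality is not automatic; an exact argument of type (i)--(ii) is needed, and the proposal does not supply it.
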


\begin{proof}
Note that through this proof, the Laplacian considered is the complex Laplacian while in \cite{gbook} this is the $d$-Laplacian.
From the choice of the weights $\sigma_k$, we have
$$
\cF^{\sigma_k}(v)=0
$$
thus
$$
\int_X \theta_{v,\om} (1+k^{-1}\Delta_\om)e^{\psi_{\sigma_k,\om}}d\mu_\om=0
$$
for any $v\in \Lie(T^c)$ and any $T$-invariant $\om$ that is a pullback of the Fubini-Study metric.
We recall (from the proof of Proposition \ref{prop:futakimorita}) that there is a constant $c_k$ depending on $k$ such that
$\theta_{v,\om}=c_k+\tilde{\theta}_{v,\om}$, where $\tilde{\theta}_{v,\om} $ has mean value zero.
Then,
$$
 \int_X \tilde{\theta}_{v,\om} (1+k^{-1}\Delta_{\om})e^{\psi_{\sigma_k,\om}}d\mu_{\om}=-c_k\frac{N_k}{k^n}.
$$
Note that the above equation makes sense for any $T$-invariant metric (even non pulled-back metrics).
We now consider the action induced by $v$ on $V_k$
(see \cite[Proposition 8.6.1 page 200]{gbook}).
We obtain
$$
k^{-(n+1)}
w_{k}
=\int_X (1+k^{-1}\Delta_{\om})\theta_{v,\om}k^{-n}\rho_k(\om) d\mu_{\om}.
$$
As we lift the $v$ action into $SL(V_k)$, the weight vanishes and we have
\begin{equation}
 \label{eq:expansionA}
 \int_X (1+k^{-1}\Delta_{\om})\tilde{\theta}_{v,\om}k^{-n}\rho_k(\om) d\mu_{\om}=-c_k\frac{N_k}{k^n}
\end{equation}
for any $T$-invariant metric.
The result follows.
\end{proof}

\begin{proof}[Proof of Theorem \ref{theo:approxmetric}]
In the following, we only consider $T$-invariant functions. We will ommit the supscript $T$, but we shall keep
in mind that all the functions considered are supposed to be $T$-invariant. In particular, if $\L_g$ is the Lichnerowicz operator, we restrict to $\ker(\L_g)^T$,
that is to $T$-invariant Killing potentials. As $T$ is maximal, these potentials are exactly the Killing potentials of the elements of $\Lie(T)$.
The proof is then by induction on $q$.
 Write down the expansions
 $$
  k^{-n}\rho_k(\om_{ex}+i\del\delb\eta)=\sum_{j=0}^\infty A_j(\om_{ex}+\eta)k^{-j}
 $$
 and
 $$
 \exp(\psi_k(\om_{ex}+i\del\delb \eta))=\sum_{j=0}^\infty B_j(\om_{ex}+\eta)k^{-j}
 $$
 where we set
 $$
 \eta:=\sum_{l=1}^q \eta_l k^{-l}.
 $$
 We use the Taylor expansions of the coefficients $A_j$ and $B_j$ to obtain
 $$
 k^{-n}\rho_k(\om_{ex}+i\del\delb\eta)=\sum_{j=0}^\infty A_j(\om_{ex})k^{-j}+\sum_{j,l}A_{j,l}(\eta)k^{-j-l}
 $$
 and
 $$
 \exp(\psi_k(\om_{ex}+i\del\delb \eta))=\sum_{j=0}^\infty B_j(\om_{ex})k^{-j}+\sum_{j,l}B_{j,l}(\eta)k^{-j-l}
 $$
 where the $A_{j,l}(\eta)$ and $B_{j,l}(\eta)$ are polynomial expressions in the $\eta_l$ and their derivatives, depending on $\om_{ex}$.
 Assume that the $T$-invariant functions $(\eta_j)_{j\leq q-1}$ are chosen so that the above expansions agree till order $q$. We try to choose $\eta_q$
 so that the expansions agree till order $q+1$. The coefficients of order $k^{-(q+1)}$ in the two expansions are
 $$
 A_{q+1}(\om_{ex})+\sum_{I_{q+1}} A_{j,l}(\eta_1,\ldots,\eta_{q-1})+ \frac{1}{2}DS_{\om_{ex}}(\eta_q)
 $$
 and
 $$
 B_{q+1}(\om_{ex})+\sum_{I_{q+1}} B_{j,l}(\eta_1,\ldots,\eta_{q-1})+ \frac{1}{4}\nabla\eta_q\cdot\nabla S(\om_{ex})
 $$
 where we used the fact that $\om_{ex}$ is extremal together with Lemma \ref{lem:expansionpsi}. Here the sets of indices $I_{q+1}$
 are defined by the above expressions. Then the terms of order $q+1$ will agree if and only if we have
 \begin{equation}
  \label{eq:qthtermequality}
  \frac{1}{2}\L_{\om_{ex}}(\eta_q)=A_{q+1}(\om_{ex})-B_{q+1}(\om_{ex})+\sum_{I_{q+1}} (A_{j,l}-B_{j,l})(\eta_1,\ldots,\eta_{q-1})
 \end{equation}
 where $\L_g$ is the Lichnerowicz operator of any metric $g$.
The equation (\ref{eq:qthtermequality}) has a solution if and only if
\begin{equation}
 \label{eq:qthtermperp}
 A_{q+1}(\om_{ex})-B_{q+1}(\om_{ex})+\sum_{I_{q+1}} (A_{j,l}-B_{j,l})(\eta_1,\ldots,\eta_{q-1})\in \ker(\L_{\om_{ex}})^\perp.
\end{equation}
We cannot say much about (\ref{eq:qthtermperp}), but it only depends on $\eta_1,\ldots,\eta_{q-1}$
so we will add in the recursive process the asumption that at each step, (\ref{eq:qthtermperp}) is satisfied. Then equation (\ref{eq:qthtermequality})
can be solved recursively. Note that the initialization of the process requires
\begin{equation}
 \label{eq:initial}
 A_2-B_2(\om_{ex})\in \ker(\L_{\om_{ex}})^\perp.
\end{equation}
To simplify notations, set
$$
R_{q+2}(\eta_1,\cdots,\eta_q)= A_{q+2}(\om_{ex})-B_{q+2}(\om_{ex})+\sum_{I_{q+2}} (A_{j,l}-B_{j,l})(\eta_1,\ldots,\eta_{q}).
$$
It remains to show (\ref{eq:initial}) and that, when solving (\ref{eq:qthtermequality}), we can choose $\eta_q$
so that the following is true:
\begin{equation}
 \label{eq:q+1thtermperp}
 R_{q+2}(\eta_1,\cdots,\eta_q)\in \ker(\L_{\om_{ex}})^\perp.
\end{equation}
We now apply Lemma \ref{lem:sameexpansions} to $$\om_\eta:=\om_{ex}+i\del\delb \eta=\om_{ex}+i\del\delb\sum_{l=1}^q \eta_l k^{-l}.$$
Equation (\ref{eq:sameexpansions}) can be written
\begin{equation}
 \label{eq:sameexpansions2}
 \int_X \tilde{\theta}_{v,\om_\eta} (1+k^{-1}\Delta_{\om_\eta})(k^{-n}\rho_k(\om_\eta)-e^{\psi_{\sigma_k,\om_\eta}} )d\mu_{\om_\eta}=0.
\end{equation}
Then, by the induction hypothesis (choice of $\eta_1,\cdots,\eta_q$), we have the following expansion:
\begin{equation}
 \label{eq:inductionHYP}
 k^{-n}\rho_k(\om_\eta)-e^{\psi_{\sigma_k,\om_\eta}}=R_{q+2}(\eta_1,\cdots,\eta_q)k^{-(q+2)}+\cO(k^{-(q+3)})
\end{equation}
We also have:
$$
\om_\eta=\om_{ex}+\cO(k^{-1}).
$$
Thus we deduce with (\ref{eq:inductionHYP}) in equation (\ref{eq:sameexpansions2}),
that the term of order $k^{-(q+2)}$ in the expansion vanishes, that is
$$
\int_X \tilde{\theta}_{v,\om_{ex}}R_{q+2}(\eta_1,\cdots,\eta_q) d\mu_{\om_{ex}}=0.
$$
Note also that the above argument with $\eta=0$ gives (\ref{eq:initial}). The proof is complete.
\end{proof}

\subsection{Completion of Proof of Theorem \ref{thm:A}}
Once we have Proposition \ref{prop:controldmu} and Theorem \ref{theo:approxmetric}, the proof of Theorem \ref{thm:A} is almost identical to \cite{Don01}.
We give the outline of the proof.
Fix an arbitrary $R>1$.
Fix an integer $q$ determined later.
For the K\"ahler form $\omega_{q}(k)$ in Theorem \ref{theo:approxmetric}, we have
$$
		k^{-n} \rho_{k}(\omega_{q} (k))
	=
		 \exp(\psi_{k}(\omega_{q}(k)))(1 + \epsilon_{k})
$$
where $\epsilon_{k}= \cO(k^{-q-2})$.
Let
$$
	\omega'(k):= \omega_{q}(k) + i \partial\bar{\partial} \log \big(\exp(\psi_{k}(\omega_{q}(k)))(1+\epsilon_{k})\big)
	= \omega_{\bs_{0}}
$$
where $\bs_{0}$ is an orthonormal basis with respect to $\Hilb_{k}(\omega_{q}(k))$.
From Proposition 27 in \cite{Don01}, for large $k$, we find that there exists some (small) constant $c>0$ depending only on $R$
such that if $a \in \Lie(G_{k})$ satisfies
$
	\|a\|_{ \mathrm{op}}< c,
$
 then
\begin{enumerate}
\item $\bs_{0} \cdot e^{ia}$ is $R$-bounded, and
\item there exists $C_{1}$ such that
	$$
		 \|[\mu^{ \sigma_{k}}(\bs_{0}  \cdot e^{ia}) ]\|_{ \mathrm{op}}
		\le  C_{1}
		(\|a\|_{ \mathrm{op}}+\|\epsilon_{k}\|_{C^{2}, \omega_{ ex}}).
	$$
\end{enumerate}
In particular, we have
$$
	\|[\mu^{ \sigma_{k}}(\bs_{0})]\|_{ \mathrm{op}} \le C_{2} k^{-q-2}.
$$
Proposition \ref{prop:controldmu} implies that if $a$ satisfies
$$
	C_{1} (\|a\|_{ \mathrm{op}}+\|\epsilon_{k}\|_{C^{2}, \omega_{ ex}}) < \varepsilon
$$
where $\varepsilon$ is defined in Proposition \ref{prop:controldmu}, then
$$
	\Lambda_{\bs  \cdot e^{ia}} \le C_{3} k^{2}
$$
for some $C_{3}$.
Now, we will apply Proposition \ref{prop:Dmmap}
by putting
$
	Z:= Z^{T}(V_{k})
$
with $\omega_{Z}$ defined in Proposition \ref{prop:symp},
$
	G:= G_{k}
$
defined in (\ref{eq:Gk}) and $\mu:= \mu^{ \sigma_{k}}$.
Note that from $\Lie(G_{\bs_{0}})=\Lie(T_{k})$, the fact that $\cF^{\sigma_k}=0$ and Proposition \ref{prop:futakimorita}, item (\ref{prop:fm2}), we deduce
$$
	\mu^{ \sigma_{k}}(\bs_{0}) \in \Lie ((G_{k})_{\bs_{0}})^{\perp}.
$$
Let $\delta$ in Proposition \ref{prop:Dmmap} be
$$
	\min (c,\,\, \frac{\epsilon}{2C_{1}} )
$$
where  $c,\, C_{1}$ are as above. From Proposition \ref{prop:futakimorita} and $\Lie(G_{\bs_{0}})=\Lie(T_{k})$ again, we can assume that the inequality
$$
	\|[\mu^{ \sigma_{k}}(\bs_{0}  ) ]\|_{ \mathrm{ op}} \le C_{2} k^{ -q -2}
$$
still holds.
Putting $\lambda:= C_{3} k^{2}$,
$$
	\lambda \vert \vert [\mu^{ \sigma_{k}}(\bs_{0}  ) ] \vert \vert
	\le
	\sqrt{N_{k}} \lambda\|[\mu^{ \sigma_{k}}(\bs_{0}  ) ]\|_{ \mathrm{ op}}
	 < C_{2}C_{3} k^{ n/2-q }.
$$
Taking $q$ so that $n/2-q <0 $, for large $k$, we have
$$
	\lambda \vert \vert [\mu^{ \sigma_{k}}(\bs_{0}  ) ] \vert \vert < \delta.
$$
Proposition \ref{prop:Dmmap} implies that there exists $a \in \Lie ((G_{k})_{\bs_{0}})^{\perp}$ such that
$$
	\mu^{ \sigma_{k}}(\bs_{0} \cdot e^{ia})=0, \,\, \|a\| \le C_{2}C_{3} k^{ n/2-q},
$$
i.e., $\bs_{0} \cdot e^{ia}$ is $\sigma_{k}$-balanced point we desired.
By construction, considering the behavior of $C^{r}$-norm by scaling $\omega \mapsto k \omega$,
$$
	\|\omega_{\phi_{\bs_{0} \cdot e^{ia}}} - \omega_{ex}\|_{C^{r}}
	= \cO(k^{ n/2-q+r}).
$$
For any $r \ge 0$, by replacing $q$ so that $ n/2-q+r<0$, we proved that $\sigma_{k}$-balanced metrics $\omega_{\phi_{\bs_{0} \cdot e^{ia}}}$ converge to $\omega_{ex}$ in $C^{r}$-sense.
The proof of Theorem \ref{thm:A} is completed.

\subsection{Proofs of Corollaries \ref{cor:b} and \ref{cor:c}}
We sketch the proofs of Corollaries \ref{cor:b} and \ref{cor:c}, that follow from the arguments in \cite{Don01} and \cite{ah} respectively.
Let $\om$ be an extremal metric on $(X,L)$. By Theorem \ref{thm:A}, $\om$ is a limit of $\sigma_k$-balanced metrics.\\

The proof of Corollary \ref{cor:b} is as in \cite{Don01}. A $\sigma_k$-balanced metric corresponds to a zero of the moment map $\mu^{\sigma_k}$.
From general theory of moment maps, such a zero is unique, up to the $G_k$-action, in its $G_k^c$ orbit.
This fact can also be seen directly from Lemma \ref{lem:gZ}.
Assume that there exists two $\sigma_k$-balanced metrics.
Connecting them by a geodesic path $\exp(t\xi)$ in $\cB^T_{k}$, then the second derivation of $I^\sigma$ (i.e., $g_{\cB_k}^{\sigma_k}(\bs)(\xi,\xi)$) along it must be zero.
This induces that $\pi_N \hat{\xi}$ is trivial.
Hence $\exp(t\xi)$ must preserve $X$ in $\mathbb{CP}^{N_k}$, i.e., $\xi \in \Lie(\Aut_k^T)$.
This proves uniqueness of $\sigma_k$-balanced metrics.
Then the result follows at the limit.\\

The proof  of Corollary \ref{cor:c} follows the strategy from \cite{ah}.
Each $\sigma_k$-balanced metric is a product of $\sigma_k$-balanced metrics on each
factor of $(X,L^k)$. To prove the splitting for $\sigma$-balanced metrics, we use
the corresponding notion of GIT. The existence of a $\sigma$-balanced metric corresponds to the vanishing of a finite dimensional moment map,
and to a GIT stability condition. Then we use the general fact that stability for a product implies stability for each factor.
Indeed, by Hilbert-Mumford criterion, one has to check stability with respect to one parameter subgroups.
But the set of one-parameter subgroups considered for the product contains the one parameter subgroups considered for each factor.
We deduce from this that $(X_i,L_i^{\otimes k})$ admits a $\sigma_k$-balanced metric for large $k$, and by unicity, the product of these metrics
is our initial $\sigma_k$-balanced metric. Then the result follows at the limit.

\end{document}